\crefname{Theorem}{Theorem}{Theorems}
\crefname{Lemma}{Lemma}{Lemmas}
\crefname{Proposition}{Proposition}{Propositions}
\crefname{Corollary}{Corollary}{Corollaries}
\crefname{Definition}{Definition}{Definitions}
\crefname{Assumption}{Assumption}{Assumptions}
\crefname{Remark}{Remark}{Remarks}
\crefname{Example}{Example}{Examples}
\newtheorem{Definition}{Definition}
\newtheorem{Theorem}{Theorem}
\newtheorem{Lemma}{Lemma}
\newtheorem{Corollary}{Corollary}
\newtheorem{Example}{Example}
\newcommand{\A}{\mathbb{P}} 
\newcommand{\Q}{\mathbb{A}}
\newcommand{\Qu}{\mathbb{A}_u}
\newcommand{\Ql}{\mathbb{A}_l}
\newcommand{\D}[1]{\| #1 \|_{rng}}
\numberwithin{equation}{section} 
\author{Mingxi Li}
\thanks{Beihang University, Beijing, China. Email: \texttt{mingxi\_li@buaa.edu.cn}}
\author{Hanzhou Wang}
\thanks{Beihang University, Beijing, China. Email: \texttt{wanghanzhou@buaa.edu.cn}}
\author{Dongyu Li}
\thanks{Beihang University, Beijing, China; National University of Singapore, Singapore. 
  Corresponding author. Email: \texttt{dongyuli@buaa.edu.cn}}
\title[Signum Consensus Protocol on Arbitrary Weighted Directed Graphs]{From Control to Opinion Dynamics: Signum Consensus Protocol on Arbitrary Weighted Directed Graphs}
\begin{document}

\begin{abstract}
We study the signum consensus protocol in continuous-time systems over arbitrary weighted directed graphs with bounded disturbances. The right-hand side of the differential equation is discontinuous on a codimension-$n$ manifold ($n > 1$). On such a manifold, the Filippov sliding vector is not uniquely determined. This results in non-unique solutions and makes the analysis of the system challenging. We define the Polarization Index as the supremum of the growth rate of the difference between the maximum and minimum agent states in the system, derive its closed-form expression, and show that some solution attains this supremum at all forward times except during consensus. From this result, we derive necessary and sufficient conditions for consensus and provide a least upper bound on consensus time. To address the high computational complexity of evaluating the Polarization Index, we propose a low–average–complexity algorithm. Finally, we develop an opinion dynamics model grounded in the signum consensus protocol, revealing a fundamental link between dissensus and community structures.
\end{abstract}

\keywords{Decentralized consensus, Filippov solution, Opinion dynamics, Community detection}

\subjclass[2020]{93C10, 34A60, 93A14, 93D50, 93D40, 90C11, 05C82, 91D30}

\maketitle

\section{Introduction}
In multi-agent systems (MASs), the consensus problem aims to drive all agents' states to gradually approach or become equal through local sensing or communication. Let each agent $i$ has a scalar state $x_i: \mathbb{R} \to \mathbb{R}$. The classic linear consensus protocol in \Cref{e:i:1} drives each agent toward the weighted average of its neighbors' states. The signum consensus protocol in \Cref{e:i:2} drives each agent toward the side with a higher weighted number of neighbors.
\begin{equation}\label{e:i:1}
    \dot{x}_i(t) = \sum_{j\in\mathcal{V}}w_{ji}\big(x_j(t)-x_i(t)\big), \qquad i\in \mathcal{V},
\end{equation}
\begin{equation}\label{e:i:2}
    \dot{x}_i(t) = \sum_{j\in\mathcal{V}}w_{ji}\operatorname{sign}\big(x_j(t)-x_i(t)\big), \qquad i\in \mathcal{V},
\end{equation}
where $\mathcal{V}$ is the agent set, $w_{ji}\in \mathbb{R}_{\ge 0}$ denotes the influence strength from agent $j$ to $i$.

The only difference between \Cref{e:i:1} and \Cref{e:i:2} is the $\operatorname{sign}(\cdot)$ function but this difference brings many advantages such as robustness to disturbances, 
resistance to attacks 
\cite{santilli2021dynamic,franceschelli2016finite}, low communication and sensing requirements \cite{chen2011finite}, and finite-time consensus \cite{chen2011finite,liu2015finite}. 

However, the right-hand side of \Cref{e:i:2} is discontinuous on a codimension-$n$ manifold ($n > 1$), the Filippov sliding vector on it cannot be uniquely determined \cite{dieci2011sliding, jeffrey2014dynamics}, which leads to non-unique solutions and makes the analysis of the system challenging. Existing studies on \Cref{e:i:2} and related protocols 
\cite{chen2011finite,franceschelli2016finite,liu2015finite,santilli2021dynamic} typically assume undirected or detail-balanced communication graphs, which lead to the uniqueness of the solution and thereby simplify the analysis. No consensus analysis has been done for \Cref{e:i:2} on arbitrary weighted directed graphs.

Nevertheless, analyzing \Cref{e:i:2} on arbitrary weighted directed graphs is necessary. MAS may face attacks, interference, or device failures that disrupt the communication graph structure. For example, communication failures may prevent some agents from sending messages while still receiving them, turning an undirected graph into a directed one. Actuator errors can also reduce controller efficiency and break the detailed balance condition. The analysis of \Cref{e:i:2} on arbitrary weighted directed graphs provides a theoretical foundation for network redundancy design and fault analysis. Moreover, directional communication is essential in practice, as some scenarios can only be modeled using directed graphs, such as broadcasting.

Its intriguing properties on arbitrary weighted directed graphs also suggest that studying \Cref{e:i:2} in such settings is worthwhile. Our study of \Cref{e:i:2} unexpectedly led to indices related to community structure \cite{fortunato2016community}, where communities are groups of nodes more densely connected internally than externally. Identifying communities reveals network organization and highlights regions with partial autonomy. We also found that \Cref{e:i:2} can serve as a model for opinion dynamics, where many of its properties become naturally interpretable. Individuals’ opinions represent their cognitive orientations toward certain objects (e.g., specific issues, events, or other individuals), for example, displayed attitudes or subjective certainties of belief.
Opinion dynamics models how individual opinions evolve through social interactions in a networked population 
\cite{proskurnikov2017tutorial}. While natural and engineered networks often exhibit spontaneous order, social communities display more complex, irregular dynamics. Opinion dynamics research thus calls for theories that can explain the emergence of both agreement (i.e., consensus) and disagreement (i.e., dissensus) \cite{abelson1967mathematical,horowitz1962consensus,proskurnikov2017tutorial}, a challenge known as the community cleavage problem or Abelson’s problem \cite{friedkin2015problem}. Compared to models that require additional assumptions, such as the presence of stubborn agents 
\cite{tian2021social}, quantized state/behavior \cite{ceragioli2011discontinuities,ceragioli2018consensus,ceragioli2025disruptive}, or bounded confidence between individuals \cite{rainer2002opinion,blondel2010continuous,brooks2024emergence,motsch2014heterophilious,jabin2014clustering}, \Cref{e:i:2}, as one of the simplest discontinuous protocols, captures both agreement and disagreement without relying on assumptions specifically designed for opinion dynamics. 
The main assumption, modeled by the $\operatorname{sign}(\cdot)$ function in \Cref{e:i:2}, is that individuals can only compare whether others hold more radical or more moderate opinions on an issue, without knowing the exact opinions of others. This is reasonable, as individuals may avoid fully expressing their views to prevent conflict or being seen as outliers. This setting resembles cases in which the observable outcome is restricted to a binary response (``approve'' or ``disapprove''), providing little indication of the underlying strength or intensity of the opinion. We believe that \Cref{e:i:2} satisfies the criteria outlined in \cite{proskurnikov2017tutorial} for being sufficiently ``rich'' to capture the behavior of social actors, while also being ``simple'' enough to be rigorously analyzed.

This paper analyzes the consensus behavior of all possible solutions of the signum consensus protocol on arbitrary weighted directed graphs in the presence of disturbances. Due to the discontinuity of the right-hand side of the differential equation, classical solutions may not exist. We adopt the definition of solution proposed in \cite[p. 54]{filippov2013differential} (termed parametric combinations in \cite{jeffrey2014dynamics}) to characterize mutually independent evaluations of each sign function and and disturbances at points of discontinuity, which encompasses situations in which different sign functions are implemented by different relays, whose manufacturing imperfections may cause inconsistencies in their outputs near zero. To resolve the ambiguity caused by solution non-uniqueness, we distinguish between two notions: Strong Consensus, where every solution achieves consensus for each initial condition; and Weak Consensus, where at least one consensus solution exists for each initial condition. We describe our contribution as twofold:

As the \textit{first} contribution, we present the necessary and sufficient conditions for the system to achieve Strong Consensus and provide a least upper bound on consensus time. We define the Polarization Index $\A$ as the supremum of the growth rate of the difference between the maximum and minimum agent states in the system, derive its closed-form expression, and show that some solution attains this supremum at all forward times except during consensus. Therefore, $\A < 0$ is both necessary and sufficient for the system to achieve Strong Consensus. A least upper bound on the consensus time is also established. We reformulate the computation of $\A$ as a mixed-integer programming program in certain cases, reducing average-case complexity.

As the \textit{second} contribution, we develop an opinion dynamics model grounded in the signum consensus protocol that uncovers the fundamental link between dissensus and community structures. We show that the absence of Strong Communities is a sufficient condition for Strong Consensus, while the absence of Satisfactory Partitions \cite{bazgan2006satisfactory} is a necessary condition. Moreover, we find that the Autonomy Index $\Q(\cdot)$ proposed in this paper serves as a measure of community strength, while the Polarization Index $\A$ quantifies the quality of the best partition within a graph.

This paper is organized as follows:
\Cref{s:Dynamic Modeling} defines the studied system.
\Cref{s:Existence of Solutions} introduces the adopted solution concept, with \Cref{t:m:1} proving the existence of solutions.
\Cref{s:Nonuniqueness of Solutions} discusses solution non-uniqueness.
\Cref{s:Consensus Analysis} analyzes consensus, with \Cref{t:m:3} providing properties related to the difference between the maximum and minimum agent states in the system and \Cref{t:m:4} giving necessary and sufficient conditions for strong consensus and a least upper bound on consensus time.
\Cref{s:Computation of A} presents a lower–average–complexity algorithm for computing $\A$, where \Cref{t:m:5} reformulates it as a max–min integer program and \Cref{t:m:8} reduces it to a mixed-integer program under certain conditions.
\Cref{s:Modeling of Opinion Dynamics and Interpretation of Notation} develops an opinion dynamics model, with \Cref{t:sufficient condition} and \Cref{t:necessary condition} linking strong communities and satisfactory partitions to dissensus. \Cref{s:Illustrative Examples} gives some simulation results. \Cref{s:Conclusion} gives a brief conclusion.

\subsection*{\textbf{Notation}}

Let $\mathbb{R}$, $\mathbb{R}_{\ge 0}$, and $\overline{\mathbb{R}} := \{-\infty,\infty\} \cup \mathbb{R}$  denote the set of real numbers, the set of nonnegative real numbers, and the set of extended real numbers, respectively. The bold symbols $\pmb{0}$ and $\pmb{1}$ denote vectors of appropriate dimensions. For two vectors $a, b \in \mathbb{R}^d$, $a \succeq b$ (equivalently, $b \preceq a$) implies that $a_i \geq b_i$ for all $i \in \{1, \dots, d\}$. 
The set difference of two sets $A$ and $B$ is denoted $A\backslash B$. The notation $\operatorname{card}(S)$ and $\mathcal{P}(S)$ represent the cardinality and the power set of $S$, respectively. $\operatorname{conv}(\cdot)$ denotes convex hull. $\sup(\cdot)$ represents the supremum and we formally write $\sup \emptyset = -\infty$. $|\cdot|: \mathbb{R} \to \mathbb{R}_{\ge 0}$ denotes the absolute value function. $\operatorname{span}(v)$ denotes the subspace generated by $v\in \mathbb{R}^{k}$. The identity matrix is denoted by $E$. 

If $I$ is an index set and $a \in I$ (or $A \subset I$), the notation $(\cdot)_a$ (or $(\cdot)_A$) refers to the value (or values) corresponding to the dimension indexed by $a$ (or $A$). For example, if $v\in \mathbb{R}^3$, $S\subset \mathbb{R}^3$, $f:\mathbb{R}\to \mathbb{R}^3$, and $A = \{1,3\}$ then $v_1$ denote the first component of $v$, $v_A = [v_1,v_3]^T$, $S_1 = \{v_1\mid v\in S\}$, and $f_1:\mathbb{R}\to \mathbb{R}^3,x\mapsto (f(x))_1$. The originally subscripted notation is represented using superscripts, such as $t^0$ denoting a specific moment, typically the initial time. It should be noted that, if otherwise specified, the subscript may not adhere to the aforementioned meaning.

\section{Dynamic Modeling}\label{s:Dynamic Modeling}
Consider a network of \( n \) agents (\( n \geq 2 \)) whose communication graph is modeled by a weighted directed graph \( \mathcal{G} = (\mathcal{V}, \mathcal{E}, \mathcal{W}) \), with agent set \( \mathcal{V} = \{1, \dots, n\} \), edge set \( \mathcal{E} \subset \mathcal{V} \times \mathcal{V} \), and weight matrix \( \mathcal{W} = [w_{ij}] \in \mathbb{R}_{\geq 0}^{n \times n} \). Each edge $(j, i) \in \mathcal{E}$ means agent $i$ can compare its state with agent $j$'s, even without knowing their exact values. $w_{ji} > 0$ if and only if $(j, i) \in \mathcal{E}$. The diagonal element $w_{ii}$ (self-loop) carries no useful information; We use it to represent the upper bound of the disturbance term acting on agent $i$.

Each agent $i$ holds a scalar state $x_i(t) \in \mathbb{R}$ with continuous-time dynamics:
\begin{equation}\label{e:c:3}
    \dot{x}_i(t) = d_i(t,x(t)) + \sum_{j\in \mathcal{V}} w_{ji} \operatorname{sign}(x_j(t)-x_i(t))\quad \forall i\in \mathcal{V},
\end{equation}
where $d: \mathbb{R} \times \mathbb{R}^n \to \mathbb{R}^n$ models external disturbances and actuator errors, bounded componentwise by
\begin{equation}\label{d satisfies}
    -w_{ii} \le d_i(t,x) \le w_{ii} \quad \forall (t,x) \in \mathbb{R}^{n+1},\ i \in \mathcal{V}.
\end{equation}

\section{Existence of Solutions}\label{s:Existence of Solutions}

For $\dot{x} = f(t, x)$ with discontinuous $f:\mathbb{R} \times \mathbb{R}^n \to \mathbb{R}^n$, classical solutions may not exist, requiring a generalized notion of solution. One approach is to define an absolutely continuous function $x:\mathbb{R}\to\mathbb{R}^n$ satisfying differential inclusion
\begin{equation}
    \dot{x}(t) \in \mathcal{F}(t, x(t))
\end{equation}
as a solution to the original equation, where $\mathcal{F}(t,x) = \{f(t,x)\}$ when $f$ is continuous, and otherwise follows definitions like the simplest convex definition \cite{filippov2013differential}, the equivalent control definition \cite{utkin2003variable}, or the general definition from \cite{aizerman1974fundamentals}. 

There is no universally ``correct'' definition; rather, different definitions are appropriate in different contexts. As Filippov noted in \cite[p. 1]{filippov2013differential}, when $f$ is discontinuous in $x$, simple mathematical arguments often fail, and solutions are defined via a limiting process that reflects the system’s physical meaning. For example, static friction at zero velocity cannot be inferred from dynamic friction at velocities near zero and must be measured directly \cite[p. 53]{filippov2013differential}.

Since $\operatorname{sign}(x_j - x_i)$ and $\operatorname{sign}(x_i - x_j)$ are computed independently by agents $i$ and $j$, we consider the possibility that, due to implementation errors, they may have the same sign when $|x_i - x_j|$ is very small. In opinion dynamics, such discrepancies can also arise from perception errors, where both individuals may believe the other holds a more extreme opinion. Whether such errors are considered can greatly affect the value of $\mathcal{F}(t,x)$ at discontinuity points (see \cite[p. 53]{filippov2013differential} for example).

Therefore, we adopt the definition of solution proposed in \cite[p. 54]{filippov2013differential} (termed parametric combinations in \cite{jeffrey2014dynamics}). Consider that the values of different sign functions $\operatorname{sign}(\cdot)$ and disturbances $d_i(\cdot)$ are mutually independent near discontinuity points. Rewrite \Cref{e:c:3} as
\begin{equation}\label{e:c:3.1}
    \dot{x}(t) = f(d_1(t,x(t)),\dots,d_n(t,x(t)),u_{11}(x(t)),u_{12}(x(t)),\dots,u_{nn}(x(t))),
\end{equation}
where $f$ is the stack of $f_i(d_1,\dots,d_n,u_{11},\dots,u_{nn}) := d_i+ \sum_{j\in \mathcal{V}} u_{ji}$ and $u_{ji}(x) := w_{ji}\operatorname{sign}(x_j-x_i)$.  
Define the set-valued functions $\mathcal{D}_i:\mathbb{R}\times\mathbb{R}^n\to  \mathcal{P}(\mathbb{R})$, $\mathcal{U}_{ij}:\mathbb{R}^n\to  \mathcal{P}(\mathbb{R})$, and $\mathcal{F}:\mathbb{R}\times\mathbb{R}^n\to\mathcal{P}(\mathbb{R}^n)$ as
\begin{equation}\label{e:s:2}
    \begin{aligned}
        &\begin{aligned}
        \mathcal{D}_i(t,x) := \operatorname{conv}\{&\lim_{k\to \infty} d_i(t^k,x^k) \mid (t^k,x^k)\to  (t,x), k = 1,2,\dots \},
        \end{aligned}
        \\
        &\mathcal{U}_{ij}(x) := \operatorname{conv}\{\lim_{k\to \infty} u_{ij}(x^k) \mid x^k\to  x, k = 1,2,\dots \},\\
        &\mathcal{F}(t,x) := \operatorname{conv}f(\mathcal{D}_1(t,x),\dots,\mathcal{D}_n(t,x),\mathcal{U}_{11}(x),\mathcal{U}_{12}(x),\dots,\mathcal{U}_{nn}(x)).
    \end{aligned}
\end{equation}

Then the Filippov Solution of \Cref{e:c:3} is defined as below.
\begin{Definition}[Filippov Solution \cite{filippov2013differential}]
    $x:[t^0,t^1]\rightarrow \mathbb{R}^n$ is a Filippov Solution of \Cref{e:c:3} if $x(\cdot)$ is absolutely continuous, $\dot{x}(t)\in \mathcal{F}(t,x(t))$ a.e. on $[t^0,t^1]$, and $x(t^0) = x^0$ where $(t^0,x^0)$ is the initial condition.
\end{Definition}

\begin{Theorem}\label{t:m:1}
    For any $\mathcal{W}\in \mathbb{R}_{\ge 0}^{n\times n}$, $d(\cdot,\cdot)$ satisfying \Cref{d satisfies}, and initial condition $(t^0,x^0)\in \mathbb{R}^{n+1}$, there exists a solution $x:\mathbb{R}\to \mathbb{R}^n$ to \Cref{e:c:3} with $x(t^0) = x^0$.
\end{Theorem}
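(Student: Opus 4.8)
The plan is to establish existence of a Filippov solution by verifying that the set-valued map $\mathcal{F}(t,x)$ defined in \Cref{e:s:2} satisfies the hypotheses of a standard existence theorem for differential inclusions (see \cite[p. 77, Theorem 1]{filippov2013differential}), namely that $\mathcal{F}$ is upper semicontinuous with nonempty, compact, convex values, and then extend the local solution to all of $\mathbb{R}$ using a growth bound. The starting point is that each $u_{ij}(x) = w_{ji}\operatorname{sign}(x_j - x_i)$ is a bounded, piecewise-constant function, so each $\mathcal{U}_{ij}(x)$ is a bounded interval (either a singleton $\{\pm w_{ji}\}$ off the discontinuity set or the full interval $[-w_{ji}, w_{ji}]$ on it), and each $\mathcal{D}_i(t,x) \subseteq [-w_{ii}, w_{ii}]$ by \Cref{d satisfies}. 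Since $f_i = d_i + \sum_j u_{ji}$ is affine (indeed additive) in these arguments, the convex hull of the image of a product of compact convex sets under $f$ is again nonempty, compact, and convex; this handles the value-structure requirement directly.

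The second step is upper semicontinuity of $\mathcal{F}$. Here I would argue that each $\mathcal{U}_{ij}$ and $\mathcal{D}_i$ is upper semicontinuous by construction: they are defined precisely as convex hulls of limit points of the underlying functions, which is the canonical recipe that yields a closed graph (hence, together with local boundedness, upper semicontinuity). The key structural point specific to this paper is that $\mathcal{F}$ is built as $\operatorname{conv} f$ applied to the \emph{product} $\mathcal{D}_1 \times \cdots \times \mathcal{U}_{nn}$ of the individual set-valued maps, rather than the usual Filippov convexification of the whole vector field at once. I would verify that taking the image of a product of upper semicontinuous, locally bounded, compact-convex-valued maps under a continuous (affine) function, followed by convexification, preserves upper semicontinuity — this is where one must be slightly careful, since the parametric-combination construction is coarser than the standard convex definition, and I would want to confirm the closed-graph property survives the product-then-convex-hull operation.

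The third step is the global extension. A local solution exists on some $[t^0, t^0 + \varepsilon]$ by the existence theorem; to extend it to all of $\mathbb{R}$ (both forward and backward, as the statement asserts $x:\mathbb{R}\to\mathbb{R}^n$), I would use the linear growth bound $\|\mathcal{F}(t,x)\| \le \sum_{i,j} w_{ij}$, which is a uniform constant independent of $(t,x)$. Because the right-hand side is uniformly bounded, no finite-time blow-up can occur, and the standard continuation argument lets us extend the solution indefinitely; backward existence follows symmetrically by considering the time-reversed inclusion, whose set-valued map $-\mathcal{F}(-t,x)$ inherits the same regularity.

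I expect the main obstacle to be the upper semicontinuity verification in the second step, specifically confirming that the parametric-combination map $\mathcal{F}$ — which convexifies each component's discontinuity independently and only afterward forms the vector field — retains the closed-graph property needed to invoke the existence theorem. The subtlety is that this construction differs from Filippov's simplest convex definition, so one cannot merely cite upper semicontinuity of the standard convexified field; instead one must check that the product structure $\mathcal{D}_1(t,x) \times \cdots \times \mathcal{U}_{nn}(x)$ is upper semicontinuous as a map into $\mathcal{P}(\mathbb{R}^{n^2+n})$ and that composing with the continuous map $f$ and then taking $\operatorname{conv}$ does not destroy this. Everything else — nonemptiness, compactness, convexity, and the global continuation via the uniform bound — should follow routinely once the semicontinuity is secured.
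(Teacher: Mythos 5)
Your proposal is correct and follows essentially the same route as the paper: establish that each $\mathcal{D}_i$ and $\mathcal{U}_{ij}$ is upper semicontinuous with nonempty, compact, convex values, show these properties survive the product--image--convex-hull construction of $\mathcal{F}$ (the paper handles this by citing Filippov's Lemma 16, \S5 and Lemma 2, \S6), invoke Filippov's existence theorem for the local solution, and extend globally via the uniform bound on $\mathcal{F}$. The step you flag as the main obstacle --- upper semicontinuity of the parametric-combination map --- is exactly the step the paper dispatches with those cited lemmas, so your instinct about where the real work lies matches the paper's proof.
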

\begin{proof}
    See \Cref{Proof of t:m:1}
\end{proof}

\section{Non-uniqueness of Solutions}\label{s:Nonuniqueness of Solutions}
We provide examples to illustrate the non-uniqueness of solutions, analyze its causes, and discuss its impact on the system.

A solution $x(\cdot)$ of \Cref{e:c:3} is called a consensus solution \cite{wang2010finite} if
\begin{equation}
    \lim_{t \to \infty} |x_i(t) - x_j(t)| = 0 \quad \forall\, i,j \in \mathcal{V}.
\end{equation}

\begin{Example}\label{Example 2}
Let $n = 3$, $\mathcal{W} = \begin{bmatrix}
    0\ 2\ 0;
    3\ 0\ 1; 
    1\ 0\ 0
\end{bmatrix}$, and $d(t,x)\equiv \pmb{0}$. 

Define $x,x':\mathbb{R}_{\ge0}\to \mathbb{R}^3$,
\begin{equation}
\begin{aligned}
    x(t) &= \begin{cases}
        [4t,1-2t,2-t]^T & 0\le t < 1/6,\\
        [3/5+2/5t,3/5+2/5t,2-t]^T & 1/6 \le t < 1,\\
        [1,1,1]^T & 1 \le t,
    \end{cases}\\
    x'(t) &= \begin{cases}
        [4t,1-2t,2-t]^T & 0\le t < 1/6,\\
        [1-2t,1-2t,2-t]^T & 1/6 \le t.
    \end{cases}
\end{aligned}
\end{equation}

The graph structure is shown in \Cref{f:s:3a}, and the $x-t$ diagrams of two Filippov solutions $x(\cdot)$ (left) and $x'(\cdot)$ (right) with the same initial condition are shown in \Cref{f:s:3b} (see \Cref{Proof of Example 2} for proof). Here, $x(\cdot)$ reaches consensus, while $x'(\cdot)$ does not.

\begin{figure}[htbp]
	\centering  
	\subfigure[Graph structure]{
		\raisebox{0.2\height}{\label{f:s:3a}\includegraphics[width=0.27\linewidth]{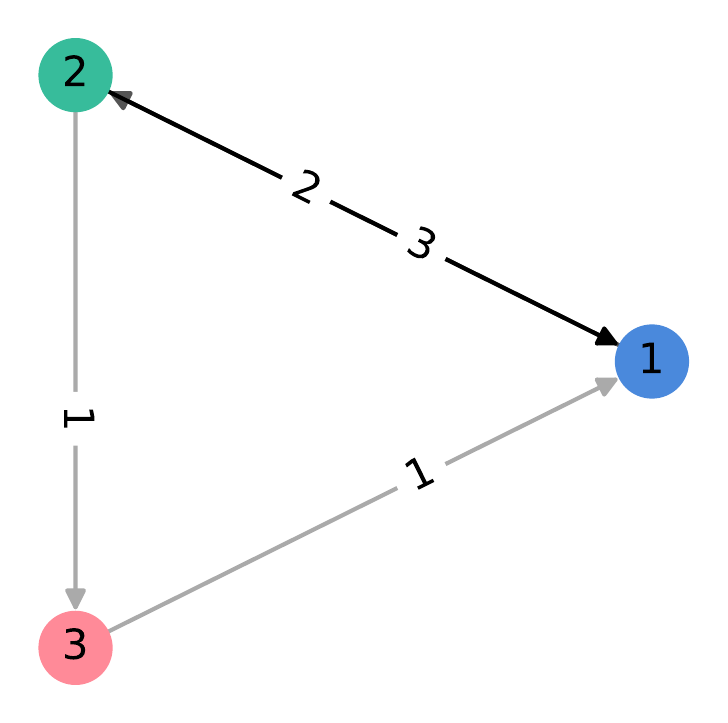}}}
    \subfigure[$x-t$ diagram]{
		\label{f:s:3b}\includegraphics[width=0.66\linewidth]{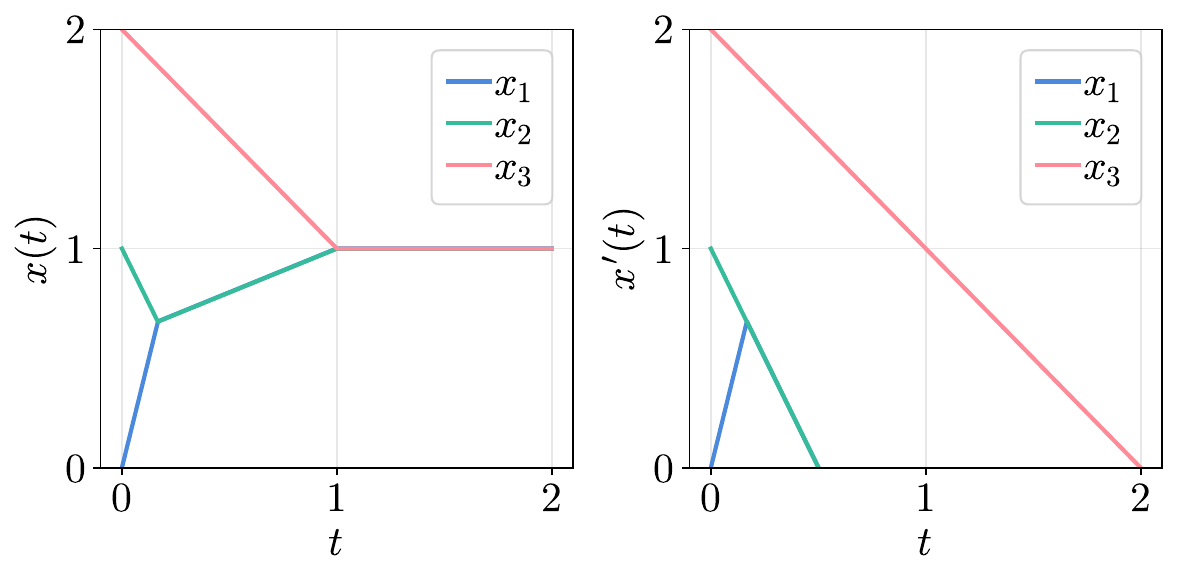}}
    \caption{Graph structure and $x-t$ diagrams of two solution of \Cref{e:c:3} with the same initial condition $x(0) = [0,1,2]^T$}
\end{figure}
\end{Example}

The above example directly provides non-unique solutions, which we now illustrate through simulation. Since simulations yield only a single solution, the non-uniqueness manifests as sensitivity of the system's evolution to initial conditions.

\begin{Example}\label{Example 1}
    Let $n = 6$, with a strongly connected graph $\mathcal{W}$ illustrated in \Cref{Example 1 G} and $d(t,x)\equiv \pmb{0}$. The simulation results are shown in \Cref{fig Example 1} (see caption for details).
    \begin{figure}[htbp]
    	\centering  
    	\subfigure[Graph structure]{\raisebox{0.07\height}{\label{Example 1 G}\includegraphics[width=0.3\linewidth]{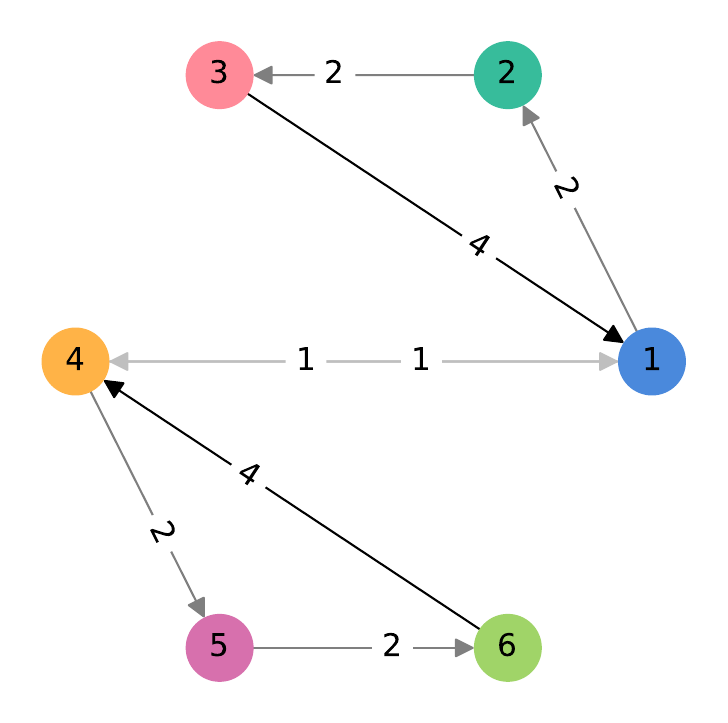}}}
    	\subfigure[{Simulation with $x(0) = [0, 1, 2, 4, 3, 2]^T$. Left: full trajectories; right: zoom-in on region R1, marked in the left plot.}]{\label{Example 1 xt 1}\includegraphics[width=0.305\linewidth]{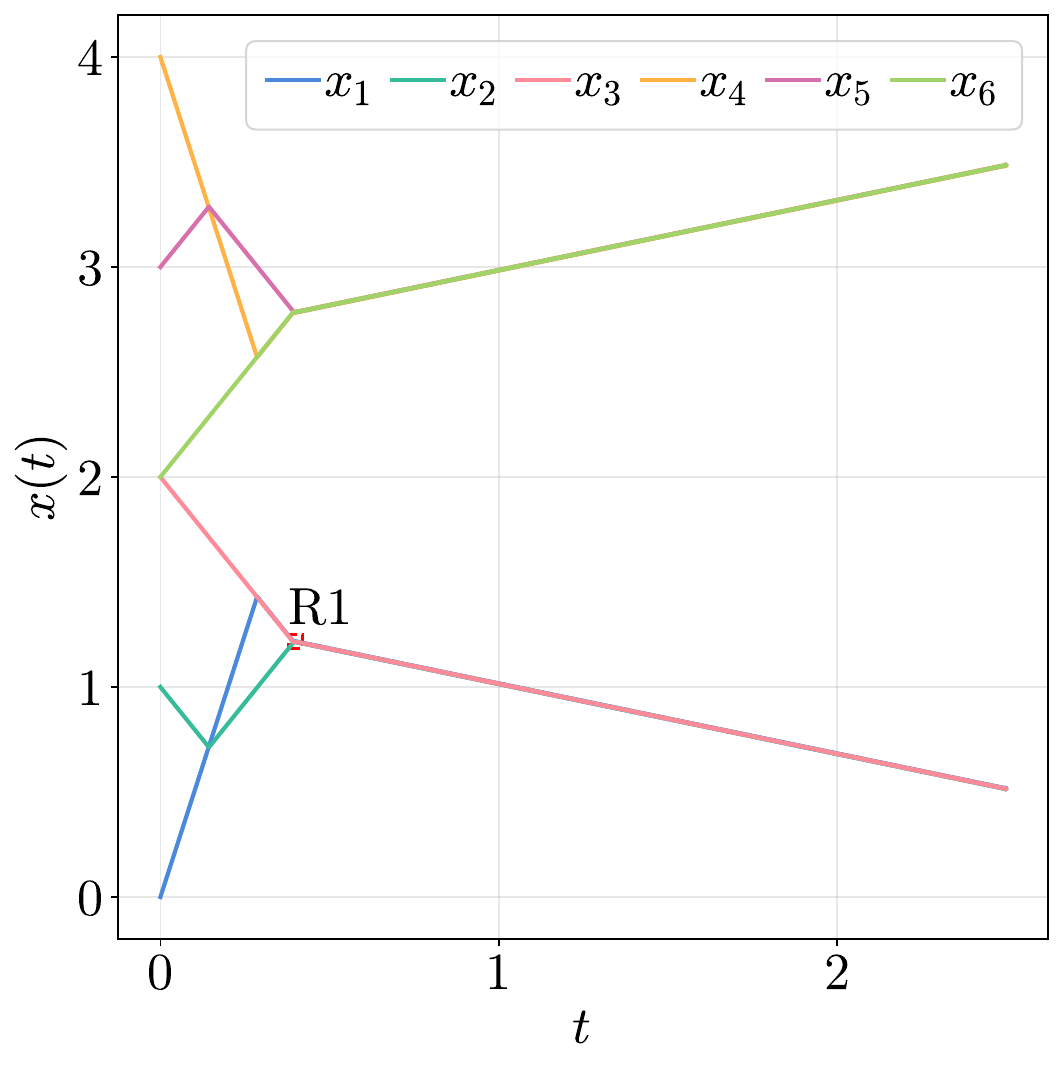}\hspace{0.01\linewidth}
        \includegraphics[width=0.33\linewidth]{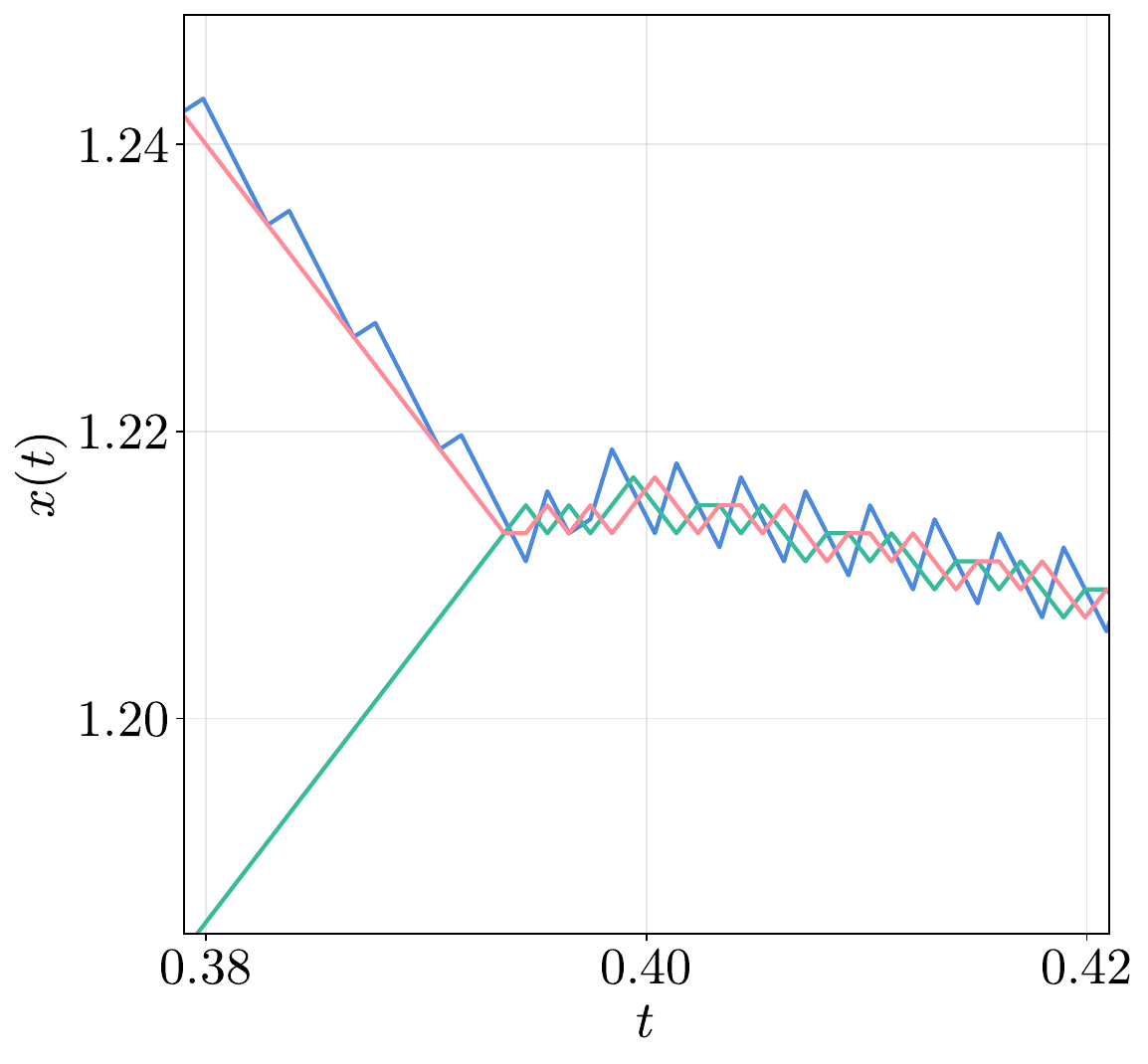}}
        \subfigure[{Simulation with $x(0) = [0.001, 1.0001, 2, 3.999, 3, 2]^T$. Left: full trajectories; middle and right: zoom-ins on regions R2 and R3, marked in the left plot.}]{\label{Example 1 xt 2}\includegraphics[width=0.305\linewidth]{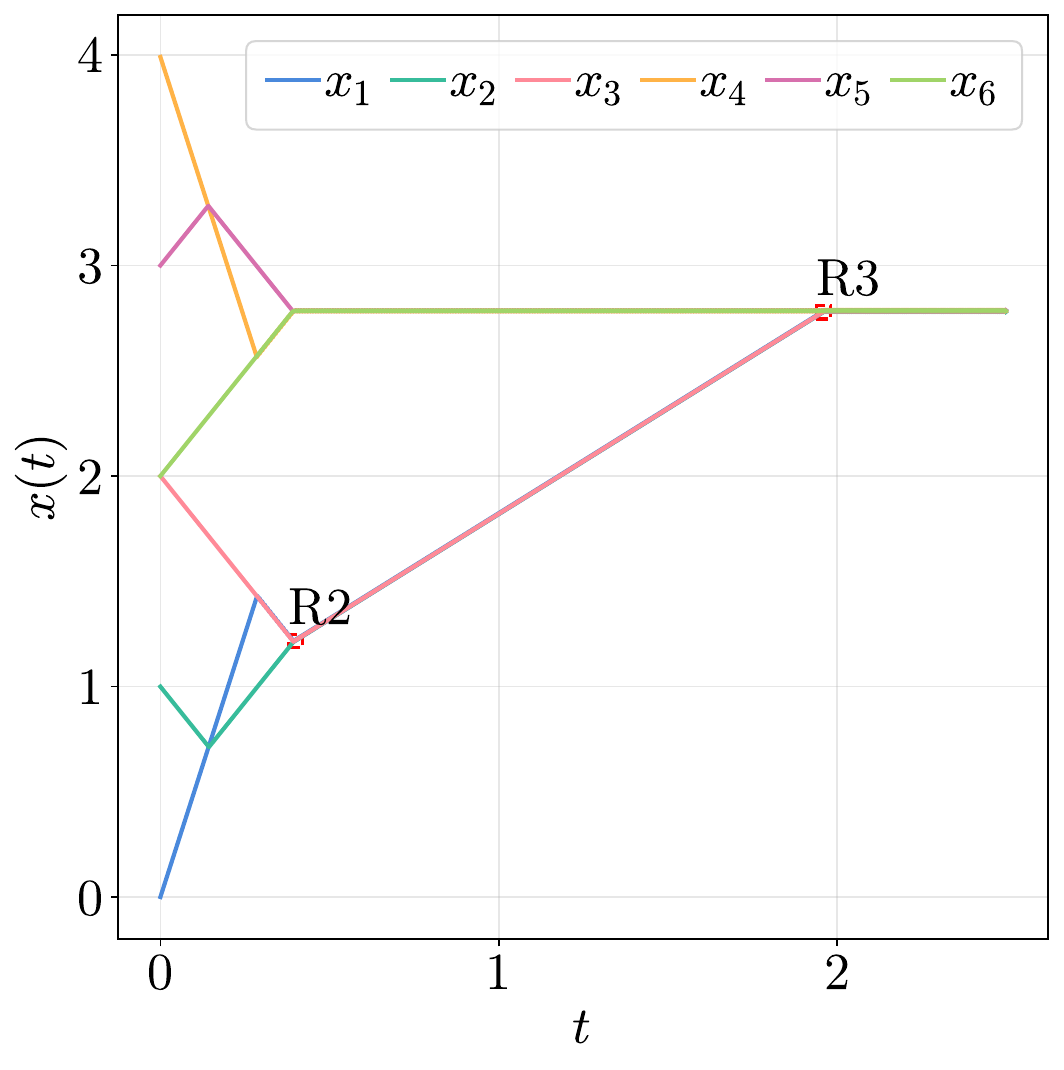}\includegraphics[width=0.33\linewidth]{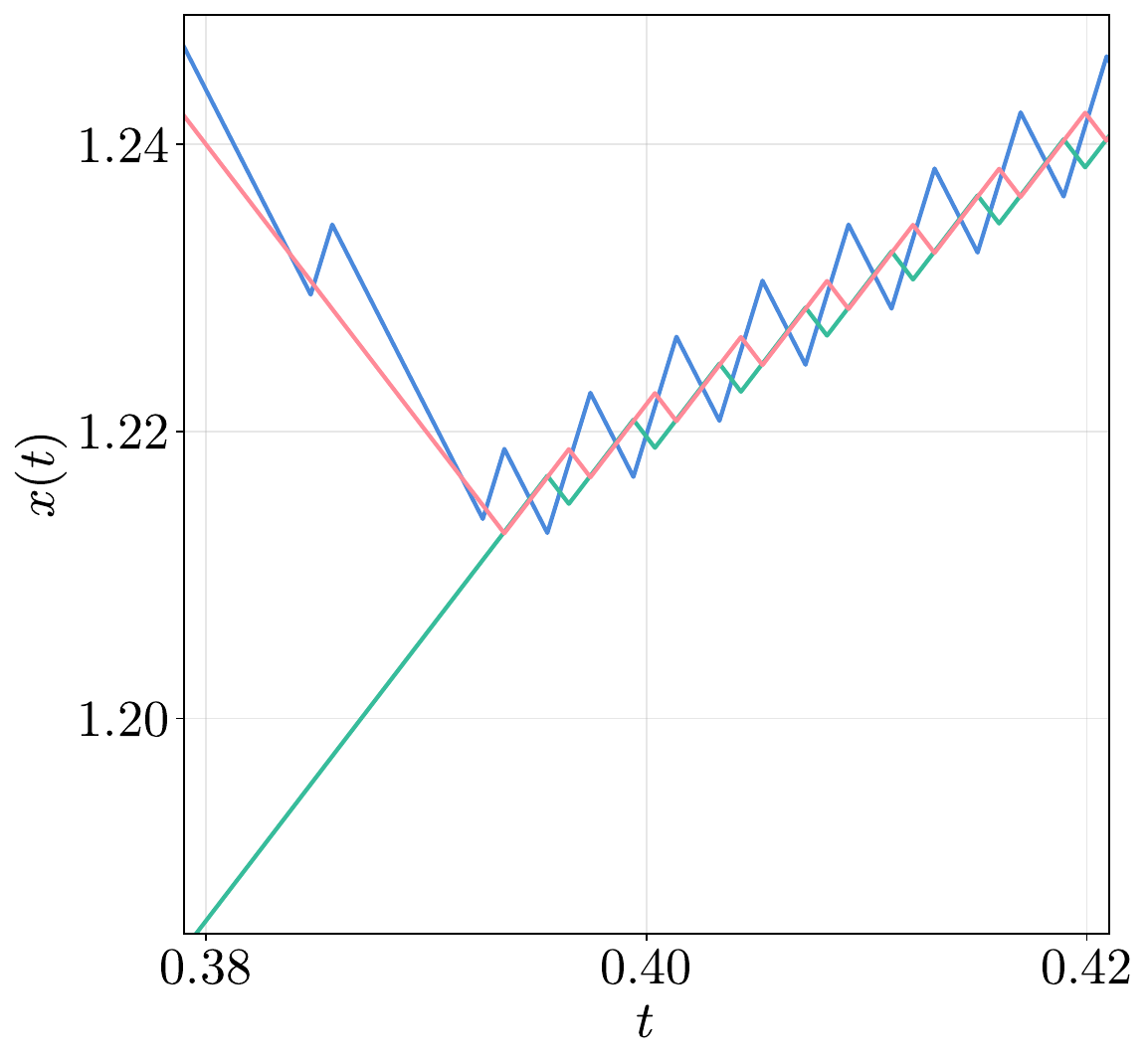}\includegraphics[width=0.33\linewidth]{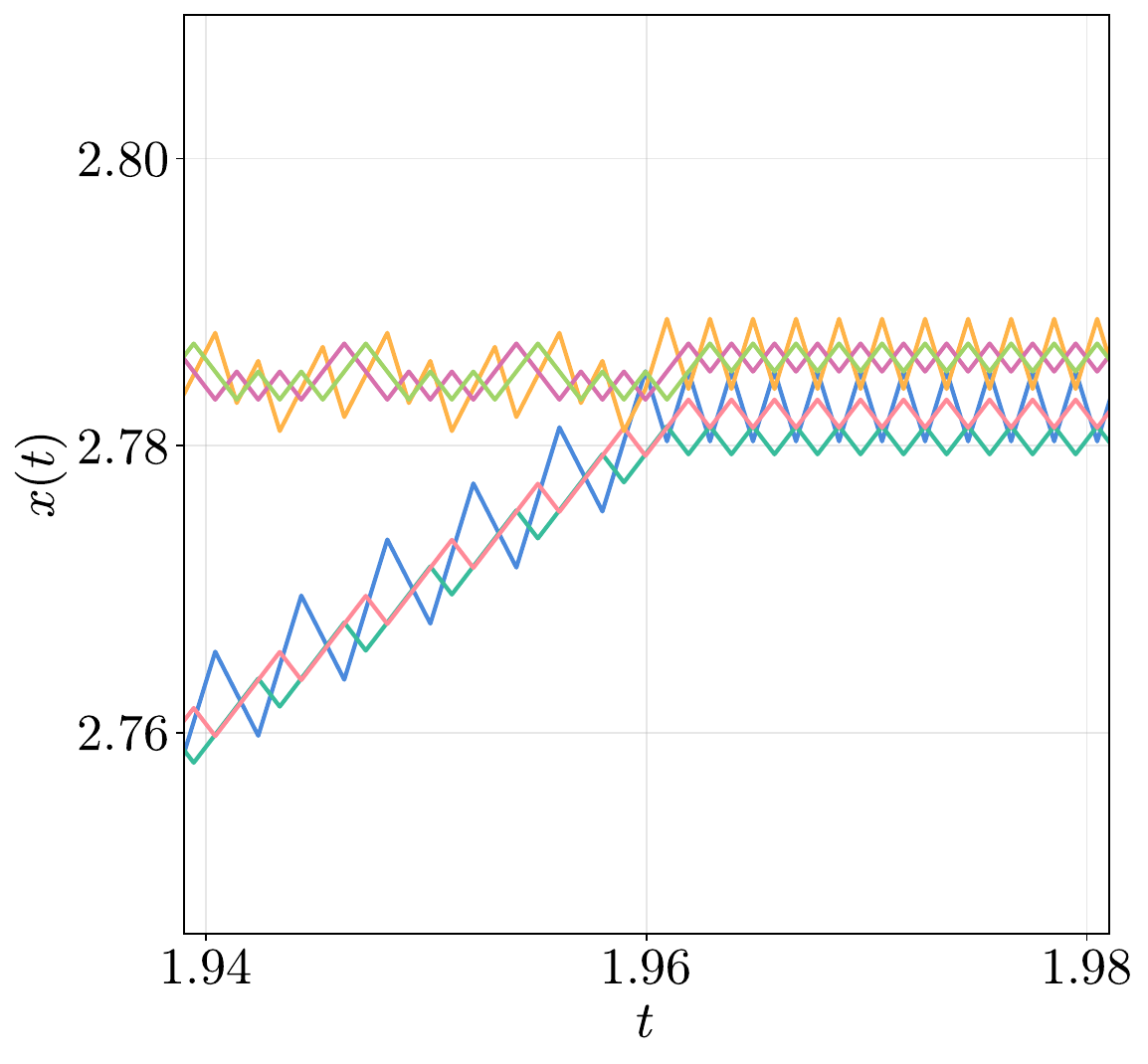}}
        \caption{Graph structure and simulation results under two different initial conditions in \Cref{Example 1}. Simulations used the Simple Euler method with a time step of $2^{-10}$ s. Both dissensus (\Cref{Example 1 xt 1}) and consensus solutions (\Cref{Example 1 xt 2}) are shown. Experiments show that two nearly identical initial conditions can lead to drastically different outcomes.
        }
        \label{fig Example 1}
    \end{figure}
    
\end{Example}

Interestingly, in \Cref{Example 1 xt 1}, after agents $1$–$3$ merge, they move away from agents $4$–$6$ despite no external force driving separation; paradoxically, agents $1$ and $4$ remain mutually attractive.

This behavior stems from inherent imprecision in implementing the sign function, which may occur both in numerical simulations and in physical systems.

In simulations, finite time steps cause sign-function imprecision near discontinuities, leading to chattering. As shown in \Cref{Example 1 xt 1} (right), agents can enter periodic collective motion. Such \textit{Steady-State Waveforms} have an average ``velocity''; for example, the waveform in \Cref{Example 1 xt 1} (right) propagates with velocity $-\tfrac{1}{3}$.

The system’s initial conditions dictate which Steady-State Waveform it converges to, producing distinct macroscopic behaviors. Even slight variations can switch outcomes, and the outcomes are inherently unpredictable. For example, with $x(0) = [0.001,1.0001,2,3.999,3,2]^T$ (\Cref{Example 1 xt 2}), at around $0.39$ s agents $1$–$3$ form a waveform with average velocity $1$ (\Cref{Example 1 xt 2}, middle), while agents $4$–$6$ form one with velocity $0$. By about $1.96$ s, the system reaches consensus, yielding a waveform with velocity $0$ (\Cref{Example 1 xt 2}, right).

Reducing the time step or using higher‑order solvers (e.g., Runge–Kutta) does not eliminate this phenomenon (see \Cref{ss:s:1}).

Due to non-uniqueness, saying that \Cref{e:c:3} ``achieves consensus'' can be misleading: the same initial condition may lead to both consensus and dissensus solutions, as shown in \Cref{Example 2} and \Cref{Example 1}.

In systems with non‑unique solutions, ``strong'' and ``weak'' qualifiers indicate whether a property holds for all solutions or merely for some. For example, the concepts of Stable and Weakly Stable in \cite[p. 152]{filippov2013differential}, the concept of weakly invariant set in \cite[Definition 2.7]{ryan1998integral}. 
We define Strong and Weak Consensus as follows.

\begin{Definition}[Strong Consensus / Weak Consensus]
The system of differential equations is said to achieve Strong Consensus (resp., Weak Consensus) if, for every initial condition, all solutions (resp., there exists at least one solution) $x(t)$ to the system are (resp., is a) consensus solution(s). 
\end{Definition}

As demonstrated in \Cref{Example 2} and \Cref{Example 1}, the system may still fail to reach Strong Consensus even without disturbance and with a strongly connected graph. This renders conventional consensus criteria (e.g., ``whether the communication graph is connected'') ineffective, while also invalidating traditional consensus rate estimates (e.g., ``the real part of the Laplacian matrix's second smallest eigenvalue''). 

\section{Consensus Analysis}\label{s:Consensus Analysis}
We study consensus by analyzing the evolution of the difference between the maximum and minimum agent states. For a system state \(x \in \mathbb{R}^n\), define:
\begin{itemize}
\item $M(x) := \max_{i\in \mathcal{V}} x_i$, the maximum agent state;
\item $m(x) := \min_{i\in \mathcal{V}} x_i$, the minimum agent state;
\item $\D{x} := M(x) - m(x) = \max_{i,j\in \mathcal{V}}(x_i-x_j)$, the maximal difference;
\item \(S_M(x) := \{i\in \mathcal{V}\mid x_i = M(x)\}\), the set of agents with maximum agent state;
\item \(S_m(x) := \{i\in \mathcal{V}\mid x_i = m(x)\}\), the set of agents with minimum agent state.
\end{itemize}

The quantity $\D{\cdot}$ is a seminorm on $\mathbb{R}^n$, referred to as the range seminorm. It equal to twice the seminorm $\left|\!\left|\!\left|\cdot\right|\!\right|\!\right|_{dist,\infty}$ defined in \cite{de2023dual}.

\begin{Lemma}\label{t:m:2}
    Let $x:\mathbb{R}\to\mathbb{R}^n$ be a solution of \Cref{e:c:3} where $\mathcal{W}\in \mathbb{R}_{\ge 0}^{n \times n}$ and $d(\cdot,\cdot)$ satisfies \Cref{d satisfies}. Then the derivatives $\frac{\mathrm{d}}{\mathrm{d}t}M(x(t))$ and $\frac{\mathrm{d}}{\mathrm{d}t}m(x(t))$ exist a.e. on $t\in \mathbb{R}$, and satisfy
    \begin{equation}\label{e:m:1}
        \begin{aligned}
            & \frac{\mathrm{d}}{\mathrm{d}t}M(x(t)) \in \left(\operatorname{span}(\pmb{1})\cap\mathcal{F}_{S_M(x(t))}(t, x(t))\right)_1,\\
            & \frac{\mathrm{d}}{\mathrm{d}t}m(x(t)) \in \left(\operatorname{span}(\pmb{1})\cap\mathcal{F}_{S_m(x(t))}(t, x(t))\right)_1.
        \end{aligned}
    \end{equation}
\end{Lemma}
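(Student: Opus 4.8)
The plan is to establish that $M(x(t))$ and $m(x(t))$ are locally Lipschitz (hence differentiable a.e.), and then pin down their derivatives by combining two observations: the derivative must be realized by the motion of some agent that is currently at the extreme, and along any interval where the maximal difference is positive, the set of maximizers is collectively moving ``rigidly'' in the $\pmb{1}$ direction at the shared rate $\tfrac{\mathrm{d}}{\mathrm{d}t}M(x(t))$. Let me sketch each piece.

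\emph{Regularity and existence a.e.} Since $x(\cdot)$ is absolutely continuous and $M(\cdot) = \max_{i} (\cdot)_i$, $m(\cdot)=\min_i(\cdot)_i$ are globally Lipschitz (with constant $1$ in the $\infty$-norm), the compositions $t\mapsto M(x(t))$ and $t\mapsto m(x(t))$ are absolutely continuous, so by Rademacher/Lebesgue their derivatives exist a.e. First I would restrict attention to the full-measure set of times $t$ at which (i) $\dot x(t)$ exists and lies in $\mathcal{F}(t,x(t))$, and (ii) $\tfrac{\mathrm{d}}{\mathrm{d}t}M(x(t))$ and $\tfrac{\mathrm{d}}{\mathrm{d}t}m(x(t))$ exist. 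I will argue the inclusions at every such $t$.

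\emph{Identifying the derivative with extremal agents.} Fix such a $t$ and let $i\in S_M(x(t))$. Because $x_i(s)\le M(x(s))$ for all $s$ with equality at $s=t$, the function $s\mapsto M(x(s))-x_i(s)$ attains a minimum at $s=t$, forcing $\tfrac{\mathrm{d}}{\mathrm{d}t}M(x(t)) = \dot x_i(t)$ for \emph{every} $i\in S_M(x(t))$. In particular all maximizing agents share the same instantaneous velocity, equal to $\tfrac{\mathrm{d}}{\mathrm{d}t}M(x(t))$; equivalently the vector $\dot x_{S_M}(t)$ is a scalar multiple of $\pmb{1}$, so $\dot x_{S_M}(t)\in\operatorname{span}(\pmb 1)$. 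Since $\dot x(t)\in\mathcal{F}(t,x(t))$, the restricted vector $\dot x_{S_M}(t)$ lies in $\mathcal{F}_{S_M(x(t))}(t,x(t))$ by the componentwise meaning of the subscript notation. Combining the two memberships gives $\dot x_{S_M}(t)\in\operatorname{span}(\pmb 1)\cap\mathcal{F}_{S_M(x(t))}(t,x(t))$, and taking the first coordinate recovers exactly $\tfrac{\mathrm{d}}{\mathrm{d}t}M(x(t))$, which is the claimed inclusion. The argument for $m$ is symmetric, using that $s\mapsto x_i(s)-m(x(s))$ has a minimum at $t$ for each $i\in S_m(x(t))$.

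\emph{Main obstacle.} The delicate point is the differentiation step that gives $\tfrac{\mathrm{d}}{\mathrm{d}t}M(x(t))=\dot x_i(t)$: although $g_i(s):=M(x(s))-x_i(s)\ge 0$ with $g_i(t)=0$, I can only differentiate and use the first-order minimality condition if $g_i$ is differentiable at $t$, which holds precisely on the full-measure set fixed above since both $M(x(\cdot))$ and $x_i(\cdot)$ are differentiable there; the nonnegativity with an interior zero then yields $g_i'(t)=0$. One must also be careful that $S_M(x(t))$ can change with $t$, but this causes no problem because the inclusion is asserted pointwise and only uses the membership relations at the single instant $t$ through the set-valued map $\mathcal{F}_{S_M(x(t))}$. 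Finally, the subscript-restriction identity $\dot x_{S_M}(t)\in\mathcal{F}_{S_M(x(t))}(t,x(t))$ follows directly from $\dot x(t)\in\mathcal{F}(t,x(t))$ together with the projection convention for the notation $(\cdot)_A$, so no separate closedness or convexity argument for $\mathcal{F}$ is needed here.
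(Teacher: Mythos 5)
Your proof is correct, and it takes a genuinely different route from the paper's. The paper invokes the nonsmooth-analysis machinery: it cites the fact that $M(\cdot)$ is locally Lipschitz and regular, applies the chain rule of Shevitz--Paden to conclude $\frac{\mathrm{d}}{\mathrm{d}t}M(x(t))\in\widetilde{\mathcal{L}}_{\mathcal{F}}M(x(t))$ a.e.\ (the set-valued Lie derivative), and then unpacks that set using the characterization of the Clarke generalized gradient $\partial M(x)$ as convex combinations $E_{S_M(x)}\theta$, showing $a\in\widetilde{\mathcal{L}}_{\mathcal{F}}M(x(t))$ iff some $v\in\mathcal{F}(t,x(t))$ has $v_{S_M(x(t))}=a\pmb{1}$. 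You bypass all of this with the elementary observation that $g_i(s)=M(x(s))-x_i(s)\ge 0$ has a global minimum with value $0$ at $s=t$ for each active index $i$, so at any point of joint differentiability $g_i'(t)=0$ and hence $\dot x_{S_M(x(t))}(t)=\frac{\mathrm{d}}{\mathrm{d}t}M(x(t))\,\pmb{1}$; combined with $\dot x(t)\in\mathcal{F}(t,x(t))$ and the projection meaning of the subscript, this gives the stated inclusion directly. Your version is self-contained (needing only absolute continuity of $M\circ x$ and first-order minimality) and arguably cleaner, since it exhibits the witness $\dot x_{S_M}(t)$ explicitly rather than extracting it from the Lie-derivative set; the paper's version buys generality, as the regular-function/Clarke-gradient framework extends to Lyapunov-type functions other than $\max$ and $\min$ without a bespoke argument. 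Both establish existence a.e.\ the same way, via Lipschitz composition with an absolutely continuous trajectory.
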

\begin{proof}
See \Cref{Proof of t:m:2}.
\end{proof}

For each nonempty $V\subset\mathcal{V}$, define $\alpha^V,\beta^V\in \mathbb{R}^{\operatorname{card}(V)}$ as
\begin{equation}\label{def of alpha beta}
        \alpha^V_i = -\sum_{j\in \mathcal{V}} w_{ji} \quad \forall i\in V, \qquad
    \beta^V_i = \sum_{j\in V} w_{ji} -\sum_{j\in \mathcal{V}\backslash V} w_{ji} \quad \forall i\in V.
\end{equation}  

The $\operatorname{card}(V)$-dimensional hyperrectangle
\begin{equation}
    S(V):=\bigl\{y\in\mathbb{R}^{\operatorname{card}(V)}\mid \alpha^V\preceq y\preceq\beta^V\bigr\}
\end{equation}
in certain cases contains $\mathcal F_V(t,x)$.

\begin{Lemma}\label{c f F subset S}
    Let $\mathcal{W}\in \mathbb{R}_{\ge 0}^{n\times n}$ and $d(\cdot,\cdot)$ satisfies \Cref{d satisfies}. Then for every $(t,x) \in \mathbb{R}^{n+1}$,
    \begin{equation}
            \mathcal{F}_{S_M(x)}(t,x) \subset S(S_M(x)), \qquad
            -\mathcal{F}_{S_m(x)}(t,x) \subset S(S_m(x)).
    \end{equation}
\end{Lemma}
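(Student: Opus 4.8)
The plan is to prove both inclusions coordinate by coordinate, exploiting the fact that the parametric‑combination definition of $\mathcal{F}$ makes the value of each coordinate an independent affine function of its own parameters. Since $f$ is affine in $(d_1,\dots,d_n,u_{11},\dots,u_{nn})$ and each $\mathcal{D}_i,\mathcal{U}_{ij}$ is convex, the image $f(\mathcal{D}_1,\dots,\mathcal{U}_{nn})$ is already convex, so every $v\in\mathcal{F}(t,x)$ has the form $v_i = d_i + \sum_{j\in\mathcal{V}} u_{ji}$ with $d_i\in\mathcal{D}_i(t,x)$ and $u_{ji}\in\mathcal{U}_{ji}(x)$, chosen independently across $(i,j)$. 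Because $S(V)$ is a hyperrectangle, to obtain $\mathcal{F}_{S_M(x)}(t,x)\subset S(S_M(x))$ it then suffices to bound each coordinate $v_i$, $i\in S_M(x)$, between $\alpha^{S_M(x)}_i$ and $\beta^{S_M(x)}_i$ separately; no joint constraint across coordinates is required.

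The first step is to record the possible values of the regularized terms. From the definition of $\mathcal{U}_{ij}$ and the monotonicity of $\operatorname{sign}$, one gets $\mathcal{U}_{ji}(x)=\{-w_{ji}\}$ when $x_j<x_i$, $\mathcal{U}_{ji}(x)=\{w_{ji}\}$ when $x_j>x_i$, and $\mathcal{U}_{ji}(x)=[-w_{ji},w_{ji}]$ when $x_j=x_i$ with $j\neq i$, while the diagonal term $\mathcal{U}_{ii}(x)=\{0\}$ contributes nothing. I also use $\mathcal{D}_i(t,x)\subset[-w_{ii},w_{ii}]$, which follows from the bound \Cref{d satisfies} together with closedness of $[-w_{ii},w_{ii}]$ under limits and convex combinations.

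Now fix $V:=S_M(x)$ and $i\in V$. Every $j\notin V$ satisfies $x_j<x_i=M(x)$, forcing $u_{ji}=-w_{ji}$, while every $j\in V\setminus\{i\}$ satisfies $x_j=x_i$, leaving $u_{ji}\in[-w_{ji},w_{ji}]$ free. Writing $v_i = d_i + \sum_{j\in V\setminus\{i\}} u_{ji} - \sum_{j\in\mathcal{V}\setminus V} w_{ji}$ and taking extremes gives, on one hand, $v_i \le w_{ii} + \sum_{j\in V\setminus\{i\}} w_{ji} - \sum_{j\in\mathcal{V}\setminus V} w_{ji} = \beta^V_i$ (using $i\in V$, so that $w_{ii}$ is exactly the $j=i$ term of $\sum_{j\in V}w_{ji}$), and on the other, $v_i \ge -w_{ii} - \sum_{j\in V\setminus\{i\}} w_{ji} - \sum_{j\in\mathcal{V}\setminus V} w_{ji} = -\sum_{j\in\mathcal{V}} w_{ji} = \alpha^V_i$. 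This is precisely $\alpha^V_i \le v_i \le \beta^V_i$, proving the first inclusion. The second inclusion is the mirror image: for $V':=S_m(x)$ and $i\in V'$, every $j\notin V'$ now has $x_j>x_i$, forcing $u_{ji}=+w_{ji}$, and the same extremal estimates applied to $-v_i$ yield $\alpha^{V'}_i \le -v_i \le \beta^{V'}_i$, i.e.\ $-\mathcal{F}_{S_m(x)}(t,x)\subset S(S_m(x))$.

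The only genuinely delicate point is the bookkeeping of the self‑loop weight $w_{ii}$: it enters the dynamics through the disturbance bound rather than through the identically zero diagonal sign term, yet in the definitions of $\alpha^V$ and $\beta^V$ it sits inside the summations $\sum_{j\in\mathcal{V}} w_{ji}$ and $\sum_{j\in V} w_{ji}$. The computation above is arranged so that the $\pm w_{ii}$ coming from $d_i\in[-w_{ii},w_{ii}]$ supplies exactly the $j=i$ contribution to those sums; keeping this alignment consistent is where the care lies, but no real difficulty arises beyond it.
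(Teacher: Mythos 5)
Your proof is correct and follows essentially the same route as the paper's: for each coordinate $i$ in the extremal set, the terms $\mathcal{U}_{ji}(x)$ collapse to the singleton $\{-w_{ji}\}$ (resp.\ $\{w_{ji}\}$) for $j$ outside the set, and the remaining terms together with $d_i\in[-w_{ii},w_{ii}]$ are bounded to give exactly $\alpha^V_i\le v_i\le\beta^V_i$. Your write-up is in fact somewhat more explicit than the paper's (which only states the two coordinate bounds), particularly in justifying that every element of $\mathcal{F}(t,x)$ decomposes as $d_i+\sum_j u_{ji}$ and in tracking how the self-loop weight $w_{ii}$ enters $\beta^V_i$ through the disturbance rather than the sign term.
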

\begin{proof}
    See \Cref{Proof of c f F subset S}.
\end{proof}

Define the Autonomy Index $\Q: \mathcal{P}(\mathcal{V})\setminus\{\emptyset\}\to \overline{\mathbb{R}}$ as follows:
\begin{equation}
    \Q(V) := \sup\left( \operatorname{span}(\pmb{1}) \cap S(V)\right)_1,
\end{equation}
where $\overline{\mathbb{R}} := \{-\infty,\infty\}\cup \mathbb{R}$, and $\sup$ represents the supremum. We formally write $\sup \emptyset = -\infty$ to account for the case where $\operatorname{span}(\pmb{1}) \cap S(V) = \emptyset$. We also refer to the index $\Qu$ defined in \Cref{eq in t:m:5} as the Autonomy Index, although its value may differ in certain cases; see \Cref{s:Computation of A} for details.

Then by \Cref{t:m:2} and \Cref{c f F subset S}, for almost all $t$, 
\begin{equation}
    -\Q(S_m(x(t))) \le \frac{\mathrm{d}}{\mathrm{d}t}m(x(t)), \qquad \frac{\mathrm{d}}{\mathrm{d}t}M(x(t)) \le \Q(S_M(x(t))).
\end{equation}

Here $\Q(V)$ is the maximal rate at which agents in $V\subset\mathcal V$ can jointly move in the positive (or negative) direction when they form the maximal (or minimal) set, and $\Q(V_1) + \Q(V_2)$ is the maximal rate at which two group of agents $V_1\subset \mathcal{V}$ and $V_2\subset \mathcal{V}$, serving as the maximal and minimal sets respectively, move away from each other.

Since any disjoint nonempty $V_1, V_2\subset\mathcal V$ can arise as the maximal and minimal sets, respectively, we define the Polarization Index $\A:\mathbb{R}_{\ge 0}^{n\times n} \to \mathbb{R}$ as follows:
\begin{equation}\label{def of A}
    \A(\mathcal{W}) := \max_{\substack{V_1,V_2\subset\mathcal{V}\\V_1,V_2\neq\emptyset,\ V_1\cap V_2=\emptyset}}[\Q(V_1)+\Q(V_2)].
\end{equation}

\Cref{eq max min op problem} is an equivalent definition of $\A(\mathcal{W})$, and its equivalence is established in \Cref{t:m:5}.

\begin{Theorem}\label{t:m:3}
    Let $\mathcal{W}\in \mathbb{R}_{\ge 0}^{n\times n}$. Then for every solution $x:\mathbb{R}\to \mathbb{R}^n$ of \Cref{e:c:3} with $d(\cdot,\cdot)$ satisfies \Cref{d satisfies} and $t^0\in\mathbb{R}$,
    \begin{equation}\label{eq in t:m:3}
    \D{x(t)} \le \max\Big\{0,\D{x(t^0)}+\A(\mathcal{W})t\Big\} \quad \forall t \ge t^0.
    \end{equation}

    Moreover, for every $r\in \mathbb{R}_{\ge 0}$, there exists a solution $x'(\cdot)$ of \Cref{e:c:3} with $d'(\cdot,\cdot)$ satisfying \Cref{d satisfies} and $\D{x'(0)} = r$ such that
    \begin{equation}\label{eq 2 in t:m:3}
    \D{x'(t)} = \max\Big\{0,\D{x'(0)}+\A(\mathcal{W})t\Big\} \quad \forall t \ge 0.
    \end{equation}
\end{Theorem}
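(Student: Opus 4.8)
The plan is to treat the envelope bound \Cref{eq in t:m:3} and its attainment \Cref{eq 2 in t:m:3} separately: the first is an integration of the one-sided derivative estimates already recorded, while the second requires constructing an explicit extremal solution.

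For \Cref{eq in t:m:3}, write $g(t):=\D{x(t)}$. Since $\D{\cdot}$ is a seminorm (hence Lipschitz) and $x(\cdot)$ is absolutely continuous, $g$ is absolutely continuous, so it is recovered from $g'$ by integration and it suffices to bound $g'$. By \Cref{t:m:2} and \Cref{c f F subset S} (the two inequalities displayed just before \Cref{def of A}), for a.e. $t$ we have $g'(t)=\frac{\mathrm{d}}{\mathrm{d}t}M(x(t))-\frac{\mathrm{d}}{\mathrm{d}t}m(x(t))\le \Q(S_M(x(t)))+\Q(S_m(x(t)))$. At any $t$ with $g(t)>0$ the sets $S_M(x(t))$ and $S_m(x(t))$ are nonempty and disjoint, so \Cref{def of A} gives $g'(t)\le\A(\mathcal{W})$ on $\{g>0\}$. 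The remaining ingredient is a comparison lemma: if $g\ge 0$ is absolutely continuous and $g'(t)\le c$ a.e. on $\{g>0\}$, then $g(t)\le\max\{0,\,g(t^0)+c\,(t-t^0)\}$. I would prove it by fixing $t_1$; if $g(t_1)>0$, let $t_2\in[t^0,t_1)$ be the last instant with $g(t_2)=0$ (taking $t_2=t^0$ if $g>0$ throughout), so that $g>0$ on $(t_2,t_1]$ and integrating $g'\le c$ there gives $g(t_1)\le g(t_2)+c(t_1-t_2)$; a short case split on the sign of $c$ then yields the stated envelope. Taking $c=\A(\mathcal{W})$ produces \Cref{eq in t:m:3}.

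For attainment I would build a solution that evolves as two rigid clusters. Fix disjoint nonempty $V_1^{\ast},V_2^{\ast}$ attaining the maximum in \Cref{def of A}, so that $\A(\mathcal{W})=\Q(V_1^{\ast})+\Q(V_2^{\ast})$ with both terms finite (singleton pairs already give finite values, so the maximum is finite). The definition of $\Q$ says $\Q(V_1^{\ast})\pmb{1}\in\operatorname{span}(\pmb{1})\cap S(V_1^{\ast})$, i.e. $\alpha^{V_1^{\ast}}\preceq\Q(V_1^{\ast})\pmb{1}\preceq\beta^{V_1^{\ast}}$; by the meaning of $\alpha^{V},\beta^{V}$ in \Cref{def of alpha beta} this is precisely the statement that, when the agents of $V_1^{\ast}$ share the maximal value while every other agent is strictly below, each $i\in V_1^{\ast}$ can be driven at the common rate $\Q(V_1^{\ast})$ by an admissible disturbance $d_i'\in[-w_{ii},w_{ii}]$ together with an admissible Filippov selection for the ties inside $V_1^{\ast}$; symmetrically $V_2^{\ast}$ can be driven downward at rate $\Q(V_2^{\ast})$. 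If this extreme configuration is sustained for all forward time (until consensus), then $M(x'(t))\ge M(x'(0))+\Q(V_1^{\ast})t$ and $m(x'(t))\le m(x'(0))-\Q(V_2^{\ast})t$, hence $\D{x'(t)}\ge r+\A(\mathcal{W})t$ while this is positive; choosing $x'(0)$ with $V_1^{\ast}$ on top, $V_2^{\ast}$ on the bottom and $\D{x'(0)}=r$ fixes the initial spread, and once $\D{\cdot}$ reaches $0$ one keeps all states equal. Intersecting this lower bound with \Cref{eq in t:m:3} gives the exact identity \Cref{eq 2 in t:m:3}.

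The hard part is controlling the agents in $\mathcal{V}\setminus(V_1^{\ast}\cup V_2^{\ast})$ so that $V_1^{\ast}$ and $V_2^{\ast}$ remain the maximal and minimal sets: were an interior agent to overtake the top cluster or fall below the bottom one, the active extreme sets, and with them the feasibility of the prescribed rates, would change. My plan is first to absorb into a cluster every interior agent whose inclusion does not lower that cluster's index; using $\beta^{V\cup\{k\}}_i=\beta^{V}_i+2w_{ki}$ for $i\in V$, this is possible exactly when $\beta^{V\cup\{k\}}_k\ge\Q(V)$, and since the sum cannot exceed $\A(\mathcal{W})$ it then stays equal to $\A(\mathcal{W})$. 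A genuinely interior agent is attracted toward neither cluster, and the monotone restoring tendency (a higher position yields more downward sign-force) lets me place it at an interior level carrying an admissible velocity that keeps it strictly between the clusters; freezing the order with constant velocities and a constant disturbance keeps every sign constant, so the equation holds. Showing that these interior levels and velocities can be chosen mutually consistently and persist up to the consensus time is the crux, and it is most delicate when $\A(\mathcal{W})<0$, since the band then contracts and every interior agent must also reach the common limit; there I would either exhibit an optimal pair leaving no interior agent, or run the construction backward in time from consensus, where the sign terms become repulsive and the spreading mimics the $\A(\mathcal{W})\ge 0$ case.
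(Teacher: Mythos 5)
The first half of your argument (the upper bound \Cref{eq in t:m:3}) is essentially identical to the paper's: the same combination of \Cref{t:m:2} and \Cref{c f F subset S} to get $\frac{\mathrm{d}}{\mathrm{d}t}\D{x(t)}\le\A(\mathcal{W})$ a.e.\ on $\{\D{x(t)}>0\}$, followed by the same comparison lemma (the paper's \Cref{l in proof t:m:3}). That part is fine.

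The attainment half has a genuine gap, and it is exactly the one you flag yourself: you never actually produce trajectories for the interior agents $V_3=(V_1^{\ast}\cup V_2^{\ast})^c$ that stay inside the band. Your three proposed devices do not close it. Absorbing agents into a cluster only removes agents whose inclusion preserves the index; it says nothing about the remaining ones. Freezing the interior agents at constant levels with constant velocities so that ``every sign stays constant'' is not possible in general --- interior agents influencing one another may admit no static ordering with consistent constant velocities (this is precisely the chattering/non-uniqueness phenomenon the paper documents in \Cref{Example 1}), so you cannot assume such a configuration exists. And running the construction backward in time from consensus does not produce a forward Filippov solution of the original inclusion. The paper's resolution is different and avoids explicit construction entirely: after fixing $d'$ via \Cref{l 3 in proof of t:m:3} so that the two clusters can move rigidly at rates $\Q(V_1^{\ast})$ and $-\Q(V_2^{\ast})$, it defines the \emph{induced subsystem} on $V_3$ with a modified disturbance $d''$ that absorbs the (now deterministic) sign-influence of the frozen clusters, invokes the existence theorem \Cref{t:m:1} to obtain \emph{some} solution $x''$ of that subsystem, and then proves by contradiction that \emph{every} such $x''$ remains in the band $[-1-\Q(V_2^{\ast})t,\,1+\Q(V_1^{\ast})t]$: if an agent of $V_3$ escaped above, then on a set of positive measure one would have $\frac{\mathrm{d}}{\mathrm{d}t}M(x''(t))>\Q(V_1^{\ast})$, which by \Cref{t:m:2} and \Cref{c f F subset S} forces some $V\subset V_3$ with $\Q(V)>\Q(V_1^{\ast})$ and hence $\Q(V)+\Q(V_2^{\ast})>\A(\mathcal{W})$, contradicting the maximality in \Cref{def of A}. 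That optimality-based a priori bound is the missing idea; without it (or an equivalent), your extremal solution is not shown to exist.
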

\begin{proof}
See \Cref{Proof of t:m:3}.    
\end{proof}

Define $T:\mathbb{R}_{\ge 0}\to \mathbb{R}_{\ge 0}$ as
\begin{equation}\label{eq in t:m:4}
    T(r) = \frac{r}{|\A(\mathcal{W})|}.
\end{equation}

\begin{Corollary}\label{t:m:4}
    Let $\mathcal{W}\in \mathbb{R}_{\ge 0}^{n\times n}$. Then, a necessary and sufficient condition for \Cref{e:c:3} with any $d(\cdot,\cdot)$ satisfies \Cref{d satisfies} to achieve Strong Consensus is:
    \begin{equation}
        \A(\mathcal{W}) < 0
    \end{equation}

    If $\A(\mathcal{W}) < 0$, all solutions $x(\cdot)$ of \Cref{e:c:3} with some $d(\cdot,\cdot)$ satisfying \Cref{d satisfies} achieve consensus no later than $T(\D{x(0)})$, i.e., $x_i = x_j$ for all $t \ge T(\D{x(0)})$ and all $i, j \in \mathcal{V}$. Moreover, for every $r \in \mathbb{R}_{\ge 0}$, $T(r)$ is the least upper bound on the consensus time over all solutions of \Cref{e:c:3} with $d(\cdot,\cdot)$ satisfying \Cref{d satisfies} and $\D{x(0)} = r$.
\end{Corollary}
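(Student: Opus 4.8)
The plan is to read everything off \Cref{t:m:3}, which simultaneously bounds $\D{x(t)}$ above for \emph{every} solution and realizes that bound with equality for \emph{one} distinguished solution; the corollary is essentially a translation of these two facts into statements about consensus.

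First, for sufficiency together with the finite-time claim, I would assume $\A(\mathcal{W}) < 0$, fix an arbitrary solution $x(\cdot)$ with arbitrary initial condition, and set $r := \D{x(0)}$. Applying the upper bound of \Cref{t:m:3} with $t^0 = 0$ gives $\D{x(t)} \le \max\{0,\, r + \A(\mathcal{W})t\}$ for all $t \ge 0$. Since $\A(\mathcal{W}) < 0$, the affine term $r + \A(\mathcal{W})t = r - |\A(\mathcal{W})|t$ is nonpositive exactly when $t \ge r/|\A(\mathcal{W})| = T(r)$, so the right-hand side vanishes there; because $\D{\cdot}$ is a (nonnegative) seminorm this forces $\D{x(t)} = 0$, i.e.\ $x_i(t) = x_j(t)$ for all $i,j$, for every $t \ge T(\D{x(0)})$. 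As the single bound already holds for all such $t$ at once, no separate ``consensus is preserved'' argument is needed, and since the conclusion is uniform over all solutions and all initial conditions, the system attains Strong Consensus. This proves sufficiency and the finite-time bound.

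Next, for necessity I would argue by contraposition: assuming $\A(\mathcal{W}) \ge 0$, I would invoke the attainment part of \Cref{t:m:3} with $r = 1$ to obtain a solution $x'(\cdot)$ with $\D{x'(0)} = 1$ and $\D{x'(t)} = \max\{0,\, 1 + \A(\mathcal{W})t\}$. Here $1 + \A(\mathcal{W})t \ge 1$ for all $t \ge 0$, so $\D{x'(t)} \ge 1$ and hence $\lim_{t\to\infty}\D{x'(t)} \neq 0$; thus $x'(\cdot)$ fails to be a consensus solution and Strong Consensus does not hold. This yields the necessity of $\A(\mathcal{W}) < 0$.

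Finally, for the least-upper-bound claim (under $\A(\mathcal{W}) < 0$), the first step already shows $T(r)$ is an upper bound on the consensus time of every solution with $\D{x(0)} = r$. To see it is the \emph{least} one, I would again use the distinguished solution of \Cref{t:m:3} with $\D{x'(0)} = r$: its profile $\D{x'(t)} = \max\{0,\, r + \A(\mathcal{W})t\}$ is strictly positive on the whole interval $[0, T(r))$, so $x'(\cdot)$ is not at consensus before $T(r)$ and its consensus time equals $T(r)$ exactly. Hence the supremum of consensus times over admissible solutions is attained at $T(r)$, which is therefore the least upper bound. I expect the only delicate point to be fixing the notion of ``consensus time'' so that the upper-bound and attainment halves match---reading it as the first $t$ with $\D{x(t)} = 0$ (maintained thereafter by the bound) and checking that the distinguished solution genuinely avoids consensus on all of $[0, T(r))$, not merely at the endpoint; the degenerate case $r = 0$, where $T(0) = 0$ and all states already coincide, is immediate.
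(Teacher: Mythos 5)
Your proposal is correct and matches the paper's approach: the paper simply declares the corollary a direct consequence of \Cref{t:m:3}, and your argument is exactly the expected unpacking of that claim (upper bound of \Cref{eq in t:m:3} for sufficiency and the time bound, the attaining solution of \Cref{eq 2 in t:m:3} for necessity and for showing $T(r)$ is least).
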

\begin{proof}
    This is a direct corollary of \Cref{t:m:3}.
\end{proof}

\section{Computation of $\A(\mathcal{W})$}\label{s:Computation of A}

In this section, we present an algorithm for computing $\A(\mathcal{W})$. Although its worst-case complexity is exponential, pruning strategies significantly reduce average-case runtime in practice.  

The following theorem transforms the computation of $\A(\mathcal{W})$ into an integer optimization problem with a max–min structure.
\begin{Theorem}\label{t:m:5}
    For $\mathcal{W}\in \mathbb{R}_{\geq 0}^{n \times n}$, define
    \begin{equation}\label{eq in t:m:5}
        \Qu(V) := \min_{i\in V}[\sum_{j\in V} w_{ji} -\sum_{j\in \mathcal{V}\backslash V} w_{ji}],
    \end{equation}
    then
    \begin{equation}\label{eq max min op problem}
    \begin{aligned}
        \A(\mathcal{W}) = \max_{\substack{V_1,V_2\subset\mathcal{V}\\V_1,V_2\neq\emptyset,\ V_1\cap V_2=\emptyset}}[\Qu(V_1)+\Qu(V_2)].
    \end{aligned}
    \end{equation}
\end{Theorem}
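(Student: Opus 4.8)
The plan is to reduce the autonomy index $\Q$ to an explicit formula, extract the easy inequality from $\Q(V)\le\Qu(V)$, and then show that the discrepancy between $\Q$ and $\Qu$ never affects the outer maximization by means of a singleton-replacement argument.

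First I would make the supremum defining $\Q$ explicit. The set $\operatorname{span}(\pmb 1)\cap S(V)$ consists exactly of the vectors $c\pmb 1$ with $\max_{i\in V}\alpha^V_i\le c\le\min_{i\in V}\beta^V_i$, so its first coordinate ranges over the interval $[\max_{i\in V}\alpha^V_i,\min_{i\in V}\beta^V_i]$. Hence $\Q(V)=\min_{i\in V}\beta^V_i=\Qu(V)$ whenever this interval is nonempty, i.e. whenever $\max_{i\in V}\alpha^V_i\le\min_{i\in V}\beta^V_i$ — call such a $V$ \emph{compatible} — and $\Q(V)=\sup\emptyset=-\infty$ otherwise. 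Two facts from \Cref{def of alpha beta} will drive the argument: $\alpha^V_i=-\sum_{j\in\mathcal V}w_{ji}$ is independent of $V$, and every singleton is compatible since $\beta^{\{i\}}_i-\alpha^{\{i\}}_i=2w_{ii}\ge0$. In particular $\Q(V)\le\Qu(V)$ for all $V$, with equality on compatible sets, so $\A(\mathcal W)$ cannot exceed the right-hand side of \Cref{eq max min op problem}.

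For the reverse direction I would observe that, because incompatible blocks contribute $-\infty$, the $\Q$-definition of $\A(\mathcal W)$ equals the objective $\Qu(V_1)+\Qu(V_2)$ maximized only over \emph{compatible} disjoint pairs, whereas the right-hand side maximizes the same objective with no compatibility constraint. It therefore suffices to show the unconstrained maximum is attained at a compatible pair, and this is the step I expect to be the main obstacle. The key observation I would exploit is that an incompatible block can be shrunk to a single vertex without lowering its $\Qu$-value: if $V$ is incompatible and $a\in\arg\max_{i\in V}\alpha^V_i$, then $\Qu(\{a\})=\beta^{\{a\}}_a\ge\alpha^V_a=\max_{i\in V}\alpha^V_i>\min_{i\in V}\beta^V_i=\Qu(V)$, where the first inequality is once more $\beta^{\{a\}}_a-\alpha_a=2w_{aa}\ge0$ and the strict inequality is exactly the failure of compatibility. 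Since $\{a\}\subseteq V$ is compatible and remains disjoint from the other block, replacing each incompatible block of an arbitrary pair by this singleton produces a compatible pair whose objective is no smaller (strictly larger when a replacement happens). Hence the $\Qu$-maximum over all disjoint pairs equals its maximum over compatible pairs, on which $\Qu=\Q$, and the two indices coincide. The remaining care is purely bookkeeping: the maximum is genuinely attained because $\mathcal V$ is finite and at least one compatible pair (two distinct singletons) always exists.
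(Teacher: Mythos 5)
Your proposal is correct and follows essentially the same route as the paper: the paper's Lemmas~\ref{l f Q least than Qu} and~\ref{l f Q inf iif Ql Qu} are exactly your explicit-interval description of $\Q(V)$ (with ``compatible'' meaning $\Ql(V)\le\Qu(V)$), and the paper's Lemma~\ref{l f QuV1QuV2 QV1QV2} is precisely your singleton-replacement step, choosing the same vertex $a\in\arg\max_{i\in V}\alpha^V_i$ and using the same inequalities $\beta^{\{a\}}_a\ge\alpha^V_a>\min_{i\in V}\beta^V_i$. No substantive differences to report.
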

\begin{proof}
    See \Cref{Proof of t:m:5}. 
\end{proof}

By \Cref{l f Q inf iif Ql Qu}, \( \Q(V) = \Qu(V) \) if and only if $\Q(V) \not = -\infty$. The only difference between the two is that \( \Q(V) \) can determine whether \( V \) can consistently remain as the maximal set (or minimal set) during a time interval \([a, b]\) (where \( a < b \)). $S(V)$ represents the set of possible velocity vectors of agents in $V$ when they form the maximal set. The velocity vectors in $\operatorname{span}(\pmb{1})$ indicate that all agents in $V$ have identical velocities. If $\Q(V) = -\infty$ then the intersection $\operatorname{span}(\pmb{1}) \cap S(V)$ is empty , they cannot move at the same velocity and thus cannot maintain identical states, causing the disintegration of subgroup $V$. The definition of $\Q(V)$ is geometric and offers intuitive insight, while $\Qu(V)$ is arithmetic and provides stronger interpretability.

The following lemma reduces the maximization domain.

\begin{Lemma}\label{t:m:6}
    Let $\mathcal{W} \in \mathbb{R}_{\geq 0}^{n \times n}$ and define $\Qu(V)$ as in \Cref{eq in t:m:5}.  
    Set
    \begin{equation}\label{eq 0 in t:m:6}
       z^* := \max_{\substack{V\subset\mathcal{V}\\V\neq\emptyset,\ V\not=\mathcal{V}}}[\Qu(V)+\Qu(V^c)].
    \end{equation}
    
    Then $\A(\mathcal{W}) \ge z^*$, and equality holds if $z^* \le 0$.
\end{Lemma}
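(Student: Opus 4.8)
The inequality $\A(\mathcal{W})\ge z^*$ is immediate and requires no hypothesis: every pair $(V,V^c)$ appearing in \Cref{eq 0 in t:m:6} is a disjoint nonempty pair of subsets of $\mathcal V$, hence admissible in the maximization \Cref{eq max min op problem} that defines $\A(\mathcal{W})$, and taking the maximum over the strictly larger family of all disjoint pairs can only increase the value. So the entire content of the lemma is the reverse inequality $\A(\mathcal{W})\le z^*$ under the hypothesis $z^*\le 0$.

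The plan is to prove, by induction on the cardinality of the leftover set, that \emph{every} admissible disjoint pair $(V_1,V_2)$ satisfies $\Qu(V_1)+\Qu(V_2)\le z^*$; taking the maximum then gives $\A(\mathcal{W})\le z^*$. For a disjoint pair write $L:=\mathcal{V}\setminus(V_1\cup V_2)$. When $L=\emptyset$ the pair is complementary and the bound is exactly the definition of $z^*$. For the inductive step I would move a single node $k\in L$ into either $V_1$ or $V_2$, producing two pairs whose leftover set is $L\setminus\{k\}$ and to which the induction hypothesis applies.

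The key monotonicity tool is the identity $\beta_i^V=\sum_{j\in\mathcal V}w_{ji}-2\sum_{j\notin V}w_{ji}$ for $i\in V$, obtained by splitting $\sum_{j\in\mathcal V}w_{ji}$ according to \Cref{def of alpha beta}. It shows that, for fixed $i$, the quantity $\beta_i^V$ is nondecreasing as $V$ grows with $i\in V$, since enlarging $V$ only deletes nonnegative terms from $\sum_{j\notin V}w_{ji}$. Consequently, passing from $V_1$ to $V_1\cup\{k\}$ never lowers $\beta_i^{V_1}$ for the original indices $i\in V_1$, so $\Qu(V_1\cup\{k\})<\Qu(V_1)$ can happen only if the new index attains a strictly smaller value, i.e. $\beta_k^{V_1\cup\{k\}}<\Qu(V_1)$ and then $\Qu(V_1\cup\{k\})=\beta_k^{V_1\cup\{k\}}$. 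If instead $\beta_k^{V_1\cup\{k\}}\ge\Qu(V_1)$, then $\Qu(V_1\cup\{k\})\ge\Qu(V_1)$, the pair $(V_1\cup\{k\},V_2)$ has objective at least $\Qu(V_1)+\Qu(V_2)$ and a smaller leftover, and the induction hypothesis gives $\Qu(V_1)+\Qu(V_2)\le z^*$; the symmetric situation for $V_2$ is identical. Thus the induction closes at once unless the chosen $k$ strictly lowers the score of \emph{both} groups, i.e. $\beta_k^{V_1\cup\{k\}}<\Qu(V_1)$ and $\beta_k^{V_2\cup\{k\}}<\Qu(V_2)$ simultaneously.

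This ``doubly harmful node'' situation is the main obstacle, and it is precisely where the hypothesis $z^*\le0$ must be spent. Here I would combine the two bounds supplied by the induction hypothesis for the enlarged pairs, namely $\beta_k^{V_1\cup\{k\}}+\Qu(V_2)\le z^*$ and $\Qu(V_1)+\beta_k^{V_2\cup\{k\}}\le z^*$, and add them. Using the cancellation $\beta_k^{V_1\cup\{k\}}+\beta_k^{V_2\cup\{k\}}=2\bigl(w_{kk}-\sum_{j\in L\setminus\{k\}}w_{jk}\bigr)$, in which the contributions from $V_1$ and $V_2$ cancel and only the self-loop and the internal $L$-edges survive, this yields $\Qu(V_1)+\Qu(V_2)\le 2z^*-2\bigl(w_{kk}-\sum_{j\in L\setminus\{k\}}w_{jk}\bigr)$. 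The right-hand side collapses to $z^*$ exactly when the internal-deficit term $w_{kk}-\sum_{j\in L\setminus\{k\}}w_{jk}$ is at least $z^*/2$ (a nonpositive threshold). The genuine technical heart is therefore to guarantee that some node $k\in L$ meets this threshold: I expect to do so by an extremal/averaging argument over $L$, controlling the single-node scores through the complementary-pair bounds $\Qu(\{k\})+\Qu(\mathcal V\setminus\{k\})\le z^*$ and relating them to the deficits via $w_{kk}-\sum_{j\in L\setminus\{k\}}w_{jk}\ge\Qu(\{k\})$. This node-selection step is the part I anticipate requiring the most care, since individual deficits may be negative and only a carefully chosen node works; it is here that $z^*\le0$ is genuinely indispensable, consistent with the fact that the equality fails in general when $z^*>0$.
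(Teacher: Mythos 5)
Your overall architecture (induction on the size of the leftover set $L=\mathcal V\setminus(V_1\cup V_2)$, plus the monotonicity of $\beta^V_i$ in $V$ for fixed $i$, which is the paper's \Cref{l f i in V QuV le Quv} in disguise) matches the paper's proof, and everything up to the ``doubly harmful node'' case is correct, including your cancellation identity. The gap is exactly where you flag it, and it is not a routine technicality: you add two induction-hypothesis bounds, pairing $\beta_k^{V_1\cup\{k\}}$ with $\beta_k^{V_2\cup\{k\}}$, but these two sets jointly miss $L\setminus\{k\}$, so the residual $-2\sum_{j\in L\setminus\{k\}}w_{jk}$ survives with the wrong sign and you are left needing some $k\in L$ with $w_{kk}-\sum_{j\in L\setminus\{k\}}w_{jk}\ge z^*/2$. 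Nothing in your sketch forces this: the route via $w_{kk}-\sum_{j\in L\setminus\{k\}}w_{jk}\ge \Qu(\{k\})$ would require the \emph{lower} bound $\Qu(\{k\})\ge z^*/2$, whereas the complementary-pair inequality $\Qu(\{k\})+\Qu(\mathcal V\setminus\{k\})\le z^*$ only bounds $\Qu(\{k\})$ from above, and $\Qu(\{k\})=w_{kk}-\sum_{j\ne k}w_{jk}$ can be arbitrarily negative for a node with large weighted in-degree. If $L$ is internally dense, every $k\in L$ can be doubly harmful and fail the threshold simultaneously, and I do not see an averaging argument over $L$ that rescues the step.

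The paper closes this case by adding a \emph{different} pair of inequalities. It picks $i$ as a minimizer of $\beta^{V^c}$ over the full complement $V^c=V_2\cup L$ of $V_1$ (which also handles the subcase $i\in V_2$ by monotonicity), and in the bad case adds the complementary-pair bound $\Qu(V_1)+\Qu(V_1^c)\le z^*$ to the single induction bound $\Qu(V_1\cup\{i\})+\Qu(V_2)\le z^*$. Because $(V_1\cup\{i\})\cup V_1^c=\mathcal V$ and $(V_1\cup\{i\})\cap V_1^c=\{i\}$, the cancellation $\beta_i^{V_1\cup\{i\}}+\beta_i^{V_1^c}=2w_{ii}$ is exact --- all cross terms, including the edges internal to $L$, are absorbed into $V_1^c$ --- yielding $\Qu(V_1)+\Qu(V_2)\le 2z^*-2w_{ii}\le z^*$ using only $w_{ii}\ge0$ and $z^*\le0$. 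Replacing one of your two induction-hypothesis bounds by this complementary-pair bound is the missing ingredient; as written, the node-selection step is a genuine gap.
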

\begin{proof}
    See \Cref{Proof of t:m:6}. 
\end{proof}

We analyze the optimization problem given in \Cref{eq 0 in t:m:6}, which can be equivalently written as
\begin{equation}\label{eq int op problem}
\begin{aligned}
    \max_{a \in \{-1,1\}^n} \ \min_{b \in \{-1,0,1\}^n} \quad & a^T \mathcal{W} b \\
    \text{subject to} \quad & -n < \pmb{1}^T a < n, \\
                            &\pmb{1}^T b = 0, \\
                            & a^T b = 2, \\
                            & a_i b_i \ge 0, \quad \forall i \in \mathcal{V}.
\end{aligned}
\end{equation}

Here, the vector $a$ indicates whether each node in $\mathcal{V}$ belongs to $V$ or $V^c$. The condition $-n < \pmb{1}^T a < n$ corresponds to the requirement that $V \neq \emptyset$ and $V \neq \mathcal{V}$. 

Any vector $b$ satisfying the constraints must have exactly one entry equal to $1$ and one equal to $-1$, with all others being zero. Moreover, the positive and negative entries of $b$ must lie in the coordinates where $a$ takes values $1$ and $-1$, respectively.

\begin{Lemma}\label{t:m:7}
    The optimization problem \Cref{eq int op problem} and
    \begin{equation}\label{eq ld dual op problem}
        \begin{aligned}
            \max_{a \in \{-1,1\}^n,c\in \mathbb{R}^{n+2}} \quad & 2 c_2\\
            \text{subject to} \quad & -n < \pmb{1}^T a < n, \\
                            &\begin{bmatrix}
                                \pmb{1} & a & \operatorname{diag}(a)
                            \end{bmatrix} c = \mathcal{W}^Ta,\\
                            & c_i \ge 0, \quad \forall i\in \{3,4,\dots,n+2\},
        \end{aligned}
    \end{equation}
    attain the same optimal value.
\end{Lemma}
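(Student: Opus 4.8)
The plan is to prove \Cref{t:m:7} by recognizing the inner minimization in \Cref{eq int op problem} as a linear program (after a change of description of the feasible set) and then forming its Lagrangian dual, which should reproduce \Cref{eq ld dual op problem}. First I would fix an arbitrary feasible $a\in\{-1,1\}^n$ and analyze the inner problem $\min_b a^T\mathcal{W}b$ alone, treating the outer $\max_a$ as a pointwise wrapper: if for each admissible $a$ the inner optimum equals $2c_2^\star(a)$ where $c^\star(a)$ solves the LP in \Cref{eq ld dual op problem} for that $a$, then taking $\max_a$ on both sides yields the claimed equality of optimal values. So the real content is a per-$a$ linear programming duality statement.

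Next I would rewrite the inner minimization. As the paragraph preceding the lemma observes, the constraints on $b$ force exactly one $+1$ in a coordinate where $a_i=1$ and one $-1$ in a coordinate where $a_i=-1$; the objective $a^T\mathcal{W}b$ thus ranges over a finite set, but to dualize I would relax $b$ to its continuous/convex-hull description. The key step is to show that the polytope cut out by $\pmb{1}^Tb=0$, $a^Tb=2$, $a_ib_i\ge 0$, together with an implicit box $\|b\|_\infty\le 1$ (or, better, that the LP relaxation has integral optima at the vertices described), has the same minimum as the integer problem. I would write the inner LP in the standard form $\min_b a^T\mathcal{W}b$ subject to $\pmb{1}^Tb=0$, $a^Tb=2$, and $\operatorname{diag}(a)b\succeq 0$, assign dual variables $c_1$ to the equality $\pmb{1}^Tb=0$, $c_2$ to $a^Tb=2$, and $c_{3:n+2}\succeq 0$ to the sign constraints $\operatorname{diag}(a)b\succeq\pmb 0$, and compute the dual. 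The stationarity/feasibility condition of the dual is exactly $\begin{bmatrix}\pmb{1} & a & \operatorname{diag}(a)\end{bmatrix}c=\mathcal{W}^Ta$ with dual objective $2c_2$, matching \Cref{eq ld dual op problem}. I would invoke strong LP duality (the primal is feasible and bounded for each admissible $a$) to equate the two optimal values.

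The main obstacle will be justifying that no box constraint $\|b\|_\infty\le 1$ is needed — i.e. that relaxing $b\in\{-1,0,1\}^n$ to a continuous variable subject only to the three listed linear constraints does not change the minimum, so that the resulting LP is the correct one to dualize. I expect this to hold because $a^T\mathcal{W}b$ is linear and the combination of $\pmb 1^Tb=0$, $a^Tb=2$, and $\operatorname{diag}(a)b\succeq 0$ already confines the mass of $b^+$ and $b^-$ to total $1$ each on the appropriate coordinate blocks, so the minimum is attained at a vertex that is automatically integral with a single $\pm1$ pair; I would verify this by examining the vertices of the feasible polytope (or by an exchange argument showing any optimal $b$ can be pushed to such a vertex without increasing the objective). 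Once integrality of the LP optimum is established, strong duality closes the argument, and the outer $\max_a$ transfers verbatim to \Cref{eq ld dual op problem}.
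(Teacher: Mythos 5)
Your proposal is correct and follows essentially the same route as the paper's proof: fix $a$, relax the inner integer minimization to a continuous LP over $\pmb{1}^Tb=0$, $a^Tb=2$, $\operatorname{diag}(a)b\succeq\pmb{0}$, observe that these constraints already confine $b$ to a product of simplices so the relaxation is tight at an integral vertex with a single $\pm 1$ pair (no box constraint needed), and then invoke strong LP duality to obtain \Cref{eq inner ld dual op problem} before transferring the outer maximization. The paper carries out your ``exchange/vertex'' step as an explicit inequality $a^T\mathcal{W}b \ge \min_{i\in V^+}(a^T\mathcal{W})_i - \max_{i\in V^-}(a^T\mathcal{W})_i$, but this is the same argument.
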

\begin{proof}
    See \Cref{Proof of t:m:7}. 
\end{proof}

\begin{Theorem}\label{t:m:8}
    Let $z^\dagger$ denote the optimal value of the optimization problem shown in \Cref{eq ld dual op problem}. Then $ \A(\mathcal{W}) \ge z^\dagger$, and equality holds if $z^\dagger \le 0$.
\end{Theorem}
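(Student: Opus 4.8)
The plan is to recognize that the theorem is a short corollary of the machinery already assembled, so the real task is to match up the optimal values of the three programs \Cref{eq 0 in t:m:6}, \Cref{eq int op problem}, and \Cref{eq ld dual op problem}. Writing $z^*$ for the optimal value of \Cref{eq 0 in t:m:6}, \Cref{t:m:6} already gives $\A(\mathcal{W})\ge z^*$, with equality when $z^*\le 0$. Hence it suffices to prove the single identity $z^*=z^\dagger$, after which the inequality and the sign condition guaranteeing equality transfer verbatim from $z^*$ to $z^\dagger$.

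I would obtain $z^*=z^\dagger$ in two steps. First I would confirm that $z^*$ equals the optimal value of the max--min integer program \Cref{eq int op problem}, the equivalence stated just before the theorem. Encoding a nontrivial bipartition $(V,V^c)$ by $a\in\{-1,1\}^n$ with $a_i=1\iff i\in V$, a direct computation gives $\sum_{j\in V} w_{ji}-\sum_{j\in\mathcal{V}\setminus V} w_{ji}=(\mathcal{W}^T a)_i$, so that $\Qu(V)=\min_{i:\,a_i=1}(\mathcal{W}^T a)_i$ and $\Qu(V^c)=-\max_{i:\,a_i=-1}(\mathcal{W}^T a)_i$. Since the constraints force every feasible $b$ to have exactly one $+1$ and one $-1$ entry, placed in coordinates where $a$ equals $+1$ and $-1$ respectively, each such $b$ has the form $e_k-e_l$ with $a_k=1$ and $a_l=-1$, whence $a^T\mathcal{W}b=(\mathcal{W}^T a)_k-(\mathcal{W}^T a)_l$ and therefore $\min_b a^T\mathcal{W}b=\Qu(V)+\Qu(V^c)$. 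Taking the outer maximum over $a$ identifies \Cref{eq int op problem} with \Cref{eq 0 in t:m:6} at the common value $z^*$. Second, \Cref{t:m:7} states that \Cref{eq int op problem} and \Cref{eq ld dual op problem} attain the same optimum; since $z^\dagger$ is by definition the optimum of \Cref{eq ld dual op problem}, this yields $z^*=z^\dagger$.

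With $z^*=z^\dagger$ in hand, substituting into \Cref{t:m:6} immediately gives $\A(\mathcal{W})\ge z^\dagger$, with equality whenever $z^\dagger\le 0$, completing the argument. I do not anticipate a genuine obstacle, since all the analytic content has already been discharged in \Cref{t:m:6} (which rests on \Cref{t:m:3}) and in \Cref{t:m:7}. The only points demanding care are bookkeeping: verifying that the feasible-set description of $b$ is exactly right so that the inner minimum reproduces $\Qu(V)+\Qu(V^c)$, and noting that the scalar equality $z^*=z^\dagger$ preserves both the direction of the inequality and the threshold ``$\le 0$''.
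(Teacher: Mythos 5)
Your proposal is correct and follows essentially the same route as the paper, which simply declares \Cref{t:m:8} a direct corollary of \Cref{t:m:5}, \Cref{t:m:6}, and \Cref{t:m:7}; you additionally spell out the equivalence between \Cref{eq 0 in t:m:6} and \Cref{eq int op problem} that the paper only asserts in the surrounding text, and that verification is accurate.
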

\begin{proof}
    This is a direct corollary of \Cref{t:m:5}, \Cref{t:m:6}, and \Cref{t:m:7}.
\end{proof}

\Cref{t:m:8} shows that $\A(\mathcal{W})$ can be computed using \Cref{eq ld dual op problem} only if $z^\dagger \leq 0$. However, this condition does not affect consensus determination or consensus time estimation, as we can always determine the sign of $\A(\mathcal{W})$ based on the sign of $z^\dagger$, and the equality holds when $z^\dagger \leq 0$.

Thus, we have simplified the computation of $\A(\mathcal{W})$ to solving a mixed integer programming problem, provided that the optimal value of \Cref{eq ld dual op problem} is not greater than $0$. Experimental results show that by using the branch-and-bound method with parallel computing, problems of size $n = 45$ can be solved within 5 minutes on a personal computer. Detailed experimental data can be found in \Cref{table for run time}.

\begin{table}[htbp]
\centering
\caption{Runtime Comparison Between \Cref{eq max min op problem} and \Cref{eq ld dual op problem}}
\label{table for run time}
\begin{tabular}{c|c|c}
\toprule
\textbf{Problem Size ($n$)} & \textbf{\Cref{eq max min op problem} (s)}  & \textbf{\Cref{eq ld dual op problem} (s)} \\
\midrule
5  & 0.001 & 0.007 \\
10  & 0.022 & 0.019 \\
15  & 7.926 & 0.075 \\
20  & - & 0.318 \\
30  & - & 5.385 \\
40  & - & 113.732 \\
\bottomrule
\end{tabular}

\begin{tablenotes}
\small
\item \textit{All experiments were performed on an 11th Gen Intel\textsuperscript{\textregistered} Core\texttrademark{} i9-11900K @ 3.50GHz processor using a single CPU core. }
    
\item \textit{All values represent average runtimes over 20 independent runs.}

\item \textit{``-'' indicates that the computation did not finish within 30 minutes.}
    
\item \textit{The mixed integer program in \Cref{eq ld dual op problem} is implemented using \texttt{gurobipy}.}
\end{tablenotes}
\end{table}

\section{Modeling of Opinion Dynamics and Interpretation of Notation}\label{s:Modeling of Opinion Dynamics and Interpretation of Notation}
Each individual's opinion is a real number in $\mathbb{R}$. Social interactions are modeled by a weighted directed graph $\mathcal{G} = (\mathcal{V}, \mathcal{E}, \mathcal{W})$, where $\mathcal{V}$ is the set of individuals. A directed edge $(j, i) \in \mathcal{E}$ means individual $i$ can receive information from individual $j$. The weight $w_{ji}$ represents the degree of trust individual $i$ places in individual $j$, and $w_{ji} > 0$ if and only if $(j, i) \in \mathcal{E}$. The diagonal element $w_{ii}$ (i.e., a self-loop) reflects the maximal strength of individual $i$’s subjective judgment or external, unmodeled influences. 

We model opinion dynamics using \Cref{e:c:3}, referred to as the Comparative-Only Model. In this model, communication does not reveal exact opinions, and individuals may not have precise knowledge of their own. Instead, they can only make comparisons and update their opinions accordingly, moving toward those of individuals they trust more. 

This model characterizes an individual’s imprecise perception of their own opinion, which becomes gradually clearer through interactions and comparisons with others. And individuals may avoid fully expressing their views to prevent conflict or being seen as outliers. As a result, extreme opinions may not be fully revealed; communication may only allow individuals to infer who holds a more extreme or more moderate opinion. This resembles situations where individuals express merely ``approve'' or ``disapprove,'' offering little indication of the intensity behind their stance.

The term $d(\cdot,\cdot)$ in \Cref{e:c:3} reflects an individual’s subjective judgment or external, unmodeled influences. It may be a random function to capture the variability of an individual’s subjective judgment, such as shifts in attitude under different emotions, or as \Cref{d for fixed influence} to represent stably influenced by fixed information sources (e.g., books or media) or by persistent expectations.
\begin{equation}\label{d for fixed influence}
    d_i(t,x) = \sum_{j = 1}^{m}w'_{ji} \operatorname{sign}(s_j-x_i(t))
\end{equation}
where $s_j \in \mathbb{R}$ represents fixed external opinions or persistent expectations, $w'_{ji} \ge 0$ denotes their influence on individual $i$, and the total influence satisfies $\sum_{j=1}^{m} w'_{ji} \le w_{ii}$. 

A special case of \Cref{d for fixed influence} is \Cref{d for Ex 6},
\begin{equation}\label{d for Ex 6}
    d_i(t,x) = w_{ii} \operatorname{sign}(x_i(0)-x_i(t)),
\end{equation}
which indicates that each individual seeks to preserve their initial opinion, corresponding to the stubborn agents in 
\cite{tian2021social}, with $w_{ii}$ quantifying their degree of stubbornness. \Cref{fig Ex6} shows an experiment and illustrates: 1. groups with weaker internal connections tend to exhibit disagreement; 2. more stubborn groups (larger $w_{ii}$) are more likely to preserve their initial opinions.

\begin{figure}[htbp]
    \centering  
    \subfigure[Graph structure.]{\label{fig Ex6 G}\includegraphics[width=0.47\linewidth]{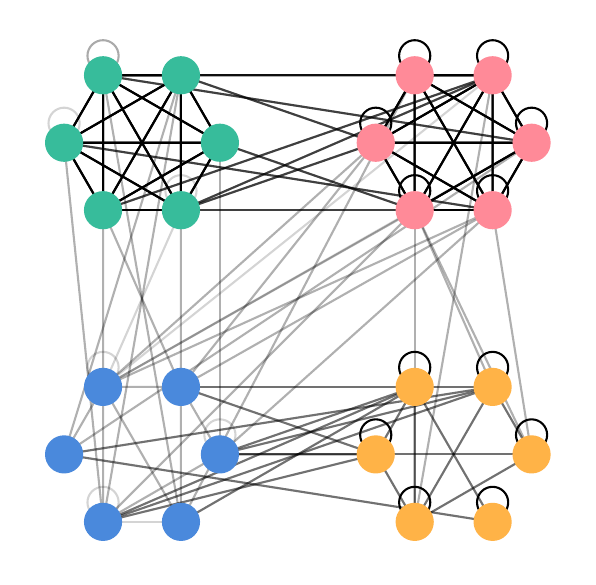}}
    \subfigure[Simulation result.]{\label{fig Ex6 xt}\includegraphics[width=0.48\linewidth]{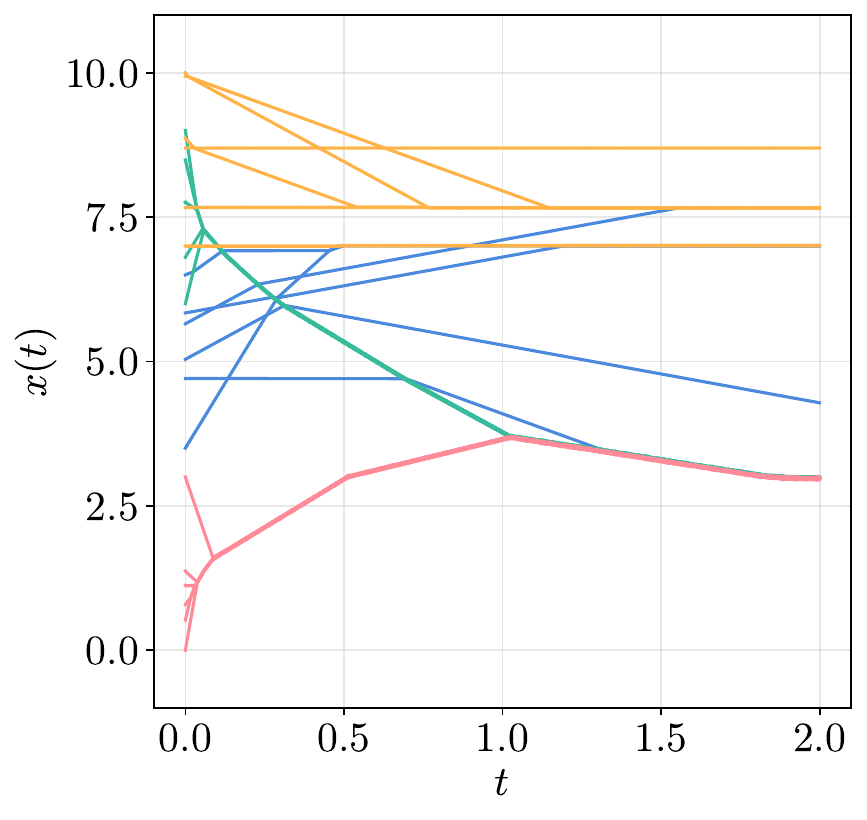}}
    \caption{Graph structure and simulation results of system \Cref{e:c:3} with $d(\cdot,\cdot)$ from \Cref{d for Ex 6}. In \Cref{fig Ex6 G}, darker edges indicate larger weights. The figure shows four communities: the pink and yellow communities are more stubborn (larger $w_{ii}$), while the green and pink communities have strong internal connectivity. The blue community had initial opinions in $[3.5, 6.5]$, which by $t = 2$ s spread to $[3, 7.6]$. The green community had initial opinions in $[6, 9]$, which by $t = 2$ s converged to $3$. The pink community had initial opinions in $[0, 3]$, which by $t = 2$ s converged to $3$. The yellow community had initial opinions in $[7, 10]$, which by $t = 2$ s narrowed to $[7, 8.7]$. Only the green and pink communities achieved internal consensus, and only the individuals in the pink and yellow communities kept their opinions within their initial opinion intervals.}
    \label{fig Ex6}
\end{figure}

\subsection{$\Qu$ and $\A$ Revisited: Community-Based Conditions for Consensus}

By \Cref{t:m:4}, dissensus arises when $\A \ge 0$. We now explain the meaning of $\Qu(V)$ and $\A$ in the context of opinion dynamics, and show that the absence of Strong Communities \cite{radicchi2004defining} is a sufficient condition for Strong Consensus, while the absence of Satisfactory Partitions \cite{bazgan2006satisfactory} is a necessary condition.

In the definition of $\Qu(V)$, 
$\sum_{j\in V} w_{ji} -\sum_{j\in \mathcal{V}\backslash V} w_{ji}$ represents the difference between two basic vertex community variables: the Internal Strength and External Strength \cite{fortunato2016community}. The Internal Strength of node $i$ is $\sum_{j \in V} w_{ji}$, the total weighted in-degree from nodes in $V$, while the External Strength is $\sum_{j \in \mathcal{V} \setminus V} w_{ji}$, the total weighted in-degree from nodes outside $V$.

An LS-set \cite{luccio1969decomposition}, or Strong Community \cite{radicchi2004defining}, is defined as a subgraph where every node of this subgraph has a higher Internal than External Strength.
\begin{Definition}[Strong Community \cite{radicchi2004defining}]
    Let $\mathcal{G} = (\mathcal{V}, \mathcal{E}, \mathcal{W})$. A Strong Community is a subset $V \subset \mathcal{V}$ such that every node in $V$ has higher Internal than External Strength, i.e.,
    \begin{equation}
    \sum_{j \in V} w_{ji} > \sum_{j \in \mathcal{V} \setminus V} w_{ji}, \qquad \forall i \in V.
    \end{equation}
\end{Definition}

Therefore, $\Qu(V) > 0$ holds if and only if $V$ is a Strong Community. Compared to the binary classification of whether a set $V$ is a Strong Community, $\Qu(V)$ provides a continuous measure of community strength. \Cref{fig for evolution of a weighted} illustrates this well: as $\Qu(V_1)$ and $\Qu(V_2)$ increase, both $V_1$ and $V_2$ tend to exhibit stronger internal connectivity and weaker external connectivity. Light blue and light red nodes highlight the key nodes within $V_1$ and $V_2$, respectively, i.e., the ones belonging to $\arg\min_{i\in V}[\sum_{j\in V} w_{ji} -\sum_{j\in \mathcal{V}\backslash V} w_{ji}]$ where $\arg\min_{x}f(x) = \{x\mid f(x) = \min_{x'} f(x')\}$. For any group, its key nodes are those that critically influence the group’s cohesion and capacity for independent judgment. These nodes are the most susceptible within the group to external influence, and thus have the greatest potential to separate from the group or affect the group’s independence. Strengthening internal connections to key nodes while weakening external influences on them enhances the group's cohesion and capacity for independent judgment. This process is illustrated step by step in \Cref{fig for evolution of a weighted}.

\begin{figure}[htbp]
\centerline{\includegraphics[width=0.9\linewidth]{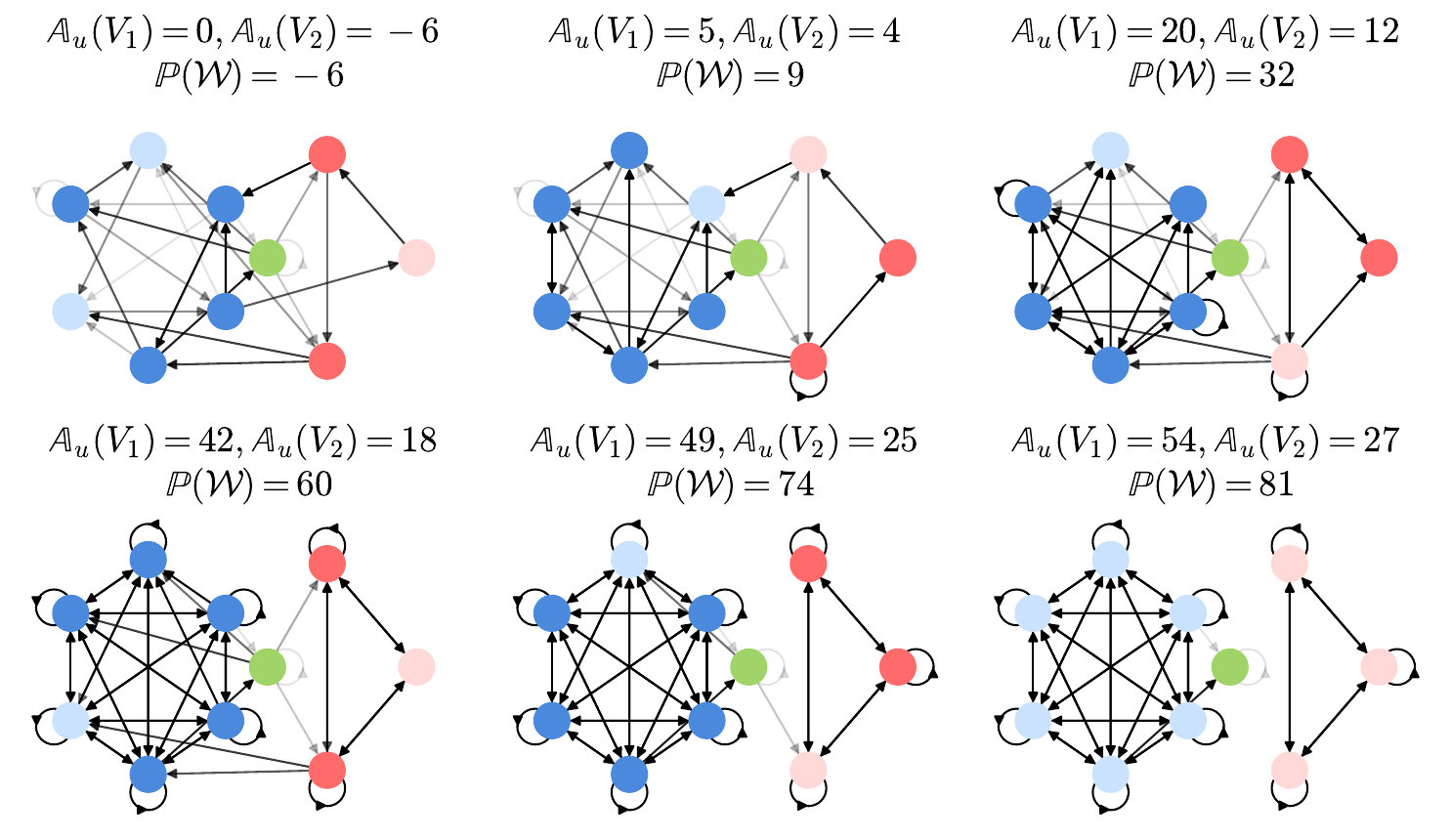}}
    \caption{Evolution of a weighted directed graph with $n = 10$ nodes and edge weights in $\{0, 1, \dots, 9\}$, showing a sequence of modifications that increase the value of $\A(\mathcal{W})$. Edge darkness indicates weight magnitude (Darker edges = larger weights). Each graph illustrates a pair of disjoint subsets $(V_1, V_2)$ that maximize $\Qu(V_1) + \Qu(V_2)$. Blue and red nodes represent the elements of $V_1$ and $V_2$, respectively, while green nodes belong to neither set. Light blue and light red nodes highlight the key nodes within $V_1$ and $V_2$, respectively, i.e., the ones that attain the minimum in the definition of $\Qu(\cdot)$. As the internal connectivity within each of the subgraphs $V_1$ and $V_2$ becomes stronger, and their connections to the rest of the network become weaker, both $\Qu(V_1)$, $\Qu(V_2)$, and $\A(\mathcal{W})$ increase.}
    \label{fig for evolution of a weighted}
\end{figure}

\begin{Corollary}\label{t:sufficient condition}
    A sufficient condition for system \Cref{e:c:3} to achieve Strong Consensus is that the graph $\mathcal{G}$ contains no Strong Communities.
\end{Corollary}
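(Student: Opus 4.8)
The plan is to prove the statement through a forward argument that chains together the two characterizations already in hand. By \Cref{t:m:4}, the system achieves Strong Consensus if and only if $\A(\mathcal{W}) < 0$, so the entire task reduces to showing that the absence of Strong Communities forces $\A(\mathcal{W}) < 0$. To access $\A(\mathcal{W})$ combinatorially I would invoke the max--min reformulation of \Cref{t:m:5}, namely that $\A(\mathcal{W})$ equals the maximum of $\Qu(V_1) + \Qu(V_2)$ over disjoint nonempty $V_1, V_2 \subset \mathcal{V}$, together with the arithmetic form of $\Qu$ in \Cref{eq in t:m:5}.

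First I would record the elementary but crucial observation that in the optimization defining $\A(\mathcal{W})$ the two competitors are automatically proper subsets: if $V_1 = \mathcal{V}$ then disjointness forces $V_2 \subset \mathcal{V} \setminus V_1 = \emptyset$, contradicting $V_2 \neq \emptyset$, and symmetrically for $V_2$. Next I would use the equivalence established just above the statement, that $\Qu(V) > 0$ holds exactly when $V$ is a Strong Community. Hence ``$\mathcal{G}$ contains no Strong Communities'' translates precisely into $\Qu(V) \le 0$ for every nonempty proper $V \subset \mathcal{V}$. Feeding this into \Cref{t:m:5} and the previous observation yields $\Qu(V_1) + \Qu(V_2) \le 0$ for every admissible pair, and therefore $\A(\mathcal{W}) \le 0$.

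The main obstacle is to upgrade the weak inequality $\A(\mathcal{W}) \le 0$ to the strict inequality $\A(\mathcal{W}) < 0$ demanded by \Cref{t:m:4}, that is, to rule out the boundary case $\A(\mathcal{W}) = 0$. Because every $\Qu(V) \le 0$, the value $\A(\mathcal{W}) = 0$ can only be attained by disjoint sets $V_1, V_2$ with $\Qu(V_1) = \Qu(V_2) = 0$, a configuration that the strict Strong Community condition does not immediately exclude. My approach here would be to analyze an optimizing pair more finely and to try to show that the coexistence of two disjoint ``balanced'' blocks (each attaining $\Qu = 0$) manufactures a genuine Strong Community elsewhere in $\mathcal{G}$, the natural candidate being a merge such as $V_1 \cup V_2$ or a greedily enlarged block, exploiting the in-degree bookkeeping $\mathrm{in}_{V_1^c}(i) = \mathrm{in}_{V_2}(i) + \mathrm{in}_{(V_1 \cup V_2)^c}(i)$ for $i \in V_1$. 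I expect this strict-negativity step to be the genuinely delicate part of the argument: the merge identity only yields $\Qu(V_1 \cup V_2) \ge 0$ rather than the strict positivity that certifies a Strong Community, so I would want to check carefully whether the passage from $\le 0$ to $< 0$ holds for arbitrary weighted digraphs or whether it quietly relies on an auxiliary nondegeneracy assumption excluding isolated or perfectly balanced vertices.
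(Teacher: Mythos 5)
Your route is exactly the paper's: invoke \Cref{t:m:4} to reduce Strong Consensus to $\A(\mathcal{W})<0$, invoke \Cref{t:m:5} to express $\A(\mathcal{W})$ as $\max[\Qu(V_1)+\Qu(V_2)]$ over disjoint nonempty pairs, and use the equivalence ``$\Qu(V)>0$ iff $V$ is a Strong Community.'' Up to the weak inequality $\A(\mathcal{W})\le 0$ your argument is complete and correct. The step you flag as delicate is precisely the step the paper elides: its proof asserts that the absence of Strong Communities gives $\Qu(V)<0$ for all nonempty $V$, whereas the strict inequality in the definition of a Strong Community only yields $\Qu(V)\le 0$ upon negation. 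So you have not missed a trick that the paper uses --- there is no further argument there to reproduce.

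Your suspicion that the boundary case is not vacuous is justified. Take $n=2$ with $\mathcal{W}=\pmb{0}$: no nonempty $V$ has any node with Internal Strength strictly exceeding External Strength (both are $0$), so $\mathcal{G}$ contains no Strong Communities, yet $\Qu(\{1\})=\Qu(\{2\})=0$, hence $\A(\mathcal{W})=0$ and, by \Cref{t:m:4}, Strong Consensus fails (indeed $\dot{x}\equiv\pmb{0}$, so distinct initial opinions persist forever). More generally, any pair of ``perfectly balanced'' disjoint blocks with $\Qu(V_1)=\Qu(V_2)=0$ produces $\A(\mathcal{W})=0$ without producing a Strong Community, and your observation that merging only gives $\Qu(V_1\cup V_2)\ge 0$ (never the strict positivity a Strong Community requires) shows that no repair along those lines can succeed. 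The conclusion is that the corollary as stated needs either a nondegeneracy hypothesis ruling out the case $\A(\mathcal{W})=0$, or a reading of ``Strong Community'' with non-strict inequality ($\Qu(V)\ge 0$), under which your argument closes immediately: absence of such communities gives $\Qu(V)<0$ for every nonempty $V$, and the finite maximum in \Cref{t:m:5} is then strictly negative. You should state explicitly which of these fixes you adopt rather than leave the strict-inequality step open.
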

\begin{proof}
    If the graph $\mathcal{G}$ contains no Strong Communities, then $\Qu(V) < 0$ for all nonempty $V\subset \mathcal{V}$. The conclusion follows directly from \Cref{t:m:4} and \Cref{t:m:5}.
\end{proof}

Another related concept is the Satisfactory Partition Problem \cite{bazgan2006satisfactory}, which asks whether a graph can be divided into two disjoint nonempty subsets $V_1$ and $V_2$ such that both are Strong Communities. 

\begin{Corollary}\label{t:necessary condition}
     A necessary condition for system \Cref{e:c:3} to achieve Strong Consensus is that the graph $\mathcal{G}$ has no Satisfactory Partition.
\end{Corollary}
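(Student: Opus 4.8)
The plan is to argue by contraposition: I would show that if $\mathcal{G}$ admits a Satisfactory Partition, then the system cannot achieve Strong Consensus. The bridge between the combinatorial notion (Satisfactory Partition) and the analytic consensus criterion is the characterization of $\A(\mathcal{W})$ supplied by \Cref{t:m:4} and \Cref{t:m:5}, together with the observation already recorded in the surrounding discussion that $\Qu(V) > 0$ holds precisely when $V$ is a Strong Community.

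First I would unpack the definition: a Satisfactory Partition furnishes two disjoint nonempty subsets $V_1, V_2 \subset \mathcal{V}$ with $V_1 \cup V_2 = \mathcal{V}$ such that each $V_k$ is a Strong Community. By the definition of $\Qu$ in \Cref{eq in t:m:5}, being a Strong Community is equivalent to $\Qu(V_k) = \min_{i \in V_k}[\sum_{j \in V_k} w_{ji} - \sum_{j \in \mathcal{V}\setminus V_k} w_{ji}] > 0$, since the minimum is strictly positive exactly when every node's Internal Strength strictly exceeds its External Strength.

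Next I would feed this pair into the max--min formula for $\A(\mathcal{W})$ in \Cref{eq max min op problem}. The pair $(V_1, V_2)$ is disjoint and nonempty, hence feasible in that maximization; note that a partition is a special case of a disjoint nonempty pair, so the extra condition $V_1 \cup V_2 = \mathcal{V}$ is not needed for feasibility. Consequently $\A(\mathcal{W}) \ge \Qu(V_1) + \Qu(V_2) > 0$. Finally, \Cref{t:m:4} states that Strong Consensus is equivalent to $\A(\mathcal{W}) < 0$; since we have shown $\A(\mathcal{W}) > 0$, Strong Consensus fails, which establishes the contrapositive and hence the claim.

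There is no substantive obstacle here, as the statement is essentially a dictionary translation through the two already-established equivalences. The only points warranting care are confirming that the sign characterization relating $\Qu(V)>0$ to the Strong Community condition holds with the correct strict inequalities, and checking that the feasibility set of \Cref{eq max min op problem} is genuinely weaker than requiring a full partition, so that the Satisfactory Partition's pair is admissible as a lower bound for $\A(\mathcal{W})$.
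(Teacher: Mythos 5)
Your proposal is correct and follows essentially the same route as the paper's own proof: take the Satisfactory Partition $(V_1,V_2)$, observe $\Qu(V_1)>0$ and $\Qu(V_2)>0$, use \Cref{t:m:5} to get $\A(\mathcal{W})\ge\Qu(V_1)+\Qu(V_2)>0$, and conclude via \Cref{t:m:4}. The additional checks you flag (strictness of the inequality in the Strong Community characterization and feasibility of a full partition as a disjoint pair) are both valid and handled implicitly in the paper.
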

\begin{proof}
    Suppose the graph $\mathcal{G}$ admits a Satisfactory Partition $(V_1,V_2)$.
    
    Then $\Qu(V_1) > 0$, $\Qu(V_2) > 0$ and by \Cref{t:m:5}, we have 
    \begin{equation}
        \A \geq \Qu(V_1) + \Qu(V_2) > 0.
    \end{equation}
    
    Thus, by \Cref{t:m:4}, Strong Consensus is unachievable.
\end{proof}

$\Qu(V_1)+\Qu(V_2)$ serves as a modularity-like measure, analogous to that introduced in \cite{newman2006modularity}, for evaluating the strength of a partition $(V_1, V_2)$ of the graph, where higher values indicate denser intra-community and sparser inter-community connections. If a pair $(V_1, V_2)$ satisfies $\Qu(V_1) + \Qu(V_2) = \A$, then the graph’s optimal division is $(V_1, V_2, V_3)$ if $V_3 = (V_1 \cup V_2)^c$ is nonempty, and $(V_1, V_2)$ otherwise. Here, $\A$ represents the supremum of such partition scores for a given graph: it tends to be large when the graph exhibits two well-separated communities, and small otherwise. \Cref{fig for evolution of a weighted} and \Cref{fig for Several unweighted undirected graphs} illustrate this for weighted directed and unweighted undirected graphs, respectively.

\begin{figure}[htbp]
    \centerline{\includegraphics[width=1\linewidth]{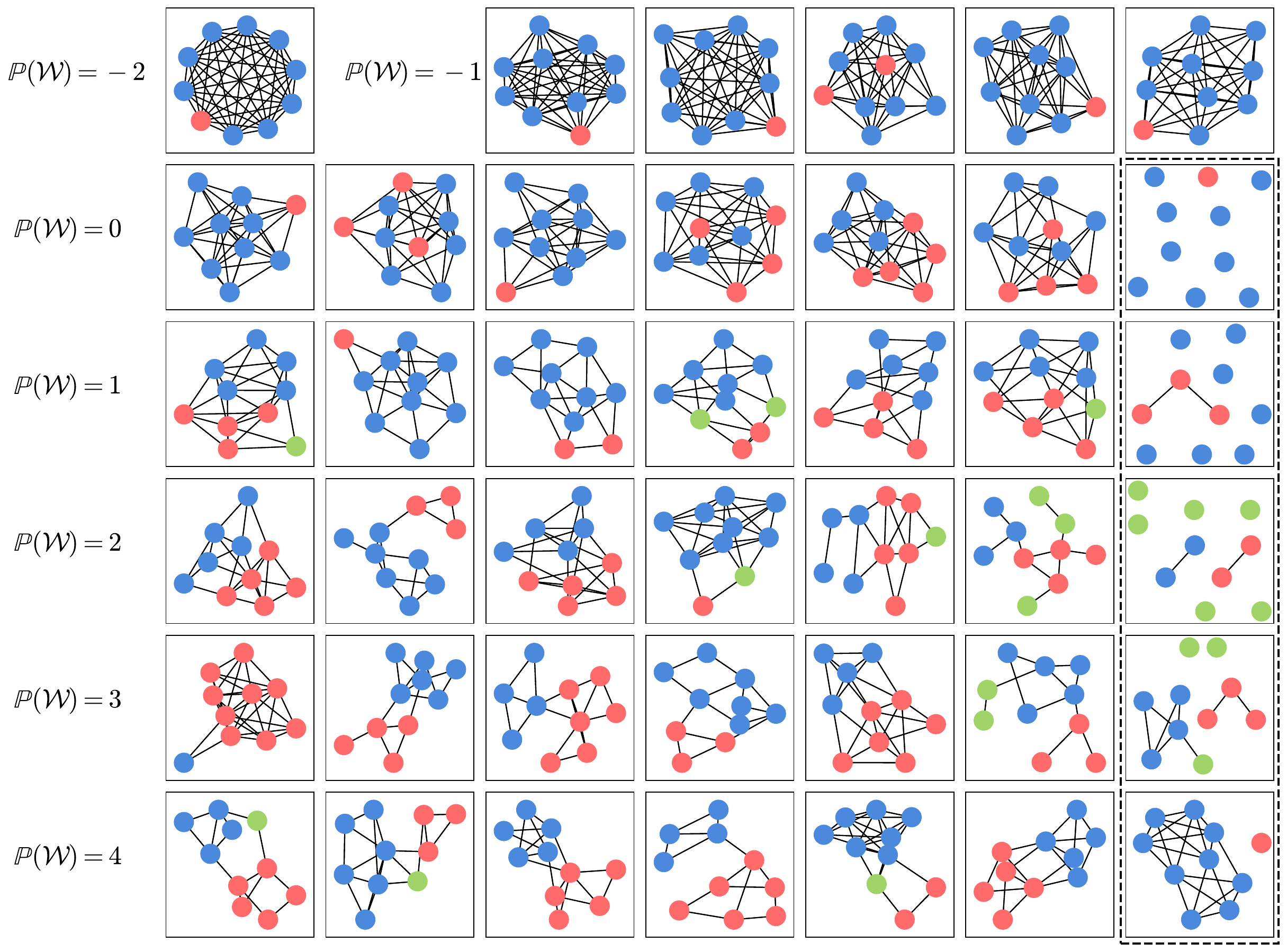}}
    \caption{Several unweighted, undirected graphs $\mathcal{W}\in \{0,1\}^{10\times 10}$ with $n = 10$ nodes, all without self-loops, each corresponding to a randomly generated graph with $\A(\mathcal{W})$ ranging from $-2$ to $4$. Each subgraph illustrates a pair of sets $(V_1, V_2)$ that maximizes $\Qu(V_1) + \Qu(V_2)$. Nodes in $V_1$ are colored blue, nodes in $V_2$ red, and nodes in $V_3 = (V_1 \cup V_2)^c$ green. The figures enclosed in the dashed box on the right are disconnected graphs, while the others are connected graphs. As $\A(\mathcal{W})$ increases, the connectivity within each of the subgraphs $V_1$ and $V_2$ becomes denser, while the connections between these subgraphs and the rest of the network become sparser, indicating a more pronounced community structure. }
    \label{fig for Several unweighted undirected graphs}
\end{figure}

Between the sufficient condition in \Cref{t:sufficient condition} and necessary condition in \Cref{t:necessary condition} lies the case where a Strong Community $V$ exists but no Satisfactory Partition can be found. In such cases, Strong Consensus may still be possible. An example is a music band and its fanbase: suppose the band members form a tightly connected Strong Community and their connections to fans are mostly one-directional (from band to fans). Even if the band’s opinions are only weakly influenced by the fanbase, their opinions can quickly spread throughout the entire fanbase, allowing the band and its fanbase as a whole to reach consensus. What hinders consensus is the presence of two internally cohesive but externally loosely connected subgroups, such as opposing political parties.

\subsection{Unpredictable Collective Dynamics}

The system exhibits a striking phenomenon: as shown in \Cref{Example 1 xt 1}, the opinions of agents $1$–$3$ evolve in a coordinated pattern and collectively drift toward increasing extremity in the negative direction, even though none of the agents initially held such extreme opinions (not shown in the figure, but their opinions would continue to decrease at a rate of $-\frac{1}{3}$, eventually falling below $0$). Such behavior reflects a form of collective confidence: external affirmation strengthens individual confidence, while reciprocal affirmation within the group endows the group itself with the capacity to act spontaneously against external influences. 

Moreover, as demonstrated in \Cref{Example 1}, small variations in the behavior of any individual can affect the collective trajectory, and the resulting effect is unpredictable. This phenomenon arises from the non-uniqueness of solutions in continuous-time systems. Each numerical simulation necessarily approximates some solution within the solution set of the continuous system; however, the solution is not unique, the particular trajectory to which a given simulation converges cannot be predetermined. 

\section{Illustrative Examples}\label{s:Illustrative Examples}

\subsection{Extension for \Cref{Example 1}}\label{ss:s:1}

\begin{figure}[htbp]
    \centering  
    \subfigure[Graph structure]{\raisebox{0.05\height}{\label{fig Example 3.1}\includegraphics[width=0.36\linewidth]{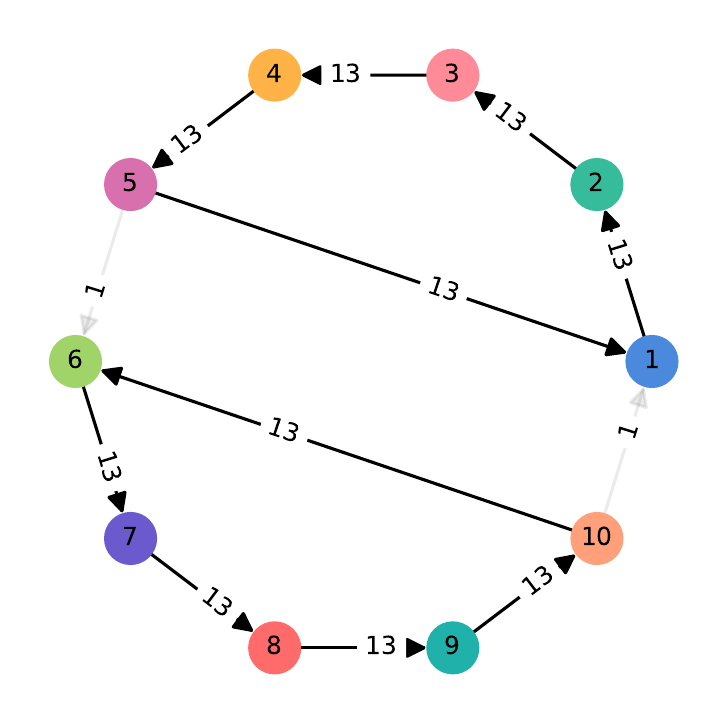}}}
    \hspace{-0.016\linewidth}
    \subfigure[{Simple Euler method, time steps $2^{-10}$ s; achieves consensus.}]{\label{fig Example 3.2}\includegraphics[width=0.36\linewidth]{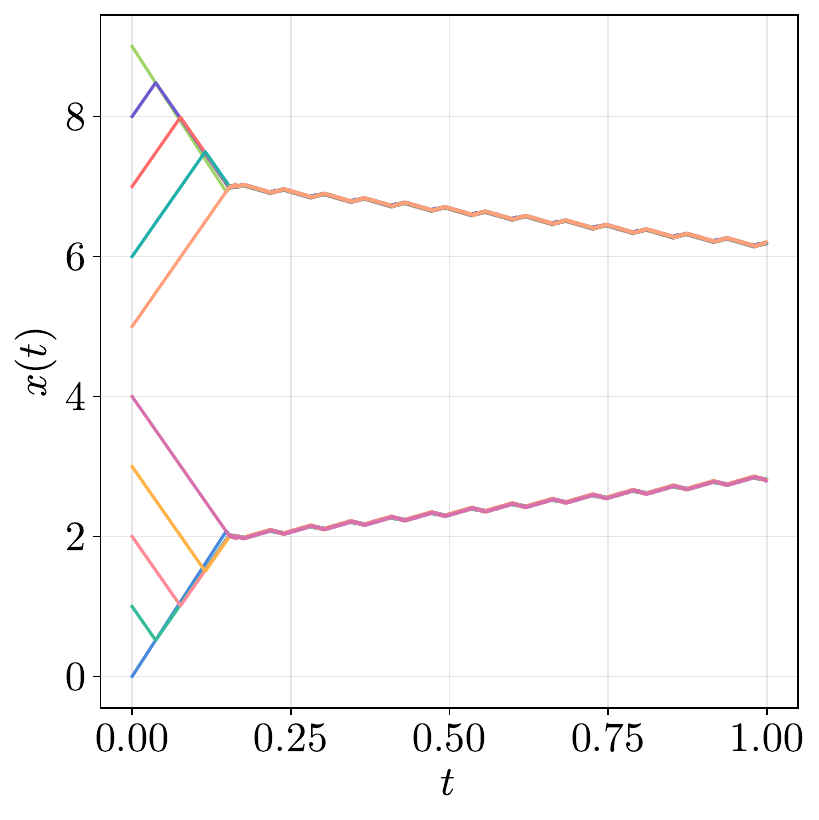}}
    \subfigure[{Runge–Kutta 4 method, time steps $2^{-10}$ s; exhibits dissensus.}]{\label{fig Example 3.3}\includegraphics[width=0.36\linewidth]{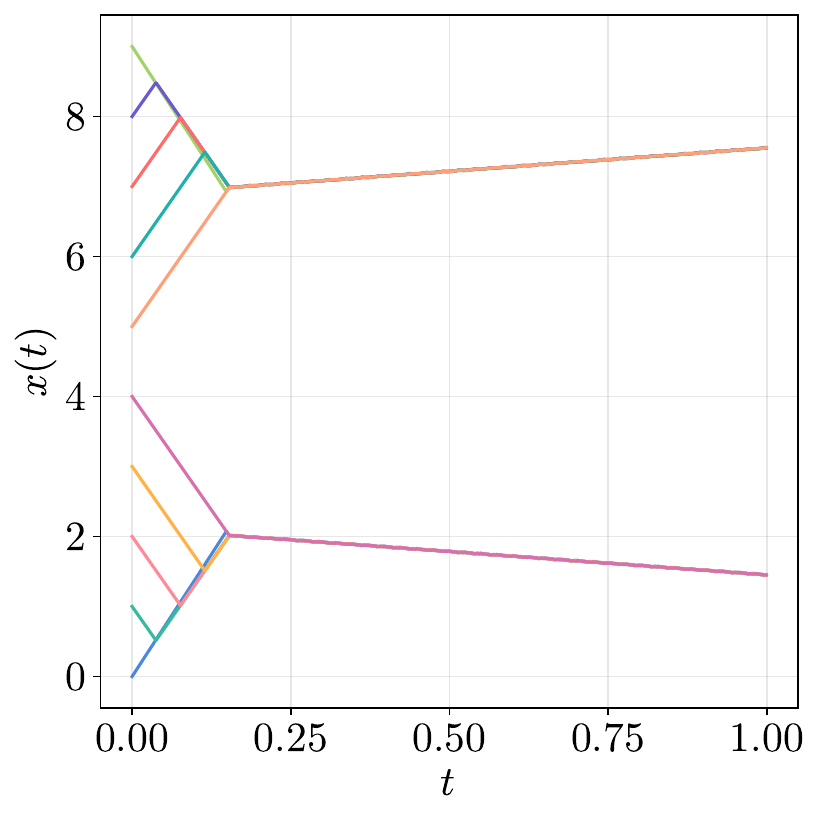}}
    \subfigure[{Runge–Kutta 4 method, time steps $2^{-20}$ s; exhibits dissensus.}]{\label{fig Example 3.4}\includegraphics[width=0.36\linewidth]{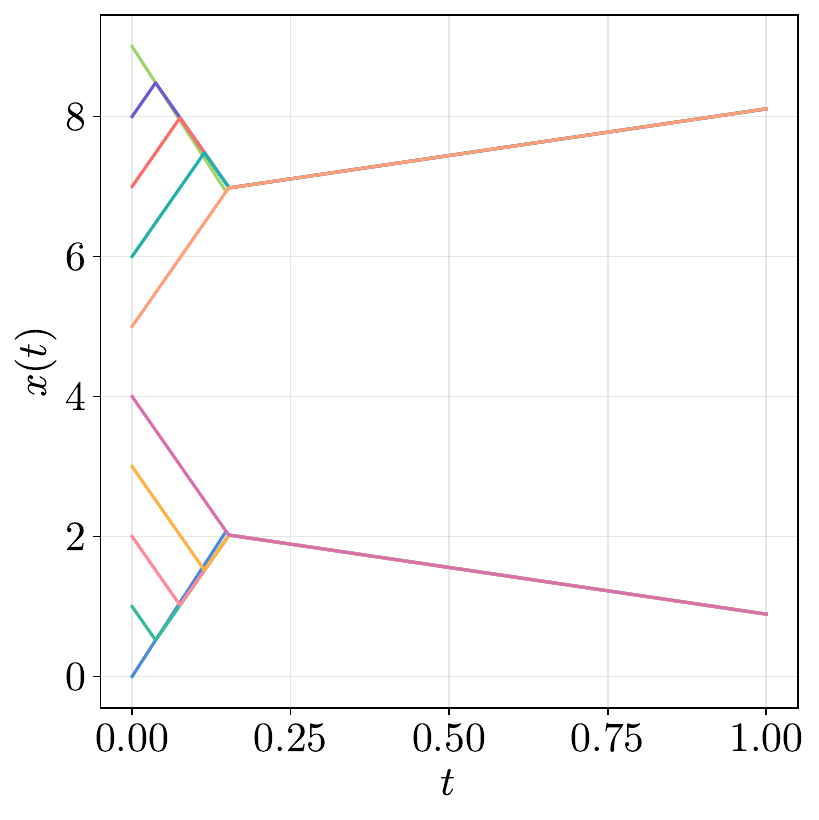}}
    \caption{Graph structure and simulation results under different conditions in \Cref{ss:s:1}, with initial value $x(0) = [0,1,2,3,4,9,8,7,6,5]^T$. Experiments show that when $\A > 0$, whether the simulation reaches consensus depends on the choice of parameters. Using higher-order solvers or smaller time steps does not guarantee consensus.}
    \label{fig Example 3}
\end{figure}

This example shows that even without disturbances and with a strongly connected graph, $\A(\mathcal{W})$ can be positive, allowing dissensus solutions. Simulations confirm their existence, and neither smaller step sizes nor higher-order solvers (e.g., RK4) can eliminate them.

Consider the case $n = 10$, with a strongly connected graph $\mathcal{W}$ illustrated in \Cref{fig Example 3.1} and let $d(t,x) \equiv \pmb{0}$. Here, $\A(\mathcal{W}) = 24 > 0$, indicating possible dissensus solution. The simulation results are shown in \Cref{fig Example 3} (see caption for details).

The above phenomenon also appears in unweighted directed graphs. In fact, the weighted graph from the previous example can be transformed into an equivalent unweighted one by the following procedure: For each node $i$, let $p$ be the maximum weight among its outgoing edges. If $p > 1$, split $i$ into $p$ subnodes $i^1, i^2, \dots, i^p$, redirect incoming edges to all subnodes, and replace each edge $(i,j)$ of weight $k$ with $k$ unit-weight edges $(i^1,j), \dots, (i^k,j)$. \Cref{Example 3 node spilt} illustrates an example of such a construction.

\begin{figure}[htbp]
    \centerline{\includegraphics[width=0.3\columnwidth]{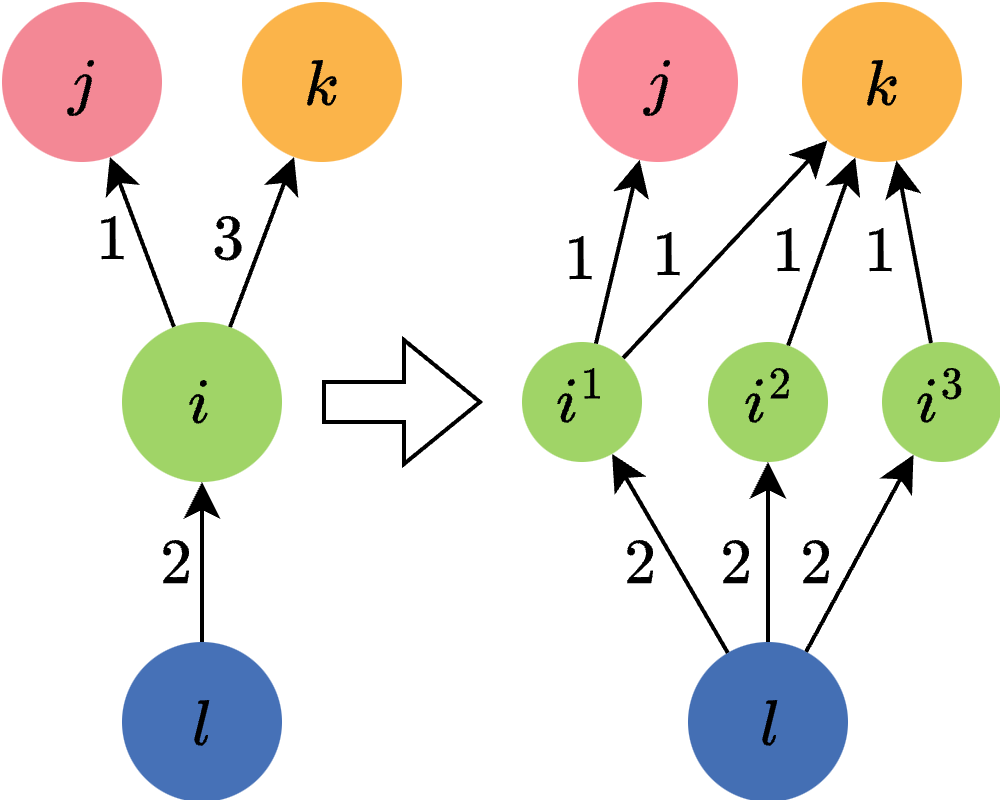}}
    \caption{Illustration of the node-splitting process for node $i$: since its maximum out-edge weight is 3, it is split into subnodes $i^1, i^2, i^3$. The incoming edge $(l, i)$ becomes $(l, i^1), (l, i^2), (l, i^3)$; the out-edge $(i, j)$ becomes $(i^1, j)$; and the weight-3 edge $(i, k)$ becomes $(i^1, k), (i^2, k), (i^3, k)$. This converts all outgoing weights to $1$.}
    \label{Example 3 node spilt}
\end{figure}

By setting all split subnodes to share the original node’s initial condition, i.e., $x_{i^1}(0) = \dots = x_{i^p}(0) = x_i(0)$, there exists a solution where these subnodes remain identical over time and follow the same trajectory as the original node in the weighted system, i.e., $x_{i^1}(t) = \dots = x_{i^p}(t) = x_i(t)$ for all $t$.

\subsection{Example of $\A(\mathcal{W}) > 0$ with Worst Disturbance
}\label{ss:s:2}

\begin{figure}[htbp]
    \centering  
    \hspace{-0.012\linewidth}
    \subfigure[Graph structure. Darker edge colors indicate higher edge weights.]{\label{fig ss:s:2 1}\includegraphics[width=0.47\linewidth]{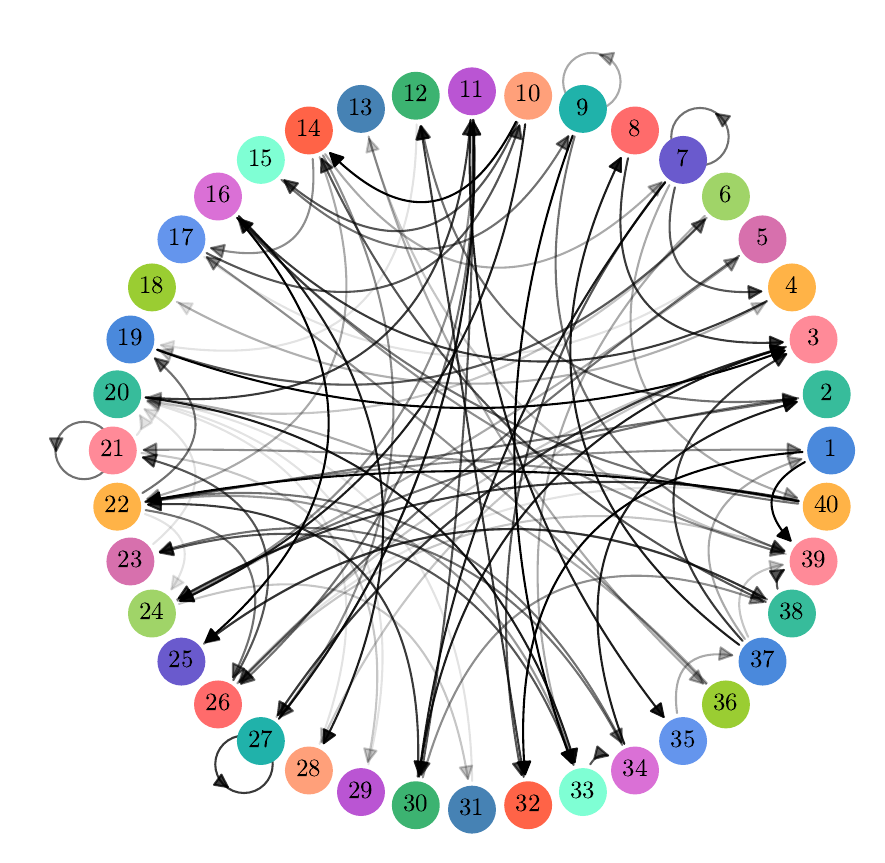}}
    \hspace{0.006\linewidth}
    \subfigure[Simulation from initial condition leading to consensus.]{\label{fig ss:s:2 2}\includegraphics[width=0.48\linewidth]{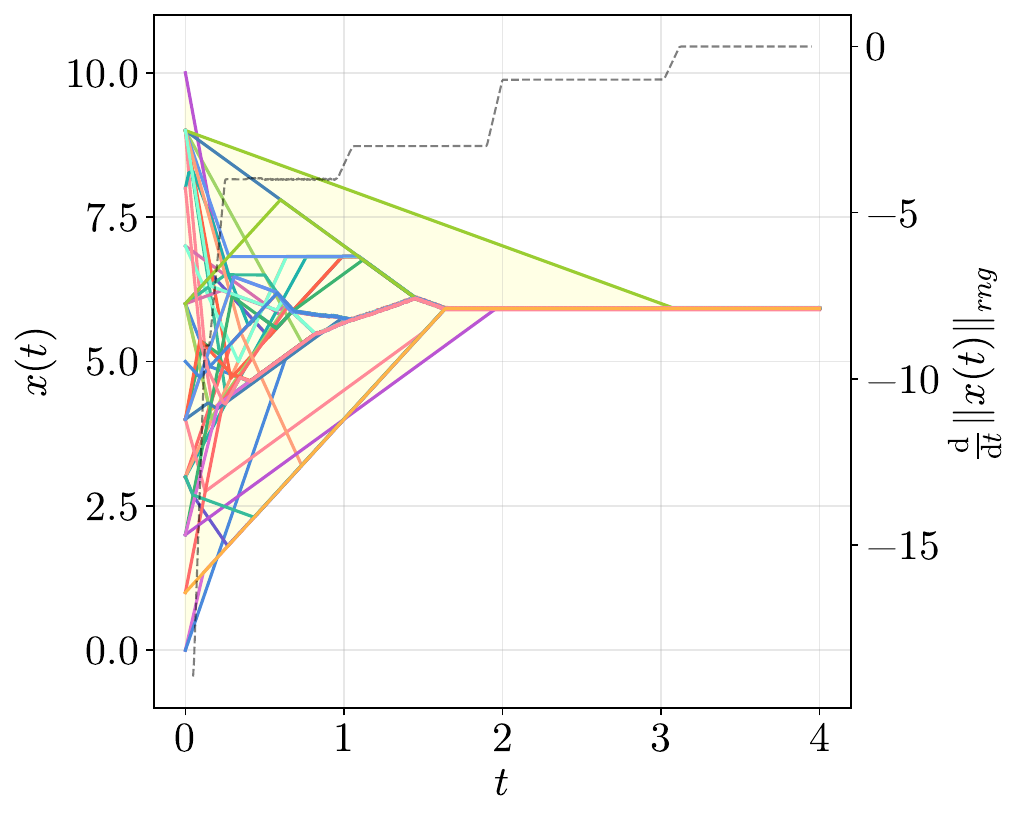}}
    \subfigure[Simulation from two initial conditions leading to dissensus.]{\label{fig ss:s:2 3}\includegraphics[width=0.47\linewidth]{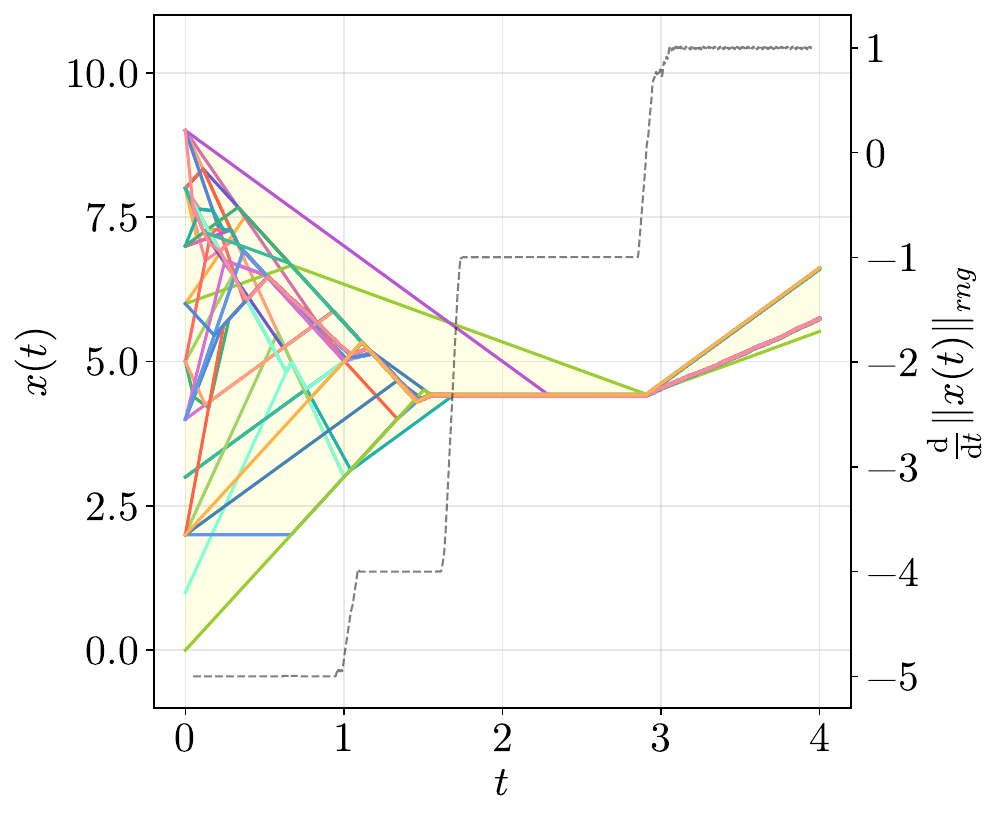}\includegraphics[width=0.47\linewidth]{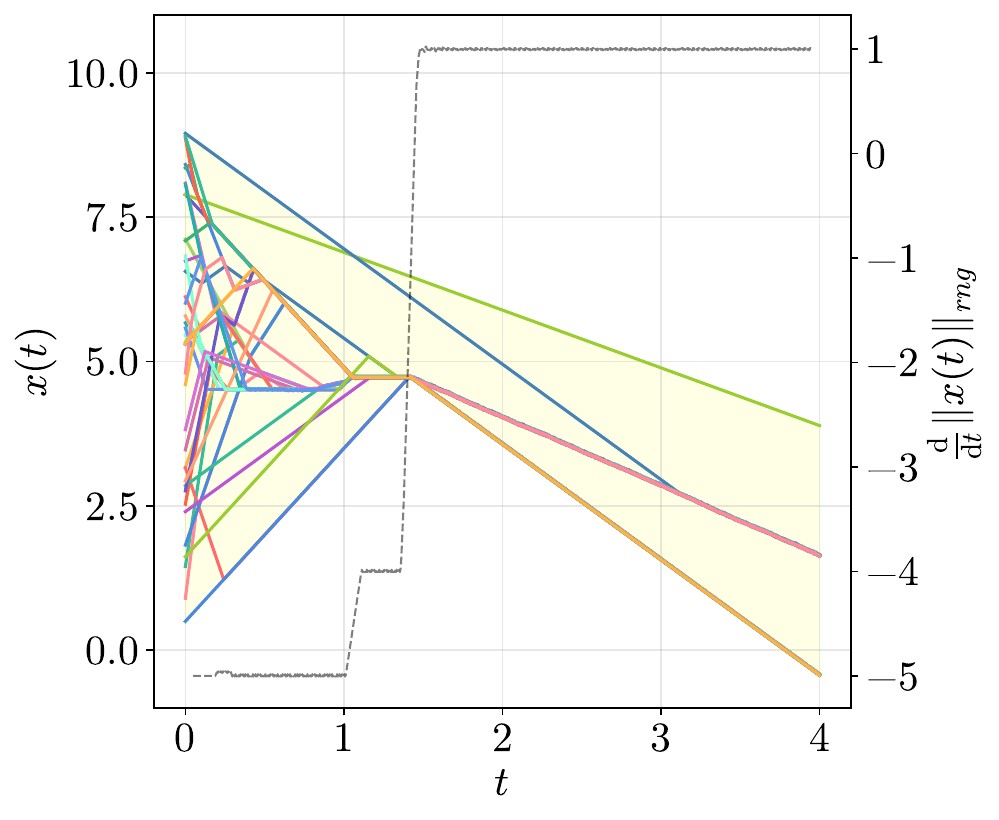}}
    \caption{Graph structure and simulation results under various conditions in \Cref{ss:s:2}. Simulations used the Simple Euler method with a time step of $2^{-10}$ s. Both consensus (\Cref{fig ss:s:2 2}) and dissensus solutions (\Cref{fig ss:s:2 3}) are shown, indicating that the system exhibits Weak Consensus. The yellow region spans from $m(x(t))$ to $M(x(t))$; the gray dashed line shows $\frac{\mathrm{d}}{\mathrm{d}t}\D{x(t)}$, i.e., the rate at which this region expands. It never exceeds $\A(\mathcal{W}) = 1$, and the cases in \Cref{fig ss:s:2 3} reach it at certain times.}
    \label{fig ss:s:2}
\end{figure}

This example shows that if $\A(\mathcal{W}) > 0$, both consensus and dissensus solutions may arise, and the derivative of the difference between the maximum and minimum agent states $\frac{\mathrm{d}}{\mathrm{d}t}\D{x(t)}$ can reach $\A(\mathcal{W})$. The outcome depends on the initial condition and is generally unpredictable.

Consider the case $n = 40$, with a strongly connected graph $\mathcal{W}$ and disturbance bound illustrated in \Cref{fig ss:s:2 1}. In this figure, the self-loop at node $i$ represents the upper bound of the disturbance term acting on agent $i$, and edge color intensity indicates weight magnitude, with all weights being integers between $1$ and $9$.

We consider the worst-case disturbance $d(t,x)$, which maximally opposes consensus by applying $w_{ii}$ to nodes near the maximum and $-w_{ii}$ to those near the minimum:
\begin{equation}
    d_i(t,x) = \begin{cases}
        w_{ii}& x_i \ge M(x)-0.01\\
        -w_{ii}& x_i \le m(x)+0.01\  \text{and}\  x_i < M(x)-0.01\\
        0&\text{otherwise}\\
    \end{cases}
\end{equation}

Here, $\A(\mathcal{W}) = 1 > 0$, and the simulation results are shown in \Cref{fig ss:s:2} with details in the caption.

\subsection{Example of $\A(\mathcal{W}) < 0$ with Random Disturbance
}\label{ss:s:3}

\begin{figure}[htbp]
    \centering  
    \hspace{-0.08\linewidth}
    \subfigure[Graph structure. Darker edge colors indicate higher edge weights.]{\label{fig ss:s:3 1}\includegraphics[width=0.47\linewidth]{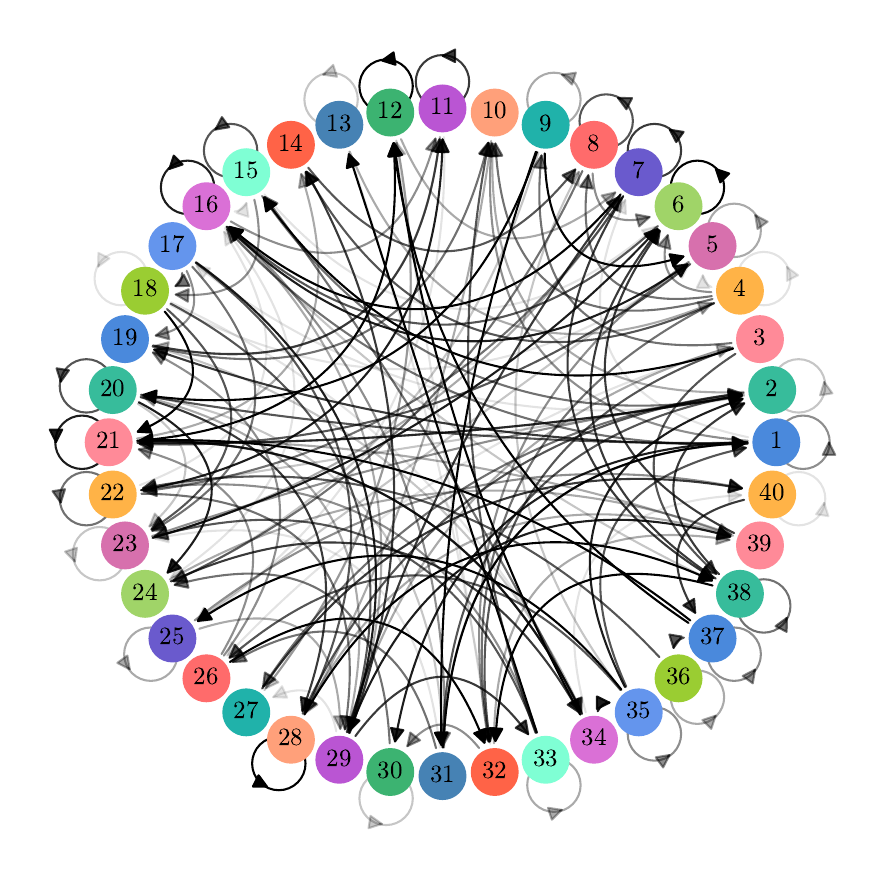}}
    \subfigure[Plots of the first seven disturbance functions over time.]{\raisebox{0.06\height}{\label{fig ss:s:3 2}\includegraphics[width=0.401\linewidth]{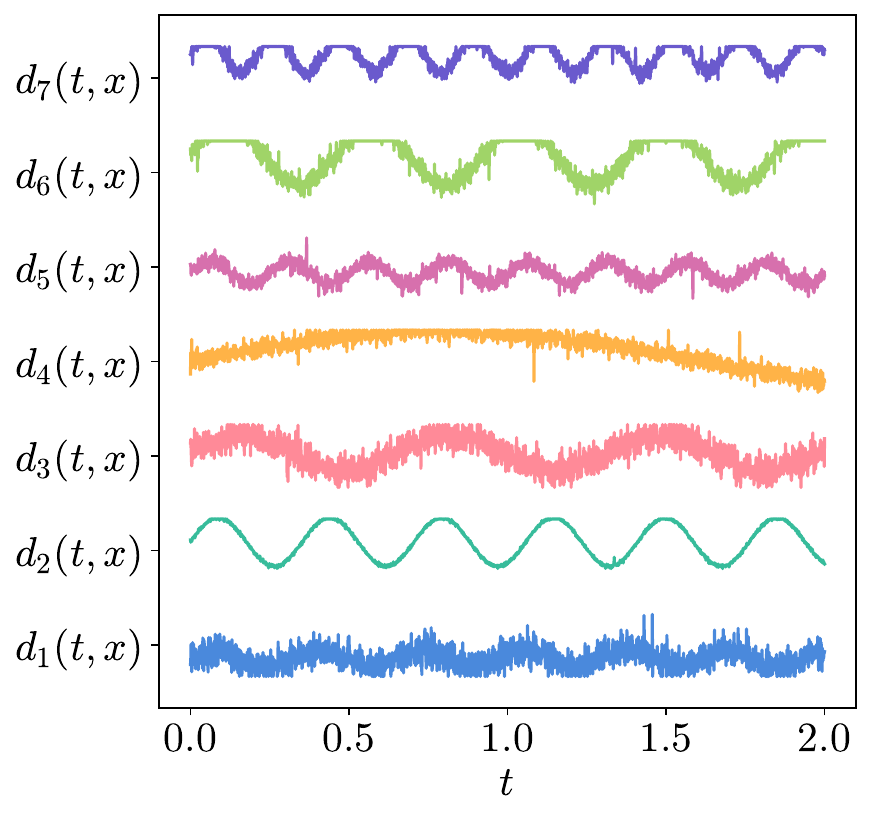}}}
    \subfigure[Simulation with two initial values]{\label{fig ss:s:3 3}\includegraphics[width=0.485\linewidth]{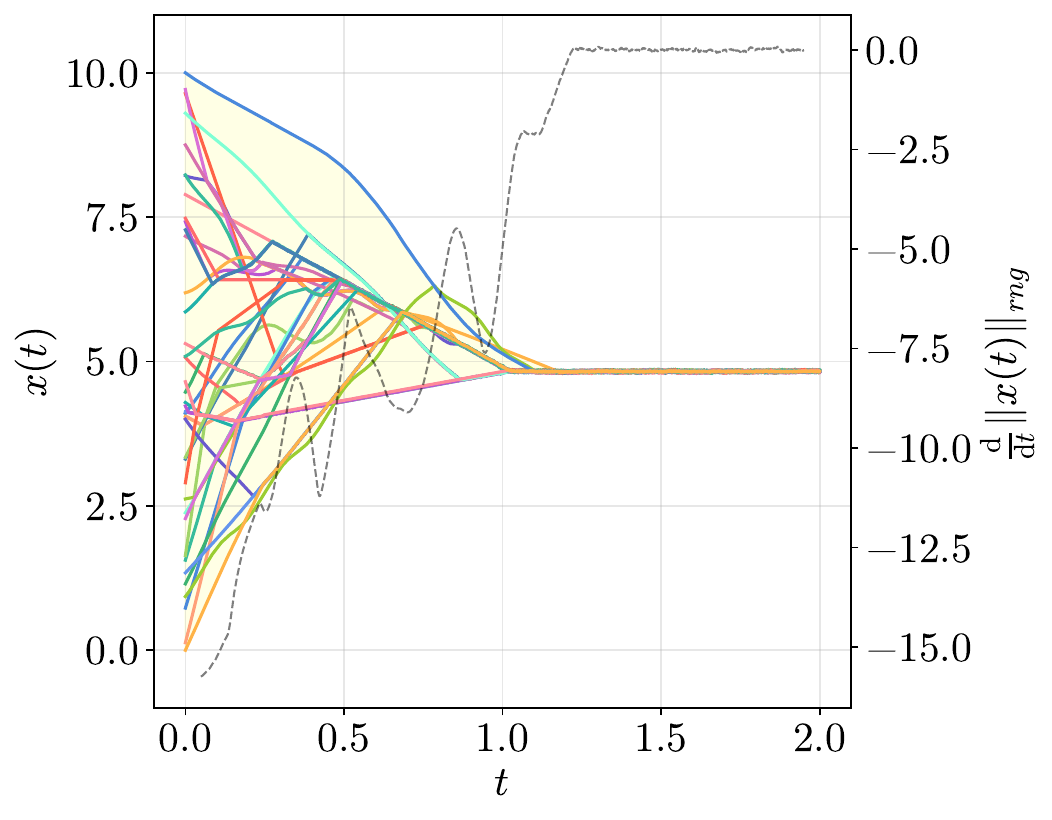}\includegraphics[width=0.47\linewidth]{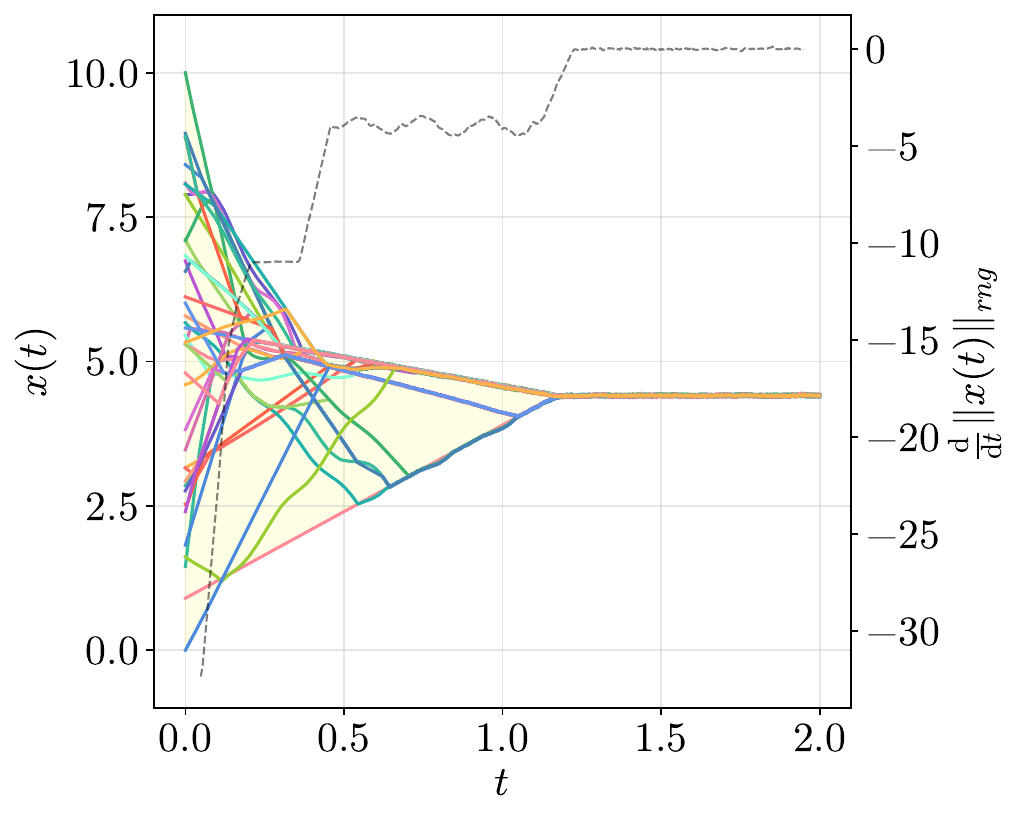}}
    \caption{Graph structure, disturbance functions, and simulation results in \Cref{ss:s:3}. Simulations used the Simple Euler method with a time step of $2^{-10}$ s. Both runs show that the system reaches consensus before $T(\D{x(0)}) = 10$ s despite disturbances. The yellow region spans from $m(x(t))$ to $M(x(t))$, and the gray dashed line shows $\frac{\mathrm{d}}{\mathrm{d}t}\D{x(t)}$, i.e., the rate at which this region expands. This rate stays below $\A(\mathcal{W}) = -1$ (i.e., the contraction rate is at least $1$) whenever $\D{x(t)} \ne 0$. (The moving average smooths out sudden changes in $\frac{\mathrm{d}}{\mathrm{d}t}\D{x(t)}$, causing sharp jumps to appear as gradual slopes in the plot.)}
    \label{fig ss:s:3}
\end{figure}

This example shows that if $\A(\mathcal{W}) < 0$, the system always reaches consensus, with $\frac{\mathrm{d}}{\mathrm{d}t}\D{x(t)} < \A(\mathcal{W})$ at all times. Consensus is guaranteed before time $T(\D{x(0)})$ where $T(\cdot)$ is defined in \Cref{eq in t:m:4}.

Consider the case $n = 40$, with a strongly connected graph $\mathcal{W}$ and disturbance bound illustrated in \Cref{fig ss:s:3 1}. In this figure, the self-loop at node $i$ represents the upper bound of the disturbance term acting on agent $i$, and edge color intensity indicates weight magnitude, with all weights being integers between $1$ and $9$.

We model $d_i(t, x)$ as a time-varying, state-independent random disturbance composed of white noise, low-frequency harmonics, and impulses, and bounded within $[-w_{ii}, w_{ii}]$.

Here, $\A(\mathcal{W}) = -1 < 0$. Graph structure, the first seven components of $d(t,x)$, and simulation results are shown in \Cref{fig ss:s:3} with details in the caption.

\section{Conclusion}\label{s:Conclusion}
We investigated the signum consensus protocol for continuous-time multi-agent systems over weighted directed graphs subject to bounded disturbances. Unexpected properties of this discontinuous consensus protocol are uncovered; as one of the simplest discontinuous protocols, its behavior may reflect that of more complex ones. We introduce the Polarization Index to capture the system’s tendency toward consensus or dissensus, establish the necessary and sufficient conditions for consensus, and provide a least upper bound on the consensus time. And we reformulate the computation of the Polarization Index as a mixed-integer programming program in certain cases, reducing average-case complexity. By grounding these findings in an opinion dynamics framework, we reveal an intrinsic connection between system dynamics, network topology, and the emergence of dissensus. This suggests that such links are not incidental but structurally embedded, offering rich directions for further exploration. 

Since in some systems (e.g., digital circuits) the computation of the signum function is exact, it is also worth studying the system under the simplest convex definition of Filippov solution.

\appendix

\crefalias{section}{appendix}

\section{Proof of \Cref{t:m:1}}\label{Proof of t:m:1}

Since $d_i(\cdot,\cdot)$ is a bounded single-valued function, \cite[Lemma 1, §6]{filippov2013differential} implies that the set-valued function $\mathcal{D}_i(t,x)$ is upper semicontinuous in $(t,x)$. Moreover, it is straightforward to show that for every $(t,x)\in\mathbb{R}^{n+1}$, $\mathcal{D}_i(t,x)$ is nonempty, bounded, closed, and convex.
    
An identical argument shows that each $\mathcal{U}_{ij}(x)$ is upper semicontinuous in $x$ and has nonempty, bounded, closed, and convex values for every $x\in\mathbb{R}^n$.

Hence, combining \cite[Lemma 16, §5, Lemma 2, §6]{filippov2013differential} with further derivations, the set-valued function \(\mathcal{F}(t,x)\) defined in \Cref{e:s:2} is upper semicontinuous in \((t, x)\) and, for all \((t, x) \in \mathbb{R}^{n+1}\), the set \(\mathcal{F}(t, x)\) is nonempty, bounded, closed, and convex.

Therefore, \cite[Theorem 1, §7]{filippov2013differential} guarantees that for any initial $(t^0,x^0)$ there is a local solution through $(t^0,x^0)$, and by the usual extension argument this solution extends to all of $\mathbb{R}$.

\section{Verification of Solutions in \Cref{Example 2}
}\label{Proof of Example 2}

We prove that both $x(\cdot)$ and $x'(\cdot)$ are solutions of \Cref{e:c:3} with the same initial condition $x(0) = [0,1,2]^T$. 

For $t\in [0,1/6)$, $x_1(t)$, $x_2(t)$ and $x_3(t)$ are different so $\mathcal{F}(t,x(t)) = \{[4,-2,-1]^T\}$ and $\dot{x}(t) \in \mathcal{F}(t,x(t))$ holds. 

For $t\in (1/6,1)$, $x_1(t) = x_2(t)\not = x_3(t)$ so $\mathcal{U}_{12}(x(t)) = [-2,2]$, $\mathcal{U}_{21}(x(t)) = [-3,3]$ (Here, $[-2,2]$ and $[-3,3]$ are closed interval, not vectors), and
\begin{equation}\label{eq appendix A 1}
        \mathcal{F}(t,x(t)) = \operatorname{conv}\{[-2,-2,-1]^T, [4,-2,-1]^T, [-2,2,-1]^T, [4,2,-1]^T\},
\end{equation}

Since $\dot{x}(t) = [2/5,2/5,-1]^T$, $\dot{x}(t) \in \mathcal{F}(t,x(t))$ holds.

For $t\in (1,\infty)$, $x_1(t) = x_2(t) = x_3(t)$ so
\begin{equation}
    \begin{aligned}
        \mathcal{F}(t,x(t)) = \operatorname{conv}\{&[-4,-2,-1]^T, [-4,-2,1]^T, [-4,2,-1]^T, [-4,2,1]^T, \\
        &[4,-2,-1]^T, [4,-2,1]^T, [4,2,-1]^T, [4,2,1]^T \},
    \end{aligned}
\end{equation} and $\dot{x}(t) = [0,0,0]^T\in \mathcal{F}(t,x(t))$.

Thus, $x(\cdot)$ satisfies the differential inclusion for almost all $t$. And since $x(\cdot)$ is absolutely continuous, it is a Filippov solution of \Cref{e:c:3}.

For $x'(\cdot)$, the case $t \in [0, 1/6)$ is identical. For $t \in (1/6, \infty)$, $x'_1(t) = x'_2(t) \neq x'_3(t)$, so $\mathcal{F}(t,x(t))$ remains as in \Cref{eq appendix A 1}, and $\dot{x}'(t) = [-2, -2, -1]^T \in \mathcal{F}(t,x(t))$. Hence, $x'(\cdot)$ is also a Filippov solution.

\section{Proof of \Cref{t:m:2}}\label{Proof of t:m:2}

We will prove the result for $\frac{\mathrm{d}}{\mathrm{d}t}M(x(t))$, as the proof for $\frac{\mathrm{d}}{\mathrm{d}t}m(x(t))$ follows similarly.

By \cite[Lemma 2]{usevitch2020resilient}, the function $M(\cdot)$ is both locally Lipschitz and regular on $\mathbb{R}^n$. Therefore, by \cite[Theorem 2.2]{shevitz2002lyapunov}, we have $\frac{\mathrm{d}}{\mathrm{d}t}M(x(t))\in \widetilde{\mathcal{L}}_{\mathcal{F}}M(x(t))$ a.e. on $t\in \mathbb{R}$, 
\begin{equation}
    \widetilde{\mathcal{L}}_{\mathcal{F}}M(x(t)) =  \{a\in \mathbb{R}\mid \exists v\in\mathcal{F}(t,x(t))  \ \text{s.t.} \ \forall\zeta\in \partial M(x(t)),\zeta^T v = a\}.
\end{equation}

By \cite[Lemma 3]{usevitch2020resilient}, each $z\in \partial M(x(t))$ can be written as the convex combination $z = E_{S_M(x(t))}\theta$, where $E_{S_M(x(t))}$ consists of the columns of the identity matrix $n\times n$ indexed by the set $S_M(x(t))$, and $\theta\in \mathbb{R}^{\operatorname{card}(S_M(x(t)))} $ satisfies $\theta \succeq \pmb{0}$ and $ \pmb{1}^T \theta = 1$. Therefore, it follows that $a\in \widetilde{\mathcal{L}}_{\mathcal{F}}M(x(t))$ if and only if there exists a $v\in \mathcal{F}(t,x(t))$ s.t. $(E_{S_M(x(t))}\theta)^Tv = \theta^T(E_{S_M(x(t))}^Tv) = a$ for all $\theta$ satisfying $\theta \succeq \pmb{0}$ and $\pmb{1}^T \theta = 1$. This condition holds if and only if $E_{S_M(x(t))}^Tv = a\pmb{1}$.
Therefore, we conclude that: 
\begin{equation}
    a\in \widetilde{\mathcal{L}}_{\mathcal{F}}M(x(t)) \iff \exists v\in\mathcal{F}(t,x(t)),v_{S_M(x(t))} = a \pmb{1}.
\end{equation}

Thus, we obtain the desired result:
\begin{equation}
    \frac{\mathrm{d}}{\mathrm{d}t}M(x(t)) \in \left(\operatorname{span}(\pmb{1})\cap\mathcal{F}_{S_M(x(t))}(t, x(t))\right)_1.
\end{equation}

(This proof partially cites the proof of Theorem 3 from \cite{usevitch2020resilient}.)

\section{Proof of \Cref{c f F subset S}}\label{Proof of c f F subset S}

We prove the case of $S_M$; the case of $S_m$ is similar.

For all $y\in \mathcal{F}(t,x)$, we have
\begin{equation}
    y_i \ge -\sum_{j\in \mathcal{V}}w_{ji} \quad \forall i\in S_M(x).
\end{equation}

Since $x_i > x_j$ for all $i \in S_M(x)$ and $j \notin S_M(x)$, and $\operatorname{card}(\mathcal{V})$ is finite, there exists $\epsilon > 0$ such that $x_i - \epsilon > x_j$ for all such pairs. Consequently, for all $i \in S_M(x)$ and $j \notin S_M(x)$, we have $\mathcal{U}_{ji}(x) = \{-w_{ji}\}$. Therefore, 
\begin{equation}
\begin{aligned}
    y_i &\le \sum_{j\in S_M(x)}w_{ji} - \sum_{j\notin S_M(x)}w_{ji} \quad \forall i\in S_M(x).
\end{aligned}
\end{equation}

Therefore, $y_{S_M(x)}\in S(S_M(x))$.

\section{Proof of \Cref{t:m:3}}\label{Proof of t:m:3}

To simplify notation, we define $\phi: \mathcal{P}(\mathcal{V}) \to \{-1, 1\}^n$ as follows:
\begin{equation}\label{def for phi}
    \phi_i(V) = \begin{cases}
        1 & i\in V,\\
        -1 & i\notin V.
    \end{cases}
\end{equation}

Hence, $\alpha^V$ and $\beta^V$ are equivalent to:
\begin{equation}
    \alpha^V = -(\mathcal{W}^{T}\pmb{1})_V, \qquad
    \beta^V = \left(\mathcal{W}^T\phi(V)\right)_V.
\end{equation}

We will need the following Lemmas for the proof of \Cref{t:m:3}.

\begin{Lemma}\label{l in proof t:m:3}
    Let $f:\mathbb{R} \to \mathbb{R}_{\ge 0}$ be absolutely continuous and satisfy 
    \begin{equation}
        \frac{\mathrm{d}}{\mathrm{d}t}f(t) \le A \quad \text{a.e.\ on }\{t\in \mathbb{R}\mid f(t)>0\},
    \end{equation}
    where $A\in \mathbb{R}$. Then for all $t^0\in \mathbb{R}$
    \begin{equation}
        f(t) \le max\{0,f(t^0)+At\} \quad \forall t \ge t^0.
    \end{equation}
\end{Lemma}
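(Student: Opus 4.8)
The plan is to prove this Gr\"onwall-type differential inequality by a straightforward comparison argument, treating the cases $f(t^0) \le 0$ (which cannot happen since $f \ge 0$, so really $f(t^0) = 0$) and $f(t^0) > 0$ separately, and then patching them together. The key observation is that the bound $\max\{0, f(t^0)+At\}$ is precisely the trajectory of the comparison function that moves at slope $A$ until it hits zero and then stays at zero. Since $f$ is nonnegative and absolutely continuous, it suffices to control how fast $f$ can grow on the open set where it is strictly positive.

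\medskip
First I would fix $t^0 \in \mathbb{R}$ and define the comparison function $g(t) := \max\{0, f(t^0) + A(t-t^0)\}$ for $t \ge t^0$. Wait --- I should note a subtlety: the statement is written with $f(t^0)+At$ rather than $f(t^0)+A(t-t^0)$, so I would either read $t$ as elapsed time from $t^0$ or carry the shift explicitly; in either reading the argument is identical, so I would state at the outset that I take $t$ to denote elapsed time and work with $f(t^0)+At$ as written. The core step is to show that the set $\{t \ge t^0 : f(t) > g(t)\}$ is empty. Suppose for contradiction it is nonempty and let $s := \inf\{t \ge t^0 : f(t) > g(t)\}$. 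By continuity of both $f$ and $g$ and the fact that $f(t^0) = g(t^0)$, we have $f(s) = g(s)$ and $f(t) \le g(t)$ on $[t^0, s]$.

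\medskip
Next I would distinguish whether $g(s) > 0$ or $g(s) = 0$. If $g(s) > 0$, then $f(s) = g(s) > 0$, so by continuity $f > 0$ on a neighborhood of $s$; there the hypothesis gives $\frac{\mathrm{d}}{\mathrm{d}t}f \le A = \frac{\mathrm{d}}{\mathrm{d}t}g$ a.e., and integrating the absolutely continuous functions from $s$ gives $f(t) - f(s) \le g(t) - g(s)$, hence $f(t) \le g(t)$, contradicting that $s$ is the infimum of the violation set. If $g(s) = 0$, then $f(s) = 0$, and since $f \ge 0$ with $f$ absolutely continuous, for any $t$ slightly larger than $s$ we have $f(t) = f(s) + \int_s^t \dot f \le \int_s^t \dot f$; on the portion of $(s,t)$ where $f > 0$ the integrand is at most $A$, and where $f = 0$ the derivative is $0$ a.e., so $f(t) \le \max\{0, A(t-s)\} \le g(t)$ because $g$ already sits at its floor or is rising at slope $A$. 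Either way we contradict the definition of $s$, so the violation set is empty and $f(t) \le g(t) = \max\{0, f(t^0)+At\}$ for all $t \ge t^0$.

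\medskip
The main obstacle I anticipate is handling the boundary behavior cleanly at points where $f$ touches zero, since the hypothesis only controls $\dot f$ on the open set $\{f > 0\}$ and says nothing on $\{f = 0\}$. The clean way around this is the standard fact that for an absolutely continuous nonnegative function, $\dot f = 0$ a.e.\ on the level set $\{f = 0\}$; combined with the given bound on $\{f > 0\}$ this yields $\dot f \le \max\{0, A\}$ --- and more usefully the pointwise estimate via integration above --- everywhere it matters. I would invoke this level-set fact explicitly (it follows from the absolute continuity and is a routine measure-theoretic lemma) to justify that contributions from the zero set do not let $f$ escape the comparison bound, which makes the $g(s) = 0$ case rigorous.
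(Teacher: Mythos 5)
Your proposal is correct, but it takes a genuinely different route from the paper's. You run a forward ``first escape time'' comparison: you set $g(t)=\max\{0,f(t^0)+At\}$, take $s$ to be the infimum of the violation set $\{t\ge t^0: f(t)>g(t)\}$, and split on whether $g(s)>0$ or $g(s)=0$; the $g(s)=0$ case forces you to invoke the measure-theoretic fact that an absolutely continuous function has $\dot f=0$ a.e.\ on its zero set, which you correctly flag and use. The paper instead argues backward from a hypothetical violation time $\tau$: it takes $\tau'$ to be the \emph{last} zero of $f$ before $\tau$, observes that $f>0$ on $(\tau',\tau]$ so the hypothesis applies on the whole interval of integration, integrates $\dot f\le A$ from $\tau$ back toward $\tau'$, and derives contradictions separately for $A\le 0$ and $A>0$ (and, if $f$ never vanishes on $[0,\tau]$, gets $f(0)>f(0)$ directly). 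The paper's arrangement buys freedom from the level-set derivative lemma, since it only ever integrates over a region where $f>0$; your arrangement is the more standard comparison-function template and localizes the case analysis on $g$ rather than on the sign of $A$. One small point you should make explicit: your inequality $\max\{0,A(t-s)\}\le g(t)$ in the $g(s)=0$ case is justified by noting that $g(t)\ge 0$ always and, when $A>0$, $g(t)\ge f(t^0)+A(t-t^0)=\bigl(f(t^0)+A(s-t^0)\bigr)+A(t-s)\ge A(t-s)$ because $f(t^0)\ge 0$ and $s\ge t^0$; the phrase ``sits at its floor or is rising at slope $A$'' papers over exactly the scenario (ruled out by this computation) in which $g$ is still at its floor while $A(t-s)>0$.
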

\begin{proof}
Without loss of generality, assume $t^0 = 0$.

For $t=0$, the statement is trivial. 

Suppose, to the contrary, that there exists \(\tau>0\) such that
\begin{equation}
    f(\tau) > \max\{0,f(0)+A\tau\}.
\end{equation}

Clearly, $f(\tau) > 0$. Assume that $f(t) = 0$ for some $t \in [0, \tau]$, set 
\begin{equation}
    \tau' = \operatorname{sup}\{t\in[0,\tau]\mid f(t) = 0\}.
\end{equation} 

By continuity of $f(\cdot)$, $f(\tau') = 0$; otherwise, if $f(\tau') > 0$, then $f(t) > 0$ for $t$ near $\tau'$, contradicting the fact that $\tau'$ is the supremum. $f(t) > 0$ for all $t \in (\tau', \tau]$, and for any such $t$ we have 
\begin{equation}\label{eq in l in proof of t:m:3}
        f(t) = f(\tau)+\int_\tau^t\frac{\mathrm{d}}{\mathrm{d}s}f(s)\mathrm{d}s \ge f(\tau)+\int_\tau^tA\mathrm{d}s = f(\tau)+A(t-\tau).
\end{equation}

Case i: $A \le 0$. $\lim_{t\to \tau'^+}f(t) \ge f(\tau) + A(\tau'-\tau) \ge f(\tau) > 0$ but $f(\tau') = 0$ which contradicts the continuity of $f(\cdot)$.

Case ii: $A > 0$. Then 
\begin{equation}\label{eq 2 in l in proof of t:m:3}
    \max\{0,f(0)+At\} = f(0)+At \ge 0,
\end{equation} for all $t\in \mathbb{R}_{\ge 0}$. However, $0 = \lim_{t\to \tau'^+}f(t) \ge f(\tau)+A(\tau'-\tau) > f(0)+A\tau+A(\tau'-\tau) = f(0)+A\tau'$ which contradicts \Cref{eq 2 in l in proof of t:m:3}. 

Therefore, $f(t) > 0$ for $t \in [0, \tau]$ and \Cref{eq in l in proof of t:m:3} holds on $[0,\tau]$. Consequently,
\begin{equation}
        f(0) \ge f(\tau)-A\tau > \max\{0,f(0)+A\tau\}-A\tau \ge f(0)+A\tau-A\tau = f(0).
\end{equation} 

Thus, we obtain \( f(0) > f(0) \), a contradiction.  

Therefore for all $t\ge0$, $f(t) \le max\{0,f(0)+At\}$.
\end{proof}

\begin{Lemma}\label{l 2 in proof of t:m:3}
    Let $x,a^1,b^1,a^2,b^2\in \mathbb{R}^n$ where $a^1 \preceq b^1$, $a^2 \preceq b^2$ and $a^1+a^2\preceq x\preceq b^1+b^2$, then there must exist $a^1\preceq x^1 \preceq b^1$ and $a^2 \preceq x^2 \preceq b^2$ such that $x = x^1+x^2$.
\end{Lemma}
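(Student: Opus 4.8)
The plan is to exploit the fact that the partial order $\preceq$ and the equation $x = x^1 + x^2$ are all imposed componentwise, so the problem decouples across coordinates. First I would observe that it suffices to treat the scalar case $n = 1$: if for each index $i\in\{1,\dots,n\}$ one can find scalars $x^1_i \in [a^1_i, b^1_i]$ and $x^2_i \in [a^2_i, b^2_i]$ with $x^1_i + x^2_i = x_i$, then stacking these scalars into vectors $x^1, x^2 \in \mathbb{R}^n$ immediately yields $a^1 \preceq x^1 \preceq b^1$, $a^2 \preceq x^2 \preceq b^2$, and $x^1 + x^2 = x$. So the vector claim reduces to a one-dimensional statement, applied coordinate by coordinate.

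For the scalar case the statement is essentially the Minkowski-sum identity $[a^1, b^1] + [a^2, b^2] = [a^1 + a^2,\, b^1 + b^2]$ for real intervals, but I would give an explicit construction rather than invoke the identity abstractly. Set $x^1 := \max\{a^1,\, x - b^2\}$ and $x^2 := x - x^1$, and verify the four bound constraints directly. The lower bound $x^1 \ge a^1$ is immediate from the definition as a maximum. For the upper bound $x^1 \le b^1$, note that $a^1 \le b^1$ holds by hypothesis, while $x - b^2 \le b^1$ follows from $x \le b^1 + b^2$; hence the larger of the two arguments of the maximum is still at most $b^1$. For the residual, $x^1 \ge x - b^2$ gives $x^2 = x - x^1 \le b^2$, and combining $a^1 \le x - a^2$ (equivalent to the hypothesis $a^1 + a^2 \le x$) with $x - b^2 \le x - a^2$ (equivalent to $a^2 \le b^2$) shows $x^1 = \max\{a^1, x-b^2\} \le x - a^2$, so that $x^2 = x - x^1 \ge a^2$.

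I do not anticipate a genuine obstacle here; the content is elementary and the only point requiring care is confirming that the single chosen value $x^1$ simultaneously lies in its own interval $[a^1, b^1]$ and keeps the residual $x - x^1$ inside $[a^2, b^2]$. The hypotheses $a^1 + a^2 \preceq x \preceq b^1 + b^2$ are precisely what guarantee that the intervals $[a^1, b^1]$ and $[x - b^2,\, x - a^2]$ overlap in each coordinate, so the construction is always well defined and lands in their intersection. If one prefers a cleaner presentation, I would instead argue nonemptiness of this intersection abstractly: it is nonempty iff $a^1 \le x - a^2$ and $x - b^2 \le b^1$, both of which are the given bounds, after which any point of the intersection furnishes the desired $x^1$.
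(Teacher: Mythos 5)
Your proof is correct, and it takes a genuinely different route from the paper's. You decouple the problem into coordinates and give an explicit scalar construction, $x^1_i := \max\{a^1_i,\, x_i - b^2_i\}$, $x^2_i := x_i - x^1_i$, verifying the four interval bounds directly from the hypotheses; in effect you prove the interval identity $[a^1_i,b^1_i]+[a^2_i,b^2_i]=[a^1_i+a^2_i,\,b^1_i+b^2_i]$ by hand, and your bound-checking is complete and sound (the key observations $a^1_i \le x_i - a^2_i$ and $x_i - b^2_i \le b^1_i$ are exactly the two sides of the hypothesis $a^1+a^2 \preceq x \preceq b^1+b^2$). The paper instead argues geometrically: it writes each hyperrectangle $R(a,b)$ as the convex hull of its vertex set $P(a,b)$, observes that every vertex of $R(a^1+a^2, b^1+b^2)$ is a sum of a vertex of $R(a^1,b^1)$ and a vertex of $R(a^2,b^2)$, and then invokes the identity $\operatorname{conv}(S_1+S_2)=\operatorname{conv}(S_1)+\operatorname{conv}(S_2)$ to conclude $R(a^1+a^2,b^1+b^2)\subset R(a^1,b^1)+R(a^2,b^2)$. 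Your argument is more elementary and constructive, producing an explicit decomposition without appealing to the Minkowski-sum/convex-hull identity; the paper's argument is shorter once that identity is granted and would adapt to other polytopes described by their vertices, but it is non-constructive. Either proof is acceptable here.
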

\begin{proof}
    Define
    \begin{equation}
        R(a, b) := \{x \in \mathbb{R}^n \mid a \preceq x \preceq b\}.
    \end{equation}
    
    The set $R(a, b)$ is an axis-aligned hyperrectangle in $\mathbb{R}^n$. Clearly,
    \begin{equation}
        R(a, b) = \operatorname{conv} P(a, b),
    \end{equation}
    where
    \begin{equation}
        P(a, b) := \{c \mid c_j \in \{a_j, b_j\},\ \forall j \in \{1, 2, \dots, n\}\}
    \end{equation}
    is the set of vertices of the hyperrectangle.

    Moreover, we have
    \begin{equation}
        P(a^1_j + a^2_j,\, b^1_j + b^2_j) \subset P(a^1_j, b^1_j) + P(a^2_j, b^2_j),
    \end{equation}
    where $+$ denotes the Minkowski sum.

    By a property of Minkowski sums: For all non-empty subsets $S_1, S_2$ of a real vector space, the convex hull of their Minkowski sum is equal to the Minkowski sum of their convex hulls. we obtain:
    \begin{equation}
        \begin{aligned}
            R(a^1_j + a^2_j,\, b^1_j + b^2_j) &= \operatorname{conv} P(a^1_j + a^2_j,\, b^1_j + b^2_j) \\
            &\subset \operatorname{conv} [P(a^1_j, b^1_j) +  P(a^2_j, b^2_j)]
            \\
            &= \operatorname{conv} P(a^1_j, b^1_j) + \operatorname{conv} P(a^2_j, b^2_j) \\
            &= R(a^1_j, b^1_j) + R(a^2_j, b^2_j).
        \end{aligned}
    \end{equation}

Therefore, for any $x \in R(a^1_j + a^2_j,\, b^1_j + b^2_j)$, there exist $x^1 \in R(a^1_j, b^1_j)$ and $x^2 \in R(a^2_j, b^2_j)$ such that $x = x^1 + x^2$. This completes the proof.
\end{proof}

\begin{Lemma}\label{l 3 in proof of t:m:3}
    Let $\mathcal{W} \in \mathbb{R}_{\ge 0}^{n \times n}$, and $V \subsetneq \mathcal{V}$ be a nonempty proper subset. Then for any $y \in S(V)$, there exists a constant vector $d'_V \in \mathbb{R}^{\operatorname{card}(V)}$ with each component satisfying $d'_i \in [-w_{ii}, w_{ii}]$ for all $i \in V$, such that if a disturbance $d(\cdot,\cdot)$ is fixed on $V$ as $d_V(t,x)\equiv d'_V$, then
    \begin{equation}
    y \in \mathcal{F}_V(t,x) \quad \text{(resp. } -y \in \mathcal{F}_V(t,x) \text{)}
    \end{equation}
    holds for all $x$ satisfying $V \subset S_M(x)$ (resp. $V \subset S_m(x)$).
\end{Lemma}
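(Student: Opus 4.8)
The plan is to reduce the vector membership $y \in \mathcal{F}_V(t,x)$ to independent scalar conditions, one per coordinate $i \in V$, and then to satisfy each scalar condition by an appropriate choice of the fixed disturbance component $d'_i$.

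First I would establish a factorization of $\mathcal{F}_V$. Observe that in the definition \Cref{e:s:2}, the map $f_i = d_i + \sum_{j\in\mathcal{V}} u_{ji}$ depends only on the parameters $d_i$ and $\{u_{ji}\}_{j\in\mathcal{V}}$, and that for distinct $i_1,i_2\in V$ these parameter collections are disjoint (the $u$-parameters carry distinct second indices). Since each $\mathcal{D}_i$ and $\mathcal{U}_{ji}$ is a convex interval and $f$ is affine in its arguments, the $\operatorname{conv}$ in \Cref{e:s:2} is redundant, and the projection of $\mathcal{F}(t,x)$ onto the coordinates of $V$ factorizes as $\mathcal{F}_V(t,x) = \prod_{i\in V}\mathcal{F}_i(t,x)$. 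Consequently $y\in\mathcal{F}_V(t,x)$ holds if and only if $y_i\in\mathcal{F}_i(t,x)$ for every $i\in V$.

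Next I would compute each scalar set $\mathcal{F}_i(t,x)$ under the fixed disturbance $d_i\equiv d'_i$. For $i\in V\subset S_M(x)$, every $j$ with $x_j<M(x)$ satisfies $\operatorname{sign}(x_j-x_i)=-1$, contributing the singleton $\{-w_{ji}\}$ (exactly as in the proof of \Cref{c f F subset S}), whereas every $j$ with $x_j=M(x)$ contributes the full interval $[-w_{ji},w_{ji}]$. Summing, $\mathcal{F}_i(t,x)$ is the interval whose lower endpoint is $d'_i-\sum_{j\in\mathcal{V}}w_{ji}=d'_i+\alpha^V_i$, independent of which agents attain the maximum, and whose upper endpoint is minimized, over all admissible $x$, when $S_M(x)=V$, yielding $d'_i+\beta^V_i$. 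Hence the intersection of the sets $\mathcal{F}_i(t,x)$ taken over all $x$ with $V\subset S_M(x)$ is precisely $[\,d'_i+\alpha^V_i,\; d'_i+\beta^V_i\,]$.

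Finally I would choose $d'_i$. Since $y\in S(V)$ means $\alpha^V_i\le y_i\le\beta^V_i$, the requirement $y_i\in[\,d'_i+\alpha^V_i,\;d'_i+\beta^V_i\,]$ is equivalent to $y_i-\beta^V_i\le d'_i\le y_i-\alpha^V_i$, an interval that always contains $0$; thus $d'_i=0$, which trivially lies in $[-w_{ii},w_{ii}]$, works, and the vector $d'_V$ so defined satisfies the claim. The case $V\subset S_m(x)$ is symmetric: the roles of the two endpoints swap and the same argument yields $-y\in\mathcal{F}_V(t,x)$. The main obstacle I anticipate is the factorization step, namely making precise that the independence of the parameters across the coordinates of $V$ survives both the convex-hull operation and the coordinate projection, so that membership genuinely decouples coordinatewise; once this is secured, the remainder is elementary interval arithmetic.
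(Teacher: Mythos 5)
Your overall architecture (coordinatewise decoupling of $\mathcal{F}_V$ into intervals, then interval arithmetic to pick $d'_i$) is sound and is essentially a scalarized version of what the paper does with its Minkowski-sum decomposition of hyperrectangles (\Cref{l 2 in proof of t:m:3}). However, there is a genuine error in your second step: the self-loop term. You claim that every $j$ with $x_j = M(x)$ contributes the full interval $[-w_{ji},w_{ji}]$, but for $j=i$ the function $u_{ii}(x) = w_{ii}\operatorname{sign}(x_i - x_i)$ is identically zero, so by \Cref{e:s:2} $\mathcal{U}_{ii}(x) = \{0\}$, not $[-w_{ii},w_{ii}]$. The $\pm w_{ii}$ slack appearing in $\alpha^V_i$ and $\beta^V_i$ is, by construction, the disturbance budget and cannot be supplied by the sign terms. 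With this correction, the interval guaranteed to lie in $\mathcal{F}_i(t,x)$ for all $x$ with $V\subset S_M(x)$ is $[\,d'_i+\alpha^V_i+w_{ii},\; d'_i+\beta^V_i-w_{ii}\,]$, so the admissible choices of $d'_i$ form the interval $[\,y_i-\beta^V_i+w_{ii},\; y_i-\alpha^V_i-w_{ii}\,]$, which need \emph{not} contain $0$. Concretely, take $V=\{i\}$ with $w_{ji}=0$ for $j\ne i$ and $w_{ii}=1$: then $S(V)=[-1,1]$, and for $y=1$ the only admissible choice is $d'_i=1$; your choice $d'_i=0$ gives $\mathcal{F}_i(t,x)=\{0\}\not\ni 1$. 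Indeed, if $d'_i=0$ always worked the lemma would be vacuous as a statement about disturbances.

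The fix is exactly the paper's route: one must verify that $[\,y_i-\beta^V_i+w_{ii},\; y_i-\alpha^V_i-w_{ii}\,]\cap[-w_{ii},w_{ii}]\neq\emptyset$, which holds because $\beta^V_i-\alpha^V_i = 2\sum_{j\in V}w_{ji} \ge 2w_{ii}$ (as $i\in V$) makes the first interval nonempty, and $\alpha^V_i\le y_i\le\beta^V_i$ places its endpoints on the correct sides of $\mp w_{ii}$. Equivalently, split $\mathcal{W}=\mathcal{W}'+\operatorname{diag}(w_{11},\dots,w_{nn})$ and decompose $y=y'+d'_V$ with $y'$ in the sign-term hyperrectangle and $d'_V$ in the disturbance box, as the paper does via \Cref{l 2 in proof of t:m:3}. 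Your factorization step and the monotonicity of the upper endpoint in $S_M(x)$ are otherwise fine; only the accounting of where the $w_{ii}$ range comes from, and hence the final choice of $d'_i$, needs repair.
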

\begin{proof}
    We prove the case of $S_M$; the case of $S_m$ is similar.

    Let $w^{diag} = [w_{11},w_{22},\dots,w_{nn}]^T$ and $\mathcal{W}' := \mathcal{W}-w^{diag}$ then 
    \begin{equation}
        \alpha^V = -(\mathcal{W}'^T\pmb{1})_V - w^{diag}_V, \qquad
        \beta^V = \left(\mathcal{W}'^T\phi(V)\right)_V + w^{diag}_V.
    \end{equation}
    
    Since 
    \begin{equation}
    \begin{aligned}
        -(\mathcal{W}'^T\pmb{1})_V \preceq \left(\mathcal{W}'^T\phi(V)\right)_V,\qquad
    -w^{diag}_V \preceq w^{diag}_V,
    \end{aligned}
    \end{equation}
    it follows from \Cref{l 2 in proof of t:m:3} that for any $y \in S(V)$, there exist $y',d'_V \in \mathbb{R}^{\operatorname{card}(V)}$ satisfying
    \begin{equation}
            -(\mathcal{W}'^T\pmb{1})_V \preceq y' \preceq \left(\mathcal{W}'^T\phi(V)\right)_V,\qquad
    -w^{diag}_V \preceq d'_V \preceq w^{diag}_V,
    \end{equation}
    such that $y = y' + d'_V$.

    Let $d_V(t,x) \equiv d'_V$. 

    Since $x_i > x_j$ for all $i \in S_M(x)$ and $j \notin S_M(x)$, and $\operatorname{card}(\mathcal{V})$ is finite, there exists $\epsilon > 0$ such that $x_i - \epsilon > x_j$ for all such pairs. Consequently, for all $i \in S_M(x)$ and $j \notin S_M(x)$, we have $\mathcal{U}_{ji}(x) = \{-w_{ji}\}$. And for $i,j \in S_M(x)$ we have $\mathcal{U}_{ji}(x) = [-w_{ji},w_{ji}]$. Therefore, 
    \begin{equation}
        \mathcal{F}_V(t,x) = d'_V+\bigl\{y\in\mathbb{R}^{\operatorname{card}(V)}\mid-(\mathcal{W}'^T\pmb{1})_V\preceq y\preceq\bigl(\mathcal{W}'^T\phi(S_M(x))\bigr)_V\bigr\}
    \end{equation}
    for all $x$ satisfying $V \subset S_M(x)$. Since 
    \begin{equation}
        \left(\mathcal{W}'^T\phi(V)\right)_V\preceq \left(\mathcal{W}'^T\phi(S_M(x))\right)_V,
    \end{equation}
    we have $-(\mathcal{W}'^T\pmb{1})_V\preceq y'\preceq\bigl(\mathcal{W}'^T\phi(S_M(x))\bigr)_V$ and $y \in \mathcal{F}_V(t, x)$ for all $x$ satisfying $V \subset S_M(x)$. 
\end{proof}

We now give the proof of \Cref{t:m:3}. By \Cref{t:m:2}, for any solution $x:\mathbb{R}\to \mathbb{R}^n$ of \Cref{e:c:3}, the derivatives $\frac{\mathrm{d}}{\mathrm{d}t}M(x(t))$ and $\frac{\mathrm{d}}{\mathrm{d}t}m(x(t))$ exist and satisfy \Cref{e:m:1} a.e. on $t\in \mathbb{R}$. Then by \Cref{c f F subset S},
\begin{equation}
\begin{aligned}
    \frac{\mathrm{d}}{\mathrm{d}t}M(x(t)) & \le  \sup\left( \operatorname{span}(\pmb{1}) \cap \mathcal{F}_{S_M}(t,x(t))\right)_1 \\
    & \le  \sup\left( \operatorname{span}(\pmb{1}) \cap S(S_M(x(t)))\right)_1 = \Q(S_M(x(t))),
\end{aligned}
\end{equation}
a.e. on $t\in \mathbb{R}$. The same is for $-\frac{\mathrm{d}}{\mathrm{d}t}m(x(t)) \le \Q(S_m(x(t)))$ and,
\begin{equation}
    \begin{aligned}
        \frac{\mathrm{d}}{\mathrm{d}t}\D{x(t)} & = \frac{\mathrm{d}}{\mathrm{d}t}M(x(t)) - \frac{\mathrm{d}}{\mathrm{d}t}m(x(t)) \le \Q(S_M(x(t)))+\Q(S_m(x(t))) \le \A(\mathcal{W}).
    \end{aligned}
\end{equation}
a.e. on $t\in \{t\in \mathbb{R}_{\ge 0}\mid \D{x(t)}>0\}$.

Therefore by \Cref{l in proof t:m:3}, for all $t^0\in \mathbb{R}$ 
\begin{equation}
    \D{x(t)} \le \max\Big\{0,\D{x(t^0)}+\A(\mathcal{W})t\Big\} \quad \forall t \ge t^0.
\end{equation}

We now show that for every $r\in \mathbb{R}_{\ge 0}$, there exists a solution $x'(\cdot)$ of \Cref{e:c:3} with $d'(\cdot,\cdot)$ satisfying \Cref{d satisfies} and $\D{x'(0)} = r$ such that $x'(\cdot)$ satisfies \Cref{eq 2 in t:m:3}. We prove the case $r = 2$; other cases follow similarly.

For $t \in \mathbb{R}$, if $\D{x'(0)} + \A(\mathcal{W})t < 0$, then by the previous analysis, we must have
\begin{equation}
    \D{x'(t)} = 0 = \max\Big\{0,\D{x'(0)}+\A(\mathcal{W})t\Big\}.
\end{equation}

Therefore, we only need to consider the case $t \in T$ where
\begin{equation}
    T := \big\{t \in \mathbb{R}_{\ge 0} \mid \D{x'(0)}+\A(\mathcal{W})t\ge 0\big\}.
\end{equation}

Suppose that disjoint nonempty subsets $V_1, V_2 \subset \mathcal{V}$ satisfy $\A(\mathcal{W}) = \Q(V_1) + \Q(V_2)$, and let $V_3 = (V_1 \cup V_2)^c$. Consider the initial condition
\begin{equation}\label{eq 3 in proof of t:m:3}
    x'_{V_1}(0) = \pmb{1}, \quad
    x'_{V_2}(0) = -\pmb{1}, \quad
    x'_{V_3}(0) = \pmb{0}.
\end{equation}

Since $\Q(V)\pmb{1} \in S(V)$ for all nonempty $V \subset \mathcal{V}$, it follows from \Cref{l 3 in proof of t:m:3} that there exists a function $d'(\cdot,\cdot)$ satisfying \Cref{d satisfies}, such that
\begin{equation}
    \Q(V_1)\pmb{1} \in \mathcal{F}_{V_1}(t, x) \quad \text{and} \quad -\Q(V_2)\pmb{1} \in \mathcal{F}_{V_2}(t, x)
\end{equation}
for any $x$ satisfying $V_1 \subset S_M(x)$ and $V_2 \subset S_m(x)$.

Therefore if
\begin{equation}
    -1-\Q(V_2)t \le x'_j(t) \le 1+\Q(V_1)t, \quad \forall t\in T,j\in V_3,
\end{equation}
then 
\begin{equation}
    x'_j(t) =\begin{cases}
        1+\Q(V_1)t & j\in V_1,\\
        -1-\Q(V_2)t & j\in V_2,
    \end{cases} 
\end{equation} satisfies the differential inclusion $\dot{x}\in \mathcal{F}(t,x)$ with $d'(\cdot,\cdot)$ for $t\in T$.

Consider the subgraph $\mathcal{G}_3 = (V_3, \mathcal{E}_3, \mathcal{W}_3)$ induced by $V_3$, and $d'':\mathbb{R}\times \mathbb{R}^{\operatorname{card}(V_3)}\to \mathbb{R}^{\operatorname{card}(V_3)}$ defined as 
\begin{equation}
    d''_j(t,x) = d'_j(t,x)+\begin{cases}
        -\sum_{i\in V_1\cup V_2} w_{ij} & 1+\Q(V_1)t < x_j,\\
        \sum_{i\in V_1}w_{ij}-\sum_{i\in V_2} w_{ij} & -1-\Q(V_2)t \le x_j \le 1+\Q(V_1)t,\\
        \sum_{i\in V_1\cup V_2} w_{ij} & x_j < -1-\Q(V_2)t,
    \end{cases}
\end{equation}
for all $j \in  V_3$.

This subsystem essentially describes the behavior of agents in $V_3$ when the agents in $V_1$ and $V_2$ remain on $1 + \Q(V_1)t$ and $-1 - \Q(V_2)t$, respectively. The function $d''(\cdot,\cdot)$ includes the influence of agents in $V_1$ and $V_2$ on those in $V_3$.

By \Cref{t:m:1}, the subsystem admits a solution $x''(\cdot)$. We now show that every solution $x''(\cdot)$ of the subsystem \Cref{e:c:3} of $\mathcal{G}_3$ with $d''(\cdot, \cdot)$ satisfies
\begin{equation}\label{eq for subsys in proof of t:m:3}
    -1-\Q(V_2)t \le x''_j(t) \le 1+\Q(V_1)t, \quad \forall t\in T,j\in V_3.
\end{equation}

We prove that \Cref{eq for subsys in proof of t:m:3} holds by contradiction. Suppose that there exists some time $\tau$ such that \Cref{eq for subsys in proof of t:m:3} fails. We consider the case where some node in $V_3$ exceeds the upper bound, since the case of falling below the lower bound is analogous. That is
\begin{equation}
   M(x''(\tau))  > 1 + \Q(V_1)\tau.
\end{equation}

Define
\begin{equation}
    \tau' := \sup\big\{t \in [0, \tau] \mid M(x''(t)) \le 1+\Q(V_1)t\big\},
\end{equation}
by continuity of $M(\cdot)$ and $x''(\cdot)$, $M(x''(\tau')) = 1 + \Q(V_1)\tau'$. 

Then there exist a subset $T' \subset [\tau', \tau]$ of nonzero measure where $\frac{\mathrm{d}}{\mathrm{d}t}M(x''(t)) > \Q(V_1)$ for all $t \in T'$. There must exists a subset $V \subset V_3$ such that $\Q(V) > \Q(V_1)$, which implies $\Q(V) + \Q(V_2) > \Q(V_1) + \Q(V_2) = \A(\mathcal{W})$, contradicting the definition of $\A(\mathcal{W})$.

Therefore, 
\begin{equation}
    x'_j(t) =\begin{cases}
        1+\Q(V_1)t & j\in V_1,t\in T,\\
        -1-\Q(V_2)t & j\in V_2 ,t\in T,\\
        x''_j(t) & j\in V_3,t\in T,\\
        \frac{\Q(V_2)-\Q(V_1)}{\Q(V_1)+\Q(V_2)} & j\in\mathcal{V}, t\not\in T,
    \end{cases}
\end{equation}
is a solution of the system and 
\begin{equation}
    \D{x'(t)} = \max\Big\{0,\D{x'(0)}+\A(\mathcal{W})t\Big\},
\end{equation}
for all $t\in\mathbb{R}_{\ge 0}$.

\section{Proof of \Cref{t:m:5}}\label{Proof of t:m:5}

We will need the following Lemmas for the proof of \Cref{t:m:5}.

Define $\Qu,\Ql: \mathcal{P}(\mathcal{V})\setminus\{\emptyset\}\to \mathbb{R}$ as 
\begin{equation}\label{def of Ql}
    \Qu(V) := \min_{i\in V}\beta^V_i,\qquad \Ql(V) := \max_{i\in V}\alpha^V_i.
\end{equation}

The definition of $\Qu$ is consistent with \Cref{eq in t:m:5}, since by \Cref{def of alpha beta} we have $\beta^V_i = \sum_{j\in V} w_{ji} -\sum_{j\in \mathcal{V}\backslash V} w_{ji}$

\begin{Lemma}\label{l f Q least than Qu}
For all nonempty subsets \( V \subset \mathcal{V} \), the inequality \( \Q(V) \leq \Qu(V) \) holds.
\end{Lemma}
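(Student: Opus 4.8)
The plan is to reduce the seminorm/geometric quantity $\Q(V)$ directly to the arithmetic quantity $\Qu(V)$ by exploiting the fact that every element of $\operatorname{span}(\pmb{1})$ has all of its coordinates equal. Concretely, an arbitrary point of $\operatorname{span}(\pmb{1})\cap S(V)$ can be written as $c\pmb{1}$ for some scalar $c\in\mathbb{R}$, and its first coordinate is exactly $c$. So the entire computation of $\Q(V)$ collapses to determining which scalars $c$ are admissible and taking the supremum over them.

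First I would translate the membership $c\pmb{1}\in S(V)$ into scalar inequalities. By the definition $S(V)=\{y\mid \alpha^V\preceq y\preceq\beta^V\}$, we have $c\pmb{1}\in S(V)$ if and only if $\alpha^V_i\le c\le\beta^V_i$ for every $i\in V$. The upper half of these constraints forces $c\le\beta^V_i$ simultaneously for all $i\in V$, hence $c\le\min_{i\in V}\beta^V_i=\Qu(V)$. Consequently the first-coordinate projection satisfies
\begin{equation}
    \bigl(\operatorname{span}(\pmb{1})\cap S(V)\bigr)_1\subset(-\infty,\,\Qu(V)].
\end{equation}
Taking the supremum of both sides and recalling $\Q(V)=\sup\bigl(\operatorname{span}(\pmb{1})\cap S(V)\bigr)_1$ immediately gives $\Q(V)\le\Qu(V)$ in the case where the intersection is nonempty.

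Finally I would dispose of the degenerate case. The intersection $\operatorname{span}(\pmb{1})\cap S(V)$ is empty precisely when no scalar satisfies $\max_{i\in V}\alpha^V_i\le c\le\min_{i\in V}\beta^V_i$, i.e.\ when $\Ql(V)>\Qu(V)$. In that situation the convention $\sup\emptyset=-\infty$ yields $\Q(V)=-\infty\le\Qu(V)$, so the inequality holds trivially. Combining the two cases completes the argument.

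I do not anticipate any genuine obstacle here: the result is a direct unpacking of the two definitions, and the only point requiring care is ensuring the empty-intersection case is handled cleanly via the $\sup\emptyset=-\infty$ convention rather than being silently assumed nonempty. (This also foreshadows the companion fact, stated later, that the inequality is an equality exactly when $\Q(V)\neq-\infty$, which corresponds to $\Ql(V)\le\Qu(V)$.)
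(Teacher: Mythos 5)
Your proof is correct and is essentially the paper's argument run forward instead of by contradiction: both reduce membership of $c\pmb{1}$ in $S(V)$ to the scalar constraints $c\le\beta^V_i$ and compare against the minimizing index defining $\Qu(V)$. Your direct version has the minor advantage of not relying on the supremum being attained (the paper asserts $\Q(V)\pmb{1}\in S(V)$, which implicitly uses closedness of $S(V)$) and of handling the empty-intersection case explicitly via the $\sup\emptyset=-\infty$ convention.
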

\begin{proof}
Assume, for contradiction, that \( \Q(V) > \Qu(V) \).  By definition, \( \Q(V)\pmb{1} \in S(V) \) and $\Q(V)\pmb{1} \preceq \beta^V$. Let \( i \in V \) be one of the indices where \( \beta^V_i = \Qu(V) \). Then, \( (\Q(V)\pmb{1})_i = \Q(V) > \Qu(V) = \beta^V_i \), which contradicts \( \Q(V)\pmb{1} \preceq \beta^V \), thereby proving the lemma.
\end{proof}

\begin{Lemma}\label{l f Q inf iif Ql Qu}
    For any nonempty subset \( V \subset \mathcal{V} \), the following are equivalent:
    \begin{equation}
        \Ql(V) \le \Qu(V) 
        \;\Longleftrightarrow\;
        \Q(V) = \Qu(V) 
        \;\Longleftrightarrow\;
        \Q(V) \ne -\infty.
    \end{equation}
\end{Lemma}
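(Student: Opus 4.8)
The plan is to compute the set $\operatorname{span}(\pmb{1}) \cap S(V)$ in closed form; once this is done, all three equivalences fall out by inspection. First I would observe that any vector in $\operatorname{span}(\pmb{1})$ has the form $c\pmb{1}$ for some $c \in \mathbb{R}$, and that $c\pmb{1} \in S(V)$ holds precisely when $\alpha^V_i \le c \le \beta^V_i$ for every $i \in V$. Intersecting these interval constraints over $i \in V$ gives $\max_{i\in V}\alpha^V_i \le c \le \min_{i\in V}\beta^V_i$, i.e.\ $\Ql(V) \le c \le \Qu(V)$. Hence
\begin{equation}
    \operatorname{span}(\pmb{1}) \cap S(V) = \{c\pmb{1} \mid \Ql(V) \le c \le \Qu(V)\}.
\end{equation}
Since every entry of $c\pmb{1}$ equals $c$, extracting the indicated coordinate yields $\left(\operatorname{span}(\pmb{1}) \cap S(V)\right)_1 = \{c \mid \Ql(V)\le c\le\Qu(V)\}$, the (possibly empty) interval $[\Ql(V), \Qu(V)]$.

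From this description the three conditions are immediate. If $\Ql(V) \le \Qu(V)$, the interval is nonempty and its supremum is its right endpoint, so $\Q(V) = \Qu(V)$; because $\Qu(V)$ is a minimum over the finite set $V$ of the finite quantities $\beta^V_i$, it is a real number, whence $\Q(V) \ne -\infty$. Conversely, if $\Ql(V) > \Qu(V)$ the interval is empty, so by the convention $\sup\emptyset = -\infty$ we get $\Q(V) = -\infty$, and in particular $\Q(V) \ne \Qu(V)$. This establishes both $\Ql(V)\le\Qu(V) \Leftrightarrow \Q(V) = \Qu(V)$ and $\Ql(V)\le\Qu(V)\Leftrightarrow \Q(V)\ne-\infty$, which together give the full chain of equivalences.

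Finally I would remark that $S(V)$ itself is always nonempty, since $\alpha^V \preceq \beta^V$ componentwise (as $-\sum_{j\in V}w_{ji} \le \sum_{j\in V}w_{ji}$ under the nonnegativity of the weights); thus the intersection with $\operatorname{span}(\pmb{1})$ can fail to be nonempty only through a genuine mismatch $\Ql(V) > \Qu(V)$ between the tightest lower and upper endpoints, which is exactly the geometric picture described after \Cref{t:m:5}. I do not anticipate any substantive obstacle: the argument is a direct computation, and the only point requiring care is correctly tracking the $\sup\emptyset = -\infty$ convention so that the equivalence with $\Q(V) \ne -\infty$ is stated precisely.
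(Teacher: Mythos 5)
Your proof is correct and follows essentially the same route as the paper: both rest on the observation that $c\pmb{1}\in S(V)$ exactly when $\Ql(V)\le c\le \Qu(V)$, so that $\left(\operatorname{span}(\pmb{1})\cap S(V)\right)_1$ is the (possibly empty) interval $[\Ql(V),\Qu(V)]$. The only organizational difference is that you read all three equivalences off this closed form at once, whereas the paper argues via a cyclic chain of implications and invokes the separate bound $\Q(V)\le\Qu(V)$ from \Cref{l f Q least than Qu}; your version absorbs that bound into the interval description.
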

\begin{proof}
    Suppose \( \Ql(V) \le \Qu(V) \). Then
    \begin{equation}
        \alpha^V \preceq \Ql(V)\pmb{1} \preceq \Qu(V)\pmb{1} \preceq \beta^V,
    \end{equation}
    which implies \( \Qu(V)\pmb{1} \in \operatorname{span}(\pmb{1}) \cap S(V) \), and \( \Q(V) \ge \Qu(V) \). Combining with $\Q(V) \le \Qu(V)$ from \Cref{l f Q least than Qu} gives $\Q(V) = \Qu(V)$.
    
    Since \( \Qu(V) \in \mathbb{R} \), the implication \( \Q(V) = \Qu(V) \Rightarrow \Q(V) \ne -\infty \) is immediate.
    
    Suppose \( \Q(V) \ne -\infty \). Then there exists some scalar \( a \in \mathbb{R} \) such that \( a\pmb{1} \in S(V) \), i.e., 
    \begin{equation}
        \alpha^V_i \le a \quad \forall i \in V,\qquad a \le \beta^V_i \quad \forall i \in V.
    \end{equation}
    
    This implies \( \Ql(V) \le a \le \Qu(V) \), and hence \( \Ql(V) \le \Qu(V) \).
    
    Combining all parts, the three statements are equivalent.
\end{proof}

\begin{Lemma}\label{l f QuV1QuV2 QV1QV2}
    For any nonempty subset \( V\subset \mathcal{V} \), there exists a nonempty subset \( V' \subseteq V \) such that \( \Q(V') \geq \Qu(V) \).
\end{Lemma}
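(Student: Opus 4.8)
The plan is to dispose of the statement by a direct case split on whether $V$ is itself ``feasible'' in the sense of \Cref{l f Q inf iif Ql Qu}, that is, on whether $\Ql(V)\le\Qu(V)$ or $\Ql(V)>\Qu(V)$. In each case I will exhibit an explicit witness $V'$, so no induction or peeling is needed.

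In the easy case $\Ql(V)\le\Qu(V)$, I would simply invoke \Cref{l f Q inf iif Ql Qu}, which gives $\Q(V)=\Qu(V)$. Then the choice $V'=V$ already satisfies $\Q(V')=\Qu(V)\ge\Qu(V)$, and there is nothing further to check.

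The substantive case is $\Ql(V)>\Qu(V)$, where \Cref{l f Q inf iif Ql Qu} tells us $\Q(V)=-\infty$, so $V$ itself is useless and we must descend to a proper subset. The key point I want to stress is that one should \emph{not} try to retain a large subset: deleting any node $k$ lowers every surviving $\beta^{V}_i$ by $2w_{ki}$, so $\Qu$ is not monotone under deletion and an inductive ``remove a bad node'' strategy cannot keep $\Qu$ bounded below by $\Qu(V)$. Instead I would take the single worst node for the lower Autonomy bound, namely a node $i^\ast\in V$ attaining $\alpha^V_{i^\ast}=\Ql(V)=\max_{i\in V}\alpha^V_i$, and set $V'=\{i^\ast\}$. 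It then remains to evaluate $\Q$ on a singleton: since $\operatorname{span}(\pmb{1})\cap S(\{i^\ast\})=S(\{i^\ast\})=[\alpha^{\{i^\ast\}}_{i^\ast},\beta^{\{i^\ast\}}_{i^\ast}]$ in $\mathbb{R}^{1}$, we get $\Q(\{i^\ast\})=\beta^{\{i^\ast\}}_{i^\ast}$. Using that $\alpha^{\{i^\ast\}}_{i^\ast}=\alpha^V_{i^\ast}=-\sum_{j}w_{ji^\ast}$ is independent of the containing set and that $\beta^{\{i^\ast\}}_{i^\ast}-\alpha^V_{i^\ast}=2w_{i^\ast i^\ast}\ge 0$, I obtain $\Q(\{i^\ast\})=\beta^{\{i^\ast\}}_{i^\ast}=\alpha^V_{i^\ast}+2w_{i^\ast i^\ast}\ge\alpha^V_{i^\ast}=\Ql(V)>\Qu(V)$, which is in fact a strict inequality. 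This closes the case and hence the lemma.

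The only genuine obstacle here is conceptual rather than computational: recognizing that the tempting ``shrink $V$ while keeping $\Qu$ large'' route is doomed, because node deletion strictly decreases the $\beta^V_i$, and that the correct witness is the singleton of the maximal-$\alpha$ node, whose attainable block velocity is the upper endpoint $\beta^{\{i^\ast\}}_{i^\ast}$ of its own interval and therefore automatically dominates $\Ql(V)$, hence $\Qu(V)$. Once this witness is identified, every remaining step is a one-line computation from the definitions of $\alpha^V$ and $\beta^V$ together with \Cref{l f Q inf iif Ql Qu}.
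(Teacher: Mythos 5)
Your proposal is correct and follows essentially the same route as the paper's own proof: both split on whether $\Q(V)=-\infty$ via \Cref{l f Q inf iif Ql Qu}, take $V'=V$ in the easy case, and otherwise take $V'$ to be the singleton of a node attaining $\Ql(V)=\max_{i\in V}\alpha^V_i$, concluding from $\Q(\{i^\ast\})=\beta^{\{i^\ast\}}_{i^\ast}\ge\alpha^V_{i^\ast}=\Ql(V)>\Qu(V)$. The computations check out, so nothing further is needed.
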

\begin{proof}
    If $\Q(V) \not= -\infty$, let $V' = V$ and by \Cref{l f Q inf iif Ql Qu} the lemma holds in this case.
    
    If $\Q(V) = -\infty$, then by \Cref{l f Q inf iif Ql Qu}, $\Ql(V) > \Qu(V)$. let $i\in V$ be one of the indices where $-\sum_{j\in \mathcal{V}} w_{ji} = \Ql(V)$, and let $V' = \{i\}$. Then 
    \begin{equation}
    \begin{aligned}
        \Q(V') &= \sup\left( \operatorname{span}(\pmb{1}) \cap S(V')\right)_1 = \operatorname{sup}S(V') = \sum_{j\in V'} w_{ji} -\sum_{j\in \mathcal{V}\backslash V'} w_{ji} \\
        &\ge -\sum_{j\in \mathcal{V}} w_{ji} = \Ql(V) > \Qu(V).
    \end{aligned}
    \end{equation}
\end{proof}

With the above lemmas, we now prove \Cref{t:m:5}. By \Cref{l f Q least than Qu}, we have
\begin{equation}
    \max_{\substack{V_1,V_2\subset\mathcal{V}\\ 
    V_1,V_2\neq\emptyset,\ V_1\cap V_2=\emptyset}} 
    \bigl[\Q(V_1)+\Q(V_2)\bigr] 
    \le\quad\
\max_{\substack{V_1,V_2\subset\mathcal{V}\\ 
    V_1,V_2\neq\emptyset,\ V_1\cap V_2=\emptyset}} 
    \bigl[\Qu(V_1)+\Qu(V_2)\bigr],
\end{equation}
while \Cref{l f QuV1QuV2 QV1QV2} establishes the reverse inequality.
Therefore,
\begin{equation}
    \A(\mathcal{W}) = \max_{\substack{V_1,V_2\subset\mathcal{V}\\V_1,V_2\neq\emptyset,\ V_1\cap V_2=\emptyset}}[\Qu(V_1)+\Qu(V_2)].
\end{equation}

\section{Proof of \Cref{t:m:6}}\label{Proof of t:m:6}

We will need the following Lemma for the proof of \Cref{t:m:6}. Some notations are defined in \Cref{Proof of t:m:5}.

\begin{Lemma}\label{l f i in V QuV le Quv}
    Let $V \subset \mathcal{V}$ be a nonempty subset, and let $i \in V$ be one of the indices where  $\beta^V_i = \Qu(V)$. Then for any subset $V' \subset V$ with $i \in V'$, $\Qu(V') \le \Qu(V)$ holds.
\end{Lemma}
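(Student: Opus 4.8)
The plan is to reduce the set-level inequality to a single-index comparison at the distinguished node $i$. Since $\Qu(V') = \min_{k\in V'}\beta^{V'}_k$ and $i\in V'$, we immediately obtain $\Qu(V')\le \beta^{V'}_i$. By hypothesis $\beta^V_i = \Qu(V)$, so it suffices to establish the pointwise bound $\beta^{V'}_i \le \beta^V_i$; the target inequality then follows by chaining these two facts.

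To prove this bound, I would expand both quantities using the definition $\beta^V_i = \sum_{j\in V} w_{ji} - \sum_{j\in\mathcal{V}\setminus V} w_{ji}$ and exploit the containment $V'\subseteq V$ to partition $\mathcal{V}$ into the three disjoint blocks $V'$, $V\setminus V'$, and $\mathcal{V}\setminus V$. Rewriting both $\beta^V_i$ and $\beta^{V'}_i$ over this common partition, the internal contribution from $V'$ and the external contribution from $\mathcal{V}\setminus V$ are identical in the two expressions, while the block $V\setminus V'$ is counted positively in $\beta^V_i$ but negatively in $\beta^{V'}_i$. Subtracting gives
\[
\beta^V_i - \beta^{V'}_i = 2\sum_{j\in V\setminus V'} w_{ji}.
\]
The conceptual content is precisely this sign-flip accounting: shrinking $V$ to $V'$ moves each node of $V\setminus V'$ out of $i$'s internal neighborhood and into its external neighborhood, so each such weight switches sign in $\beta_i$ and therefore enters the difference with a factor of two. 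Since every $w_{ji}\ge 0$, the right-hand side is nonnegative, yielding $\beta^{V'}_i\le \beta^V_i$.

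Combining the steps gives $\Qu(V')\le \beta^{V'}_i \le \beta^V_i = \Qu(V)$, and the case $V'=V$ is trivial. I do not expect any genuine obstacle: the argument is a direct algebraic manipulation resting on the nonnegativity of the weights. The one point worth emphasizing is that using the \emph{specific} minimizing index $i$ (rather than attempting to compare the full minima over $V$ and $V'$) is what makes the proof clean, since it localizes the comparison to a place where the weight bookkeeping is manifestly nonnegative.
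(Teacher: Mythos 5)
Your proposal is correct and follows essentially the same route as the paper's proof: bound $\Qu(V')$ by $\beta^{V'}_i$ at the distinguished minimizing index $i$, then observe that $\beta^V_i - \beta^{V'}_i = 2\sum_{j\in V\setminus V'} w_{ji} \ge 0$ by nonnegativity of the weights. No gaps.
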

\begin{proof}
    \begin{equation}
        \begin{aligned}
            \Qu(V') &\le \sum_{j\in V'} w_{ji} -\sum_{j\in \mathcal{V}\backslash V'} w_{ji} \le \sum_{j\in V'} w_{ji} -\sum_{j\in \mathcal{V}\backslash V'} w_{ji} + 2\sum_{j\in V\backslash V'} w_{ji}\\
            &= \sum_{j\in V} w_{ji} -\sum_{j\in \mathcal{V}\backslash V} w_{ji} =  \Qu(V),
        \end{aligned}
    \end{equation}
    and the lemma is proved.
\end{proof}

We now give the proof of \Cref{t:m:6}. Since $\A(\mathcal{W}) \ge z^*$ is straightforward, we aim to show that the equality holds if $z^* \le 0$.

We proceed by mathematical induction. We prove that for all \( m \in \{0, \dots, n-2\} \), if for every pair of nonempty disjoint subsets \( V, V' \subset \mathcal{V} \) with \( \operatorname{card}(V) + \operatorname{card}(V') = n + 1 - m \), the inequality \( \Qu(V) + \Qu(V') \le z^* \) holds, then for every pair of nonempty disjoint subsets \( V, V' \subset \mathcal{V} \) with \( \operatorname{card}(V) + \operatorname{card}(V') = n - m \), the inequality \( \Qu(V) + \Qu(V') \le z^* \) also holds.

For the base case \( m = 0 \), the condition \( \operatorname{card}(V) + \operatorname{card}(V') = n \) implies that \( V \) and \( V' \) are complementary subsets. By the given condition, we have \( \Qu(V) + \Qu(V') \le z^* \).

Now, assume \( m \in \{1, \dots, n-2\} \), and the inductive hypothesis holds. Let \( V, V' \subset \mathcal{V} \) be a pair of nonempty disjoint subsets with \( \operatorname{card}(V) + \operatorname{card}(V') = n - m \). By the given condition, we have \( \Qu(V) + \Qu(V^c) \le z^* \). Let \( i \in V^c \) be one of the indices where \( \beta^{V^c}_i = \Qu(V^c) \).  

When \( i \in V' \), by \Cref{l f i in V QuV le Quv}, \( \Qu(V') \leq \Qu(V^c) \), and thus
\begin{equation}
    \Qu(V) + \Qu(V') \le \Qu(V) + \Qu(V^c) \le z^*
\end{equation} 
holds.

When \( i \notin V' \), by the inductive hypothesis, \( \Qu(\{i\}\cup V) + \Qu(V') \le z^* \) and \( \Qu(V) + \Qu(\{i\} \cup V') \le z^* \).

If $\Qu(\{i\}\cup V) \not= \beta^{\{i\}\cup V}_i$, then there exists some $j\in V$ such that
\begin{equation}
    \Qu(\{i\}\cup V) = \beta^{\{i\}\cup V}_j.
\end{equation}

By \Cref{l f i in V QuV le Quv}, we have $\Qu(V) \le \Qu(\{i\}\cup V)$, and hence 
\begin{equation}
    \Qu(V) + \Qu(V') \le \Qu(\{i\}\cup V) + \Qu(V') \le z^*
\end{equation} 
holds. 

If $\Qu(\{i\}\cup V) = \beta^{\{i\}\cup V}_i$, then
\begin{equation}
    \Qu(\{i\}\cup V) = \sum_{j\in \{i\}\cup V} w_{ji} - \sum_{j\in V^c\setminus \{i\}} w_{ji}.
\end{equation}

Since
\begin{equation}
    \Qu(V^c) = \beta^{V^c}_i = \sum_{j\in V^c} w_{ji} - \sum_{j\in V} w_{ji},
\end{equation}
it follows that 
\begin{equation}
    \begin{aligned}
    &\Qu(V)+\Qu(V') + 2w_{ii} =\Qu(V)+\Qu(V')\\
    &+\sum_{j\in V^c} w_{ji} - \sum_{j\in V} w_{ji} + \sum_{j\in \{i\}\cup V} w_{ji} - \sum_{j\in V^c\setminus \{i\}} w_{ji}\\
    &= \Qu(V)+\Qu(V^c)+\Qu(\{i\}\cup V)+\Qu(V') \\
    &\le 2z^*.
    \end{aligned}
\end{equation}

Since $w_{ii} \ge 0$ and $z^* \le 0$, we conclude that $\Qu(V)+\Qu(V') \le z^*$ holds. Hence, if $z^* \le 0$, we have $\Qu(V)+\Qu(V') \le z^*$ for every pair of nonempty disjoint subsets $V,V'\subset \mathcal{V}$, which implies $\A(\mathcal{W}) \le z^*$ and completes the proof.

\section{Proof of \Cref{t:m:7}}\label{Proof of t:m:7}

We relax the inner minimization problem of the original problem shown in \Cref{eq int op problem} by expanding the range of values for $b$ to $\mathbb{R}^n$, obtaining the following optimization problem:
\begin{equation}\label{eq mim op problem}
\begin{aligned}
    \min_{b \in \mathbb{R}^n} \quad & a^T \mathcal{W} b \\
    \text{subject to} \quad &\pmb{1}^T b = 0, \\
                            & a^T b = 2, \\
                            & a_i b_i \ge 0, \quad \forall i \in \mathcal{V}.
\end{aligned}
\end{equation}

Since the range of $b$ has been expanded, for all $a$, the optimal value of \Cref{eq mim op problem} is less than or equal to the optimal value of the original inner minimization problem. 

Define $V^+ := \{i \in \mathcal{V} \mid a_i > 0\}$ and $V^- := \{i \in \mathcal{V} \mid a_i < 0\}$. According to the constraints, we have
\begin{equation}
\begin{aligned}
    &\sum_{i \in V^+} b_i = 1 \quad \text{and}\quad \sum_{i \in V^-} b_i = -1,\\
    &b_i \geq 0 \quad \forall i \in V^+\quad \text{and}\quad b_i \leq 0 \quad \forall i \in V^-.
\end{aligned}
\end{equation}

Then
\begin{equation}
    \begin{aligned}
        a^T\mathcal{W}b &= \sum_{i\in V^+} (a^T\mathcal{W})_ib_i - \sum_{i\in V^-} (a^T\mathcal{W})_i|b_i| \ge \min_{i\in V^+}(a^T\mathcal{W})_i - \max_{i\in V^-}(a^T\mathcal{W})_i= a^T\mathcal{W}b^*,
    \end{aligned}
\end{equation}
where $b^*$ takes the value $1$ at an index where $\min_{i \in V^+} (a^T \mathcal{W})_i$ is attained, and takes the value $-1$ at an index where $\max_{i \in V^-} (a^T \mathcal{W})_i$ is attained, with all other entries being $0$.

Since $b^*$ is within the feasible region of the original inner problem, \Cref{eq mim op problem} and the original inner problem have the same optimal solution and optimal value.

For each $a$, \Cref{eq mim op problem} is a linear programming problem, and its dual problem is given by:
\begin{equation}\label{eq inner ld dual op problem}
    \begin{aligned}
        \max_{c\in \mathbb{R}^{n+2}} \quad & 2 c_2\\
        \text{subject to} \quad &\begin{bmatrix}
                            \pmb{1} & a & \operatorname{diag}(a)
                        \end{bmatrix} c = \mathcal{W}^Ta,\\
                        & c_i \ge 0, \quad \forall i\in \{3,4,\dots,n+2\}.
    \end{aligned}
\end{equation}

For $a \in \{-1, 1\}^n$ that is neither all positive nor all negative, \Cref{eq mim op problem} has a feasible solution. Therefore, the optimal value of the dual problem \Cref{eq inner ld dual op problem} is the same as the optimal value of \Cref{eq mim op problem}. In conclusion, the optimal value of \Cref{eq int op problem} is equal to the optimal value of \Cref{eq ld dual op problem}.

\bibliographystyle{elsarticle-num}
\bibliography{references}

\end{document}